\newtheorem{thm}{Theorem}[section]
\newtheorem{prop}[thm]{Proposition}
\newtheorem{lem}[thm]{Lemma}
\theoremstyle{definition}
\newtheorem{define}[thm]{Definition}
\theoremstyle{remark}
\newtheorem{rem}[thm]{Remark}
\newtheorem{example}[thm]{Example}
\newcommand{\ve}[1]{\boldsymbol{\mathbf{#1}}}
\newcommand{\Z}{\mathbb{Z}}
\renewcommand{\d}{\partial}
\renewcommand{\subset}{\subseteq}
\renewcommand{\tilde}{\widetilde}
\newcommand{\iso}{\cong}
\DeclareMathOperator{\End}{{End}}
\DeclareMathOperator{\Ext}{{Ext}}
\DeclareMathOperator{\gr}{{gr}}
\DeclareMathOperator{\Hom}{{Hom}}
\DeclareMathOperator{\id}{{id}}
\DeclareMathOperator{\Int}{{int}}
\DeclareMathOperator{\rank}{{rank}}
\DeclareMathOperator{\Span}{{Span}}
\newcommand{\lk}{\mathrm{lk}}
\newcommand{\bF}{\mathbb{F}}
\newcommand{\bH}{\mathbb{H}}
\newcommand{\bI}{\mathbb{I}}
\newcommand{\cA}{\mathcal{A}}
\newcommand{\cB}{\mathcal{B}}
\newcommand{\cC}{\mathcal{C}}
\newcommand{\cH}{\mathcal{H}}
\newcommand{\cK}{\mathcal{K}}
\newcommand{\cR}{\mathcal{R}}
\newcommand{\cS}{\mathcal{S}}
\newcommand{\cT}{\mathcal{T}}
\newcommand{\cX}{\mathcal{X}}
\newcommand{\cY}{\mathcal{Y}}
\newcommand{\fro}{\mathfrak{o}}
\newcommand{\scA}{\mathscr{A}}
\newcommand{\cCFL}{\mathcal{C\!F\!L}}
\newcommand{\cCFK}{\mathcal{C\hspace{-.5mm}F\hspace{-.3mm}K}}
\newcommand{\cHFL}{\mathcal{H\!F\! L}}
\newcommand{\cHFK}{\mathcal{H\!F\! K}}
\newcommand{\HF}{\mathit{HF}}
\newcommand{\xs}{\ve{x}}
\newcommand{\ys}{\ve{y}}
\newcommand{\zs}{\ve{z}}
\newcommand{\ws}{\ve{w}}
\newcommand{\ps}{\ve{p}}
\newcommand{\Wh}{\mathrm{Wh}}
\renewcommand{\a}{\alpha}
\renewcommand{\b}{\beta}
\newcommand{\veps}{\varepsilon}
\DeclareMathOperator{\Cone}{{Cone}}
\numberwithin{equation}{section}
\newcommand{\ar}{\mathrm{a.r.}}
\newcommand{\llsquare}{[\hspace{-.5mm}[}
\newcommand{\rrsquare}{]\hspace{-.5mm}]}
\newcommand{\alg}{\mathrm{alg}}
\newcommand{\cCo}{\cC\! o}
\newcommand{\cTr}{\cT\! r}
\DeclareMathOperator{\opp}{opp}
\DeclareMathOperator{\Tw}{Tw}
\DeclareMathOperator{\wt}{wt}
\newcommand{\MOD}{\mathsf{Mod}}
\title{The link surgery modules of 2-component L-space links}
\author{Daren Chen}
\address{Department of Mathematics\\California Institute of Technology\\ Pasadena, CA, USA}
\email{darenc@caltech.edu}
\author{Ian Zemke}
\address{Department of Mathematics\\University of Oregon\\  Eugene, OR, USA}
\email{izemke@uoregon.edu}
\author{Hugo Zhou}
\address{Department of Mathematics\\University of Michigan\\  Ann Arbor, MI, USA}
\email{hugozhou@umich.edu}
\begin{document}
\maketitle
\begin{abstract} 
In our earlier work \cite{CZZSatellites}, we studied the link surgery modules of two component L-space links. Therein, we computed two of the four idempotents of such modules.  In this article, we use Koszul duality to give an alternate account of this proof, and also to extend it to compute the entire link surgery modules of such links, modulo a technical result which will be proven in a subsequent paper.
\end{abstract}

\tableofcontents

\section{Introduction}

If $L$ is a link in $S^3$ with integral framing $\Lambda$, the \emph{link surgery complex} of $L$, denoted $\cC_{\Lambda}(L)$, is a chain complex defined by Manolescu and Ozsv\'{a}th \cite{MOIntegerSurgery} which computes the Heegaard Floer homology of $S^3_{\Lambda}(L)$. 

The \emph{link Floer complex} is an invariant of links, defined by Ozsv\'{a}th and Szab\'{o} \cite{OSLinks}. It takes the form of a free and finitely generated chain complex $\cCFL(L)$ over the ring $\bF[W_1,Z_1,\dots, W_n,Z_n]$, where $n=|L|$.  Conceptually, we can think of the link surgery complex $\cC_{\Lambda}(L)$ is an enhancement of the complex $\cCFL(L)$ which incorporates some additional algebraic data necessary for the surgery formula.

In \cite{ZemBordered,ZemExact}, the second author reinterpreted the link surgery complex as an invariant for 3-manifolds with parametrized torus boundary components (i.e. the link complement), and repackaged the surgery complex in terms of certain $A_\infty$-module categories over an algebra, called the \emph{surgery algebra}, denoted $\cK$. This theory was modeled on the bordered theory of Lipshitz, Ozsv\'{a}th and Thurston \cite{LOTBordered}, and features a gluing theorem for the invariant of a manifold obtained by gluing two 3-manifolds with torus boundaries together.

To a link with 2-components $L\subset S^3$, the second author defines in \cite{ZemBordered} an $A_\infty$-bimodule (more precisely, a $DA$-bimodule), denoted ${}_{\cK} \cX(L)^{\cK}$,
which encodes the link surgery complex of $L$. The algebraic structure of this invariant as a bimodule (as opposed to just a chain complex) allows for the gluing formulas from \cite{ZemBordered}.

In this article, we study the link surgery complexes and bimodules of a family of 2-component links called \emph{L-space links}. We recall that a link $L\subset S^3$ is an \emph{L-space link} if $S^3_{\Lambda}(L)$ is an L-space for all integral surgeries $\Lambda \gg 0$. (Recall that a rational homology 3-sphere is an \emph{L-space} if $\rank_{\Z/2} \widehat{\HF}(Y)$ is equal to the number of elements in $H_1(Y)$).

L-space knots and links are well-studied in the literature. Ozsv\'{a}th and Szab\'{o} proved in \cite{OSlens} that the knot Floer complex of an L-space knot $K$ is a staircase complex. For example, the complex of the $(3,4)$ torus knot is freely generated over $\bF[W,Z]$ by five elements, $\xs_0$, $\xs_1$, $\xs_2$, $\xs_3$ and $\xs_4$, and has differential shown below:
\begin{equation}
\begin{tikzcd}[labels=description, column sep=.5cm]& \xs_1 \ar[dl,"W"] \ar[dr, "Z^2"]&& \xs_3 \ar[dl, "W^2"] \ar[dr, "Z"]\\
	\xs_0 && \xs_2&& \xs_4
\end{tikzcd}.
\label{eq:T(3,4)}
\end{equation}
In \cite{CZZSatellites}, we extended this computation to 2-component L-space links. We proved that if $L$ is a 2-component L-space link, then $\cCFL(L)$ is formal, i.e., homotopy equivalent to a free-resolution of its homology. 

Work of the second author with Borodzik and Liu \cite{BLZLattice} extends Ozsv\'{a}th and Szab\'{o}'s result in a different direction. Therein, it is proven that if $L$ is a \emph{plumbed} L-space link in $S^3$, then $\cCFL(L)$ is also formal. It is an open question whether non-plumbed L-space links $L$ with more than 2-components have formal link Floer complexes.

Using work of Gorsky and N\'{e}methi \cite{GorskyNemethiLattice}, Liu \cite{Liu-L-spaces} describes a reduced model of the link surgery complex $\cC_{\Lambda}(L)$ of a 2-component L-space link. This computation is sufficient for computing the Heegaard Floer homologies of surgeries on 2-component L-space links, but it is not sufficient for understanding the bimodule structure on ${}_{\cK} \cX(L)^{\cK}$, which is necessary for the gluing formulas from \cite{ZemBordered}. 

For the theorem statements in this paper, it is important also that the link surgery bimodule ${}_{\cK} \cX(L)^{\cK}$ depends on an extra choice of data, called an \emph{arc-system}. For the purpose of this introduction, an arc system is equivalent to a designation of each component of $L$ as having either an ``alpha parallel arc'' or a ``beta parallel arc''. (The meaning of this designation is not important for our paper).

In this paper, we study the $DA$-bimodules of 2-component L-space links over the surgery algebra. In \cite{CZZSatellites}, we gave a partial computation of these bimodules. We computed two of the four idempotents of the module ${}_{\cK} \cX(L)^{\cK}$. That is, inside of $\cK$ there is a subalgebra $R_0:=\bF[W,Z]\subset \cK$, and we computed the corresponding bimodule with outputs restricted to this subalgebra. The main goal of this paper is to compute the entire complex (modulo some technical arguments that will appear in the subsequent \cite{ZemUEq}).

Before stating our theorem, we need the following technical definition:

\begin{define}
 We say that the structure maps $\delta_{n+1}^1$ of a  type-$DA$ bimodule ${}_{\cK} \cX^{\cK}$ are \emph{$U$-equivariant} if
\[
\delta_{n+1}^1(a_n,\dots,U a_i,\dots a_1,\xs)=\delta_{n+1}^1(a_n,\dots, a_i,\dots, a_1,\xs)\cdot U
\]
for all $a_n,\dots, a_1\in \cK$ and $\xs\in \cX$. Here, $\cdot U$ means to multiply the $\cK$ factor of the output of $\delta_{n+1}^1$ by $U$.
\end{define}

In this article, we define a candidate bimodule ${}_{\cK} \cX^{\alg}(L)^{\cK}$ for a 2-component L-space link $L\subset S^3$. This bimodule is determined by the $H$-function of $L$. By work of Gorsky and N\'{e}methi \cite{GorskyNemethiLattice}, the $H$-function is determined by the Alexander polynomials of $L$ and its sublinks, so we can also view $\cX^{\alg}(L)$ as being determined by the Alexander polynomials of $L$ and its sublinks.

\begin{thm}
\label{thm:main-computation-intro} Let $L\subset S^3$ be a two component L-space link. If $\scA$ is a choice of arc-system for $L$ such that the bimodule ${}_{\cK} \cX(L;\scA)^{\cK}$ has a model with $U$-equivariant structure maps, then there is a homotopy equivalence
\[
{}_{\cK} \cX(L;\scA)^{\cK} \simeq {}_{\cK} \cX^{\alg}(L)^{\cK}. 
\]
\end{thm}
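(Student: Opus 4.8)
The plan is to build an explicit homotopy equivalence ${}_{\cK}\cX(L;\scA)^{\cK} \simeq {}_{\cK}\cX^{\alg}(L)^{\cK}$ by bootstrapping from the partial computation of \cite{CZZSatellites} using Koszul duality, and then using $U$-equivariance to rigidify the remaining structure maps. First I would recall that $\cK$ decomposes over its four idempotents, and that \cite{CZZSatellites} already identifies the restriction of ${}_{\cK}\cX(L;\scA)^{\cK}$ to the subalgebra $R_0 = \bF[W,Z]\subset\cK$ (equivalently, the two idempotents corresponding to the ``non-surgered'' region) with the corresponding restriction of the candidate $\cX^{\alg}(L)$; on that part the module is just a free resolution of $\cHFL(L)$, which is controlled by the $H$-function. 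The new content is to promote this to the full bimodule, i.e.\ to pin down the $\delta^1_{n+1}$ outputs that land in the other idempotents. The key mechanism is that these outputs are governed by the Koszul-dual description of the surgery algebra: the ``surgered'' idempotents of a type-$DA$ module over $\cK$ are, after a change of variables, determined by the type-$D$ side, and Koszul duality converts the free resolution data on the $R_0$-part into the remaining structure maps in an essentially unique way. So the strategy is: (i) set up the Koszul-duality functor between modules over $\cK$ and modules over the dual algebra; (ii) observe that $\cX^{\alg}(L)$ is, by construction, the image under this functor of a model whose $R_0$-restriction matches \cite{CZZSatellites}; (iii) conclude that ${}_{\cK}\cX(L;\scA)^{\cK}$ and ${}_{\cK}\cX^{\alg}(L)^{\cK}$ have the same Koszul-dual, hence are homotopy equivalent.

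The role of the $U$-equivariance hypothesis is to cut down the space of possible models enough that the matching is forced. Concretely, once we know the $R_0$-part and know that the structure maps commute with multiplication by $U$ (in the sense of the definition preceding the theorem), a filtration/induction-on-Alexander-grading argument shows that any two such models with the same $R_0$-restriction are related by a filtered homotopy equivalence. I would run this as a standard obstruction-theory argument: filter both bimodules by the Alexander multi-grading (or by powers of the surgery variables), build the homotopy equivalence one filtration level at a time, and check at each stage that the obstruction to extending lies in a group that vanishes because of $U$-equivariance together with the already-established agreement on $R_0$. The formality statement for $\cCFL(L)$ from \cite{CZZSatellites} provides the base case: the associated graded of ${}_{\cK}\cX(L;\scA)^{\cK}$ is determined, and $\cX^{\alg}(L)$ is defined precisely so that its associated graded agrees.

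The main obstacle, and the reason a technical input from \cite{ZemUEq} is invoked, is controlling the interaction between the $A_\infty$-relations and the $U$-equivariance across all the higher structure maps $\delta^1_{n+1}$ simultaneously — in particular, verifying that the obstruction classes really do vanish rather than merely land in a small group, and that the homotopies one constructs can themselves be taken $U$-equivariant (so the induction closes). There is also a bookkeeping subtlety in matching the two idempotent conventions and the arc-system dependence: the candidate $\cX^{\alg}(L)$ is defined intrinsically from the $H$-function, so one must check that the Koszul-dual computation reproduces exactly the combinatorics of the arc-system $\scA$ on the surgered idempotents, not merely up to an overall relabeling. I expect the bulk of the work to be this compatibility check, whereas the existence of \emph{some} homotopy equivalence once the associated graded and $U$-equivariance are in hand should follow from the general deformation theory of type-$DA$ bimodules over $\cK$.
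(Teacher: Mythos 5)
Your high-level framing --- pass to the Koszul-dual $DD$-bimodule over $(\cK^!,\cK)$, start from the idempotent-$(0,0)$ computation of \cite{CZZSatellites}, and use $U$-equivariance to pin down the rest --- matches the paper's strategy, but the mechanism you propose for the main step has a genuine gap. You assert that Koszul duality converts the free-resolution data on the $R_0$-part into the remaining structure maps ``in an essentially unique way,'' i.e.\ that the surgered idempotents are determined by the $R_0$-restriction. This is false: the $R_0$-restriction is essentially the link Floer complex and does not see the arc system, yet the paper's own example (the Hopf link with two alpha-parallel arcs) has a surgery bimodule that is not even homotopy equivalent to a $U$-equivariant one, hence not to $\cX^{\alg}$. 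So the remaining idempotents and the connecting maps must actually be computed, not transported. In the paper this is done piece by piece in the hypercube decomposition of the $DD$-bimodule: each idempotent summand is identified with the candidate using the L-space condition (homology free over $\bF[U]$ and supported in even $(\gr_{\ws},\gr_{\zs})$-gradings, via Lemma~\ref{lem:free-resolution-homology}) together with an induction on the $\cR_0^!$-weight of the structure maps, carried out with homological perturbation and Lemma~\ref{lem:non-standard-perturbation-lemma}; the length-1 maps are matched using the rigidity results for staircase complexes (Lemmas~\ref{lem:basic-staircase-lemma}--\ref{lem:staircase-no-maps-positive bidegree}: uniqueness up to homotopy of suitably graded maps nonzero on homology, and vanishing of endomorphisms in degree $(-1,-1)$ and in positive bidegree); and the length-2 diagonal map is handled by the explicit null-homotopy of Lemma~\ref{lem:length-2-homotopic}, which exists \emph{only} because of $U$-equivariance --- this is precisely where the hypothesis is essential, and the paper notes the statement fails without it. Your ``filter by Alexander grading and show the obstruction groups vanish'' sketch never identifies these groups or why they vanish; the actual vanishing is these Ext-type staircase computations, and the relevant filtration is by weight in $\cR_0^!$, which forces one to work over the completed algebra $\ve{\cK}^!$ (Proposition~\ref{prop:equivalence-completed-algebra}) and then descend to the uncompleted $\cK^!$ via the equivalence of categories in Proposition~\ref{prop:equivalence-of-categories} --- a step your outline omits entirely.

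A secondary misreading: you say the technical input of \cite{ZemUEq} is invoked to control the interaction of the $A_\infty$-relations with $U$-equivariance inside this proof. In fact \cite{ZemUEq} supplies Theorem~\ref{thm:forthcoming}, the \emph{existence} of a $U$-equivariant model for a suitable arc system; it is needed to apply the theorem to a given link, not to prove it. The statement you are proving takes $U$-equivariance as a hypothesis, and its proof in the paper is self-contained given that hypothesis.
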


In a forthcoming article \cite{ZemUEq}, the second author proves the following:

\begin{thm}[\cite{ZemUEq}]
\label{thm:forthcoming}
 If $L\subset S^3$ is a 2-component link and $\scA$ is an arc-system where the arc for one component of $L$ is beta-parallel while the arc for the other component is alpha-parallel, then ${}_{\cK} \cX(L;\scA)^{\cK}$ admits a  $U$-equivariant model.
\end{thm}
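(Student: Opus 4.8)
The plan is to produce the $U$-equivariant model directly from a carefully chosen bordered Heegaard diagram, using the mixed arc-system to isolate the basepoint that records $U$ from the boundary parametrization, and then to remove any residual failure of $U$-equivariance by a change of basis whose existence is controlled by an obstruction class that vanishes exactly in the mixed case. To begin, recall from \cite{ZemBordered} that ${}_{\cK}\cX(L;\scA)^{\cK}$ is computed from a bordered Heegaard diagram $\cH$ for the complement of $L$ whose two torus boundary components are parametrized as prescribed by $\scA$; the structure maps $\delta^1_{n+1}$ count embedded holomorphic curves in $\Sigma\times[0,1]\times\R$ with prescribed boundary asymptotics, each weighted by the monomial in the weight and surgery variables recording its multiplicities at the basepoints. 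The element $U$ is central in $\cK$, equal in each idempotent to the appropriate local monomial, and $U$-equivariance of $\delta^1_{n+1}$ asks that raising the $U$-weight of an algebra input can be absorbed by a matching raise in the $U$-weight of the $\cK$-output, uniformly across all contributing curves. One can also view ${}_{\cK}\cX(L;\scA)^{\cK}$ as built from the link Floer $DA$-bimodule of $L$ --- which is $U$-equivariant for elementary reasons --- by box-tensoring with the two elementary ``surgery'' bimodules of the components, one of alpha-type and one of beta-type because $\scA$ is mixed, so that it is enough to control the $U$-behaviour of those two elementary pieces and of their tensor product.

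Next, I would choose $\cH$ to be \emph{$w$-adapted}: the basepoint(s) recording $U$ should lie in a single region $R_0$ that is disjoint from the parametrizing arcs on one of the two tori, adjacent to them on the other, and meets the provincial part of $\Sigma$ minimally. In a diagram of this shape, for each holomorphic curve with a nontrivial algebra output the multiplicity of its domain at the $U$-basepoint is pinned down, up to an explicit idempotent-dependent correction, by the part of the curve near the boundary --- precisely the data carried by the algebra inputs --- and this is the geometric source of $U$-equivariance. The hypothesis enters exactly here: for an arc-system of pure type one cannot separate the $U$-basepoint from both sets of parametrizing arcs simultaneously, so the matching of input and output $U$-weights genuinely fails, and confirming this shows the hypothesis is not an artifact of the method.

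If the $w$-adapted diagram yields a model that is $U$-equivariant only up to homotopy, write $\delta=\delta^{\mathrm{eq}}+\delta^{\mathrm{err}}$, the second term recording the curves that move $U$-weight incorrectly. Reorganizing the $DA$-bimodule relations by $U$-weight shows that $\delta^{\mathrm{err}}$ is a cocycle in the deformation complex of $({}_{\cK}\cX^{\cK},\delta^{\mathrm{eq}})$, and the obstruction to removing it by a change of basis is a class in a Hochschild-type self-$\Ext$ group of the bimodule, filtered by $U$-weight. For the mixed arc-system the bimodule is supported in idempotents over which the relevant part of $\cK$ is, up to the $U$-action, finitely generated and free over $\bF[U]$; one then shows that this obstruction group vanishes in the relevant degrees, either by a direct computation or by transporting the question through the Koszul-dual description developed in the body of this paper, where the dual object is visibly $U$-equivariant. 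The homological perturbation lemma converts the vanishing into the desired change of basis, and one checks it can be chosen compatibly with the idempotents and with the pre-existing $U$-action, so that the resulting model is $U$-equivariant on the nose.

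The main obstacle is the combination of the geometric isolation in the second step with the vanishing in the third: making precise, for a mixed arc-system, that the $U$-basepoint can be pushed away from the boundary parametrization, and then controlling the remaining curve counts tightly enough either to see $U$-equivariance outright or to compute that the obstruction class is zero. A secondary but still delicate point is the bookkeeping needed to verify the non-vanishing of the analogous obstruction in the pure case, which both justifies the hypothesis and identifies which curves are responsible for it.
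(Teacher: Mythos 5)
The first thing to note is that this paper does not prove Theorem~\ref{thm:forthcoming}: it is quoted as a black box from the forthcoming article \cite{ZemUEq}, and every result proved in the present paper (Theorem~\ref{thm:main-computation-intro}, Proposition~\ref{prop:equivalence-completed-algebra}, and the lemmas feeding into them) takes the existence of a $U$-equivariant model as a \emph{hypothesis}. So there is no in-paper argument to compare yours against, and your proposal has to stand entirely on its own.

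As written, it does not. Two gaps are decisive. First, the geometric step --- the existence of a ``$w$-adapted'' diagram in which the $U$-basepoint multiplicity of every curve contributing a nontrivial algebra output is pinned down by its boundary asymptotics --- is the entire analytic content of the theorem, and you have only named it, not established it: you do not say which property of a mixed (one alpha-parallel, one beta-parallel) arc-system makes such a diagram available, nor do you verify that the curve counts behave as claimed. Second, the fallback obstruction-theoretic step is circular in one of its two proposed forms: you suggest computing the obstruction group ``by transporting the question through the Koszul-dual description developed in the body of this paper, where the dual object is visibly $U$-equivariant,'' but the identification of ${}_{\cK}\cX(L;\scA)^{\cK}$ with that visibly $U$-equivariant candidate is exactly Theorem~\ref{thm:main-computation-intro}, which is proved \emph{assuming} a $U$-equivariant model exists. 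The remaining option, a ``direct computation'' that the obstruction vanishes, is the theorem restated rather than proved. Note also that the obstruction genuinely can be nonzero --- the paper's remark on the Hopf link with two alpha-parallel arcs exhibits a bimodule that is not even homotopy equivalent to a $U$-equivariant one --- so any correct argument must locate precisely where the mixed hypothesis enters; your sketch gestures at this but does not carry it out.
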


In particular, Theorem~\ref{thm:forthcoming} implies that there is an arc-system for $L$ for which Theorem~\ref{thm:main-computation-intro} applies. 

\begin{rem}  There are arc systems on L-space links for which the structure maps on ${}_{\cK} \cX(L;\scA)^{\cK}$ cannot be made $U$-equivariant. For example, in \cite{ZemBordered}*{Section~16,17}, the second author computed a reduced model of the bimodule ${}_{\cK} \cX(L;\scA)^{\cK}$ when $L$ was a Hopf link and $\scA$ consisted of two alpha-parallel arcs, and the resulting structure maps are not $U$-equivariant. In fact, it is not hard to see that the this bimodule is not even homotopy equivalent to a $U$-equivariant bimodule. 
\end{rem}

\subsection*{Acknowledgments}

The first author was partially supported by the Simons Collaboration Grant on New Structures in Low Dimensional Topology. The second author was partially supported by NSF grant 251324 and a Sloan fellowship. The third author was partially supported by an AMS-Simons travel grant.

\section{Algebraic background}

\subsection{Type-$D$, $A$, $DA$ and $DD$ bimodules}

In this section, we recall the algebraic framework of type-$D$, $A$ and $DA$ bimodules due to Lipshitz, Ozsv\'{a}th and Thurston, as well as the natural analogs of these notions for bimodules over curved $dg$-algebras. We refer the reader to 
 \cite{LOTBordered} \cite{LOTBimodules} for further details on the uncurved setting, and we refer the reader to \cite{OSBordered=HFK}*{Section~3} for additional background on this framework in the presence of curvature. 

Let $\ve{k}$ be a unital ring with characteristic 2. A \emph{curved $dg$-algebra} $(\cA,\mu_i)$ over $\ve{k}$ consists of a $(\ve{k},\ve{k})$-bimodule $\cA$, equipped with $\ve{k}$-linear operations 
\[
\mu_2\colon \cA\otimes_{\ve{k}} \cA\to \cA, \quad \mu_1\colon \cA\to \cA,\quad \mu_0\colon \ve{k}\to \cA
\]
satisfying the following:
\begin{enumerate}
\item $\cA$ is an associative algebra with respect to $\mu_2$.
\item $\mu_1^2=0$, and furthermore $\mu_2$ satisfies the Leibniz rule with respect to $\mu_1$: $\mu_1(a\cdot b)=\mu_1(a)\cdot b+a\cdot \mu_1(b).$ (Here $a\cdot b$ denotes $\mu_2(a,b)$).
\item The element $\mu_0:=\mu_0(1)$ is a cycle and is central, i.e.
\[
\mu_0\cdot a=a\cdot \mu_0\quad \text{and} \quad \mu_1(\mu_0)=0,
\]
for all $a\in \cA$.
\end{enumerate}

If $(\cA,\mu_i)$ is a curved $dg$-algebra, a right \emph{type-$D$ module} over $\cA$, denoted $\cX^\cA$, consists of a right $\ve{k}$-module $\cX$ equipped with $\ve{k}$-linear map
\[
\delta^1\colon \cX\to \cX\otimes \cA
\]
which satisfies
\[
(\bI_{\cX}\otimes \mu_2)\circ(\delta^1\otimes \bI_{\cA})\circ \delta^1+(\bI_{\cX}\otimes \mu_1)\circ \delta^1+\bI_{\cX}\otimes \mu_0=0.
\]

If $(\cA,\mu_i)$ and $(\cB,\nu_i)$ are two curved $dg$-algebras over $\ve{k}$ and $\ve{j}$, respectively, then a $DA$-bimodule ${}_{\cA} \cX^{\cB}$ over $(\cA,\cB)$ consists of a  $(\ve{k},\ve{j})$-bimodule $\cX$, equipped with a collection of $(\ve{k},\ve{j})$-linear structure maps 
\[
\delta_{j+1}^1\colon \underbrace{\cA\otimes_{\ve{k}} \cdots \otimes_{\ve{k}} \cA}_j\otimes \cX\to \cX\otimes_{\ve{j}} \cB
\]
which satisfy the following compatibility condition. We have
\[
(\bI_{\cX}\otimes \mu_2)\circ (\delta_1^1\otimes \bI_{\cA})\circ \delta_1^1+(\bI_{\cX}\otimes \mu_1)\circ \delta_1^1+\bI\otimes \nu_0=0.
\]
Furthermore,  for each $n\ge 1$ and each $a_n,\dots, a_1\in \cA$ and $\xs\in \cX$, we have
\[
\begin{split}
0=&\sum (\bI_{\cX}\otimes \nu_2)(\delta_{n-j+1}^1\otimes \bI_{\cB})(a_n,\dots,a_{j+1},\delta_{j+1}^1(a_j,\dots, a_1,\xs))\\
 +&\sum \delta_{n-i+2}^1(a_n,\dots, \mu_i(a_{j+i-1},\dots, a_j),\dots, a_1,\xs)\\
 +&(\bI_{\cX}\otimes \nu_1)\left(\delta_{n+1}^1(a_n,\dots, a_1,\xs)\right).
\end{split} 
\]
Finally, we discuss $DD$-bimodules. If $\cA$ and $\cB$ are curved $dg$-algebras over $\ve{j}$ and $\ve{k}$, respectively, a $DD$-bimodule ${}^{\cA} \cX^{\cB}$ consists of a $(\ve{j},\ve{k})$-bimodule $\cX$, equipped with a structure map 
\[
\delta^{1,1}\colon \cX\to \cA\otimes_{\ve{j}} \cX\otimes_{\ve{k}} \cB.
\]
The compatibility condition is the same as if  we view $\delta^{1,1}$ as a map from $\cX$ to $\cX\otimes_{\ve{j}^{\opp}\otimes \ve{k}} (\cA^{\opp}\otimes_{\bF} \cB)$, and view $\cX$ as a type-$D$ structure over $\cA^{\opp}\otimes_{\bF} \cB$. Here, $\cA^{\opp}$ denotes the curved $dg$ algebra with underlying space $\cA$, but with $\mu_2^{\opp}(a_1,a_2)=\mu_2(a_2,a_1)$ (and the same differential and curvature). The tensor product $\cA^{\opp}\otimes_{\bF} \cB$ is equipped with the curvature $\mu_0\otimes 1+1\otimes \mu_0$, and with the differential $\mu_1\otimes \id+\id\otimes \mu_1$.

\subsection{A perturbation lemma}

In this section we prove a simple perturbation lemma, which will be useful later. We recall that a \emph{$dg$-category} is a category where the morphism sets are chain complexes of abelian groups, and whose differential satisfies the Leibniz rule with respect to composition. For our purposes, we will only consider the case where the morphism sets have characteristic 2, so that the differential satisfies
\[
\d(g\circ f)=\d(g)\circ f+g\circ \d(f).
\]

If $\cC$ is a $dg$-category, a \emph{twisted complex} in $\cC$ consists of a pair $\Tw_{\a}(X):=(X,\a)$ where $X$ is an object of $\cC$ and $\a\in \Hom_{\cC}(X,X)$ is a morphism satisfying the Mauer-Cartan relation:
\[
\d(\a)+\a\circ \a=0.
\]

The set of twisted complexes forms a category.
If $\Tw_{\a}(X)$ and $\Tw_\b(Y)$ are twisted complexes, we define
\[
\Hom_{\Tw(\cC)}(\Tw_{\a}(X),\Tw_\b(Y)):=\Hom_{\cC}(X,Y),
\]
with differential
\[
\d_{\Tw(\cC)}(f)=\d_{\cC}(f)+\b\circ f+f\circ \a.
\]
It is easy to check that $\Tw(\cC)$ is a $dg$-category. (Compare \cite{SeidelFukaya}*{Section~1.5}).

The categories of interest to us (chain complexes, type-$D$ modules, $A_\infty$-modules, etc.) are all closed under twists, in the sense that there is a canonical functor from $\Tw(\cC)$ to $\cC$. For example, in the category of chain complexes, this functor sends $\Tw_{\a}(X,d)$ to $(X,d+\a)$. Similarly for the $dg$-categories consisting of type-$D$ and type-$DA$ bimodules over $dg$-algebras, the twisted complex $\Tw_\a((X,\delta_{j+1}^1))$ is sent to $(X,\a_{j+1}^1+\delta_{j+1}^1)$. Because of this, we typically conflate $\Tw_\a(X)\in \Tw(\cC)$ and its image in $\cC$.

\begin{lem}
\label{lem:non-standard-perturbation-lemma} Suppose that $X$ is an object of a $dg$ category and $h\in \End(X)$ is an endomorphism such that $1+h$ is invertible. Then $\a=\d(h)(1+h)^{-1}$ is a Mauer-Cartan element and $X$ is isomorphic to $\Tw_{\a}(X)$.
\end{lem}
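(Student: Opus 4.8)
The plan is to produce the isomorphism by hand. The morphism $1+h\in\End_{\cC}(X)$, regarded as a map from $X=\Tw_0(X)$ to $\Tw_{\a}(X)$, will be the desired isomorphism, and its inverse will be $(1+h)^{-1}$ regarded as a map $\Tw_{\a}(X)\to\Tw_0(X)$. Everything then reduces to a handful of short computations that use only the Leibniz rule, $\d\circ\d=0$, $\d(\id)=0$, and the characteristic-$2$ hypothesis on the morphism complexes.

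First I would differentiate the relation $(1+h)\circ(1+h)^{-1}=\id$. Since $\d(\id)=0$ and $\d(1+h)=\d(h)$, the Leibniz rule gives $\d(h)\circ(1+h)^{-1}+(1+h)\circ\d\big((1+h)^{-1}\big)=0$, and composing with $(1+h)^{-1}$ on the left yields the key identity
\[
\d\big((1+h)^{-1}\big)=(1+h)^{-1}\circ\d(h)\circ(1+h)^{-1}=(1+h)^{-1}\circ\a .
\]
Next I would check the Mauer-Cartan equation for $\a$. Applying $\d$ to $\a=\d(h)\circ(1+h)^{-1}$, using $\d(\d(h))=0$ and the key identity,
\[
\d(\a)=\d(h)\circ\d\big((1+h)^{-1}\big)=\d(h)\circ(1+h)^{-1}\circ\a=\a\circ\a ,
\]
so $\d(\a)+\a\circ\a=2(\a\circ\a)=0$. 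Hence $\a$ is a Mauer-Cartan element and $\Tw_{\a}(X)$ is a genuine object of $\Tw(\cC)$.

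It then remains to check that $1+h$ is an isomorphism in $\Tw(\cC)$. Recall that for $f\in\Hom_{\Tw(\cC)}(\Tw_{\b_1}(X),\Tw_{\b_2}(X))=\End_{\cC}(X)$ one has $\d_{\Tw(\cC)}(f)=\d_{\cC}(f)+\b_2\circ f+f\circ\b_1$, the target's element composing on the left and the source's on the right. With source $\Tw_0(X)$ and target $\Tw_{\a}(X)$,
\[
\d_{\Tw(\cC)}(1+h)=\d(h)+\a\circ(1+h)=\d(h)+\d(h)\circ(1+h)^{-1}\circ(1+h)=\d(h)+\d(h)=0,
\]
so $1+h$ is a closed (degree-$0$) morphism $\Tw_0(X)\to\Tw_{\a}(X)$. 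With source $\Tw_{\a}(X)$ and target $\Tw_0(X)$, the key identity gives
\[
\d_{\Tw(\cC)}\big((1+h)^{-1}\big)=\d\big((1+h)^{-1}\big)+(1+h)^{-1}\circ\a=(1+h)^{-1}\circ\a+(1+h)^{-1}\circ\a=0,
\]
so $(1+h)^{-1}$ is closed as well. Since $1+h$ and $(1+h)^{-1}$ are already mutually inverse in $\End_{\cC}(X)$, and composition in $\Tw(\cC)$ agrees with that of $\cC$, they are mutually inverse closed morphisms in $\Tw(\cC)$; applying the canonical functor $\Tw(\cC)\to\cC$ gives $X\cong\Tw_{\a}(X)$.

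I do not expect a genuine obstacle: this is, as the name suggests, a simple perturbation lemma. The only points demanding care are the bookkeeping of which Mauer-Cartan element appears on which side in $\d_{\Tw(\cC)}$ on each $\Hom$-space, and the systematic use of characteristic $2$, which is precisely what makes the two copies of $\d(h)$, and the two copies of $(1+h)^{-1}\circ\a$, cancel rather than merely add.
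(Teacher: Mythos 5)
Your proof is correct and follows essentially the same route as the paper's: both derive the identity for $\d\big((1+h)^{-1}\big)$ by differentiating the inverse relation, use it to verify the Mauer-Cartan equation for $\a$, and then check that $1+h$ and $(1+h)^{-1}$ are mutually inverse cycles in $\Tw(\cC)$. Your write-up merely spells out the cycle condition for $(1+h)^{-1}$, which the paper leaves as a straightforward verification.
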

\begin{proof} To see that $\a$ is a Mauer-Cartan element, we observe that
\[
\d(\a)+\a^2= \d(h) \d( (1+h)^{-1})+\d(h)(1+h)^{-1}\d(h)(1+h)^{-1}.
\]
This is zero if and only if
\[
\d(h) \d( (1+h)^{-1}) (1+h)+\d(h)(1+h)^{-1}\d(h)
\]
is zero. We note that $\d((1+h)^{-1}) (1+h)=(1+h)^{-1}\d(1+h)=(1+h)^{-1}\d(h)$, and hence the above equation vanishes.

We now define a morphism $f$ from $X$ to $\Tw_{\a}(X)$ to be $1+h$. By definition, the twisted morphism differential of $f$ is equal to
\[
\d(f)+\a\circ f,
\]
(where $\d(f)$ is the differential of $f$ when viewed as an endomorphism of $X$). With our choice of $\a$, we observe that the above is equal to
\[
\d(h)+\d(h) (1+h)^{-1} (1+h)=0,
\]
so $f$ is a cycle with respect to the differential on morphisms of twisted complexes.  Similarly, we can define a morphism $g=(1+h)^{-1}$ from $\Tw_\a(X)$ to $X$ and it is straightforward to verify that $g$ is a cycle as well. We observe that $f$ and $g$ are inverses, so the claim is proven.
\end{proof}

\begin{rem} Our typical usage of the above lemma is as follows. We will consider a $DD$-bimodule ${}^{\cA} \cX^{\cB}$ with differential $\delta^{1,1}$. We will want to perturb the structure map $\delta^{1,1}$ by adding a term $\d(h)$ to it in order to simplify some terms of the differential. In general, this will not be possible because $\d(h)$ is not typically a Mauer-Cartan element. However, $\d(h)(1+h)^{-1}$ is a Mauer-Cartan element (as demonstrated in the previous lemma). Furthermore, in the cases of interest to us, the infinite sum $(1+h)^{-1}=\sum_{n=0}^\infty h^n$ makes sense, and furthermore we view $\a=\sum_{n=0}^\infty \d(h) h^n$ as coinciding with $\d(h)$ up to first approximation. 
\end{rem}

\section{Heegaard Floer background}

\subsection{Knot and link Floer homology}

We recall some basics about knot and link Floer homology. Knot Floer homology is an invariant of knots due independently to Ozsv\'{a}th and Szab\'{o} \cite{OSKnots} and Rasmussen \cite{RasmussenKnots}. Link Floer homology is an invariant of links due to Ozsv\'{a}th and Szab\'{o}.

 To a knot $K\subset Y$, the invariant takes the form of a free and finitely generated chain complex $\cCFK(Y,K)$ over $\bF[W,Z]$, where $\bF=\Z/2$.  We are primarily interested in the case that $Y=S^3$, in which case we write $\cCFK(K)$ for the invariant. There are three gradings of interest, denoted $\gr_{\ws},\gr_{\zs}$ and $A$, which are related by the equation
 \[
 A=\frac{1}{2}(\gr_{\ws}-\gr_{\zs}).
 \]
 The algebra $\bF[W,Z]$ is graded by
\[
(\gr_{\ws},\gr_{\zs},A)(W)=(-2,0,-1),\quad \text{and} \quad (\gr_{\ws},\gr_{\zs},A)(Z)=(0,-2,1). 
\]

It is convenient to write
\[
U:=WZ.
\]

If $L\subset S^3$ is a link, the invariant $\cCFL(L)$ takes the form of a free, finitely generated chain complex over the ring $\bF[W_1,\dots, W_n, Z_1,\dots, Z_n]$. This invariant has a more complicated set of gradings. We write 
\[
\bH(L)=\prod_{i=1}^n \left(\Z+\frac{\lk(K_i,L\setminus K_i)}{2}\right).
\]
There is an Alexander grading $A=(A_1,\dots, A_n)$ which takes values in $\bH(L)$. The variables have gradings
\[
A_i(Z_j)=-A_i(W_j)=\delta_{ij},
\]
where $\delta_{ij}$ is the Kronecker delta.

Similar to the case of knots, it is convenient to write $U_i=W_iZ_i$. All of the $U_i$ induce chain homotopic actions on homology. See, for example, \cite{BLZ_Non_Cuspidal}*{Equation~2.8}.

As before, there are Maslov gradings $\gr_{\ws}$ and $\gr_{\zs}$, though in fact there is a more refined collection of Maslov gradings. If $\fro$ is any orientation on $L$, then there is a Maslov grading
$\gr_{\fro}$ on $\cCFL(L)$. The construction of $\cCFL(L)$ requires a choice of base orientation, which we will denote $\fro_0$. The Maslov grading $\gr_{\fro_0}$ coincides with $\gr_{\ws}$, and more generally $\gr_{\fro}$ satisfies
\[
\gr_{\fro}(W_i)=
\begin{cases}-2 & \text{ if } \fro|_{K_i}=\fro_0|_{K_i},\\
0 & \text{ if } \fro|_{K_i}=-\fro_0|_{K_i} 
\end{cases}.
\]
Similarly
\[
\gr_{\fro}(Z_i)=
\begin{cases}0 & \text{ if } \fro|_{K_i}=\fro_0|_{K_i},\\
-2 & \text{ if } \fro|_{K_i}=-\fro_0|_{K_i} 
\end{cases}.
\]
We have the following relation:
\[
\gr_{\fro_0}-\gr_{\fro}=\left(\sum_{\fro|_{K_i}=-\fro_0|_{K_i}} 2A_i\right)-\lk(L_\fro,L\setminus L_\fro),
\]
where $L_\fro$ is the set of components of $L$ where $\fro$ differs from $\fro_0$. In computing the above linking number, we orient $L$ as in $\fro_0$. See \cite{ZemBordered}*{Lemma~6.4}.

\subsection{L-space knots and links}
\label{sec:L-space-links-background}

An important class of knots and links are \emph{L-space links}. A link $L$ is called an \emph{L-space link} if $S^3_{\Lambda}(L)$ is a Heegaard Floer L-space for all integral framings $\Lambda \gg 0$. (Recall that a rational homology 3-sphere is an \emph{L-space} if $\HF^-(Y)$ is isomorphic to $\bigoplus_{|H_1(Y)|} \bF[U]$, where $|H_1(Y)|$ is the number of elements in $H_1(Y)$). Using the large surgeries formula of Manolescu and Ozsv\'{a}th \cite{MOIntegerSurgery}*{Theorem~12.1}, an equivalent condition for $L$ to be an L-space link is that the module $\cHFL(L)$
is free as an $\bF[U]$ module, where $U$ acts by any of the $U_i=W_iZ_i$ (all $U_i$ have the same action on homology). Equivalently, $L$ is an L-space link if and only
\[
\cHFL(L)\iso \bigoplus_{\ve{s}\in \bH(L)} \bF[U]. 
\]

When $K$ is an L-space knot, the knot Floer homology group $\cHFL(L)$ has rank 0 or rank 1 in each $(\gr_{\ws},\gr_{\zs})$-grading. Furthermore, it is not hard to see that $\gr_{\ws}(\xs)\le 0$ and $\gr_{\zs}(\xs)\le 0$ for any non-zero, homogeneously graded element $\xs\in \cHFL(L)$. Therefore, $\cHFL(L)$ can be naturally embedded inside of $\bF[W,Z]$ as a monomial ideal (an ideal spanned by monomials). We illustrate the link Floer homology of the $(3,4)$-torus knot, $\cHFL(T_{3,4})$, in Figure~\ref{fig:monomial ideal of T(3,4)}.

\begin{figure}[h]
		\[
		\begin{tikzcd}[labels=description, column sep=.5cm]& \xs_1 \ar[dl,"W"] \ar[dr, "Z^2"]&& \xs_3 \ar[dl, "W^2"] \ar[dr, "Z"]\\
					\xs_0 && \xs_2&& \xs_4
				\end{tikzcd}
		 \raisebox{-1cm}{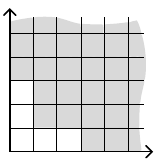}
		 \]
	\caption{The link Floer complex $\cCFK(T_{3,4})$ (left) and its homology $\cHFK(T_{3,4})$ (right). Each shaded square on the right denotes a generator over $\bF$. Multiplication by $W$ shifts to the right, while multiplication by $Z$ shifts upwards. We can view the entire upper-right quadrant as $\bF[W,Z]$.}
	\label{fig:monomial ideal of T(3,4)}
\end{figure}

\subsection{The link surgery complex}

To a link $L\subset S^3$ with integral framing $\Lambda$, Manolescu and Ozsv\'{a}th \cite{MOIntegerSurgery} describe a chain complex $\cC_{\Lambda}(L)$. The chain complex is a module over $\bF\llsquare U_1,\dots, U_n\rrsquare$, where $n=|L|$. The homology  of $\cC_{\Lambda}(L)$ is isomorphic to the completion of the Heegaard Floer homology with respect to $(U)$:
\[
H_*(\cC_{\Lambda}(L))\iso \ve{\HF}^-(S^3_{\Lambda}(L)):=\HF^-(S^3_{\Lambda})\otimes_{\bF[U]} \bF\llsquare U\rrsquare.
\]
Work of the second author \cite{ZemExact} extends this theory to arbitrary Morse framed links in 3-manifolds.

We now sketch some properties of the link surgery complex, focusing on links in $S^3$. We refer the reader to \cite{MOIntegerSurgery} or \cite{ZemExact} for further details.  The link surgery complex $\cC_{\Lambda}(L)$ is a hypercube of chain complexes (see \cite{MOIntegerSurgery}*{Section~5} for background on this algebraic formalism). In more detail, the underlying group of $\cC_{\Lambda}(L)$ decomposes as a direct sum 
\[
\cC_{\Lambda}(L)\iso\bigoplus_{\veps\in \{0,1\}^n} \cC_{\veps}.
\]
Furthermore, the differential $D$ on $\cC_{\Lambda}(L)$ decomposes as a sum
\[
D=\sum_{ \veps\le \veps'} D_{\veps,\veps'},
\]
where $D_{\veps,\veps'}$ maps $\cC_{\veps}$ to $\cC_{\veps'}$.

The complex $\cC_{\vec{0}}$ is isomorphic to a completion of $\cCFL(L)$. More generally, there is a homotopy equivalence
\begin{equation}
\cC_\veps\simeq \cCFL(L_{\veps},\ve{p}_{\veps})\otimes_{\bF} \bF[T_{i_1},T_{i_1}^{-1},\dots, T_{i_k},T_{i_k}^{-1}].
\label{eq:C_epsilon=CFL-L-veps}
\end{equation}
In the above, $L_{\veps}\subset L$ is the sublink of components $K_i\subset L$ where $\veps_i=0$, and $\{i_1,\dots, t_k\}=\veps^{-1}(1)$. Also $\ve{p}_{\veps}$ is a collection of $n-|L_{\veps}|$ extra basepoints, which are disjoint from $L_{\veps}$.

The differential $D_{\veps,\veps'}$ decomposes further as a sum. Write 
\[
M_{\veps,\veps'}:=L_{\veps}\setminus L_{\veps'}.
\] 
The components of $M_{\veps,\veps'}$ are in one-to-one correspondence with the set of indices which are increased in $\veps'$ from $\veps$. The differential decomposes over the set of orientations of $M_{\veps,\veps'}$, for which we write $\fro(M_{\veps,\veps'})$. If $\fro$ is an orientation, write $M_{\veps,\veps'}^{\fro}$ for $M_{\veps,\veps'}$, equipped with this orientation. Following standard conventions, we write $\vec{M}_{\veps,\veps'}$ for a sublink of $L$, equipped with a choice of orientation. The map $D_{\veps,\veps'}$ decomposes as a sum
\[
D_{\veps,\veps'}=\sum_{\fro(M_{\veps,\veps'})} \Phi^{M^{\fro}_{\veps,\veps'}}_{\veps,\veps'},
\]
where the sum is over the set of orientations on $M$. We frequently omit the subscripts and write just $\Phi^{\vec{M}}$, when $\vec{M}$ is an oriented sublink of $L$. 

Following standard conventions, we identify a base orientation on $L$, and say that other orientations are obtained by reversing components relative to this base orientation.

Given a base orientation on $L$, we obtain a $\bH(L)$-valued Alexander grading on $\cCFL(L)$, and consequently on $\cC_{\vec{0}}$. If $\veps$ is an arbitrary index, then Equation~\eqref{eq:C_epsilon=CFL-L-veps} gives a $\bH(L_\veps)$-valued Alexander grading on $\cC_{\veps}$. We extend this to an Alexander grading taking values in $\bH(L_{\veps})\times \Z^{n-|L_{\veps}|}$, by declaring $A_j(T_i^{\pm 1})$  to be $\pm \delta_{ij}$ (the Kronecker delta). Note that the $\Z^{n-|L_\veps|}$ components are only well-defined up to an overall shift, though the $\bH(L_\veps)$ component of the grading is uniquely determined. Abusing notation slightly, we will write $A=(A_1,\dots, A_n)$ also for this extension of the Alexander grading. See \cite{ZemExact}*{Section~7.2} for a similar discussion.

There is  another natural normalization of the Alexander grading, for which we write $A^\sigma$ and refer to as the \emph{$\sigma$-normalized Alexander grading}. This grading takes values in $\bH(L)$. It is determined by the property that $A^\sigma$ coincides with $A^{\vec{0}}$ on $\cC_{\vec{0}}\iso \cCFL(L)$ and that the maps $\Phi^{\vec{M}}$ preserve $A^\sigma$ whenever $\vec{M}\subset L$ has only positively oriented components. 

If $\vec{M}$ is an oriented sublink of $L$, then
\[
A^\sigma_i(\Phi^{\vec{M}})=\frac{\lk(K_i,M)-\lk(K_i,\vec{M})}{2}.
\]
The quantity on the right can be reinterpreted as the sum over $-\lk(K_i,\vec{J})$, ranging over components $\vec{J}$ of $\vec{M}$ which are oriented negatively relative to the base orientation on $L$. (See \cite{MOIntegerSurgery}*{pg. 5}). 

\begin{lem}
There is a choice of absolute lift of the grading $A$, described above, which is given on $\cC_{\veps}$ by the formula
\[
A_i=A_i^\sigma+\begin{cases} \lk(K_i,L\setminus L_\veps)/2 & \text{ if } \veps_i=0\\
\lk(K_i,L\setminus K_i)/2& \text{ if } \veps_i=1.
\end{cases}
\] 
\end{lem}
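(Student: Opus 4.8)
The plan is to pin down the absolute normalization by tracking how the Alexander grading $A$ behaves under the maps $\Phi^{\vec M}$ connecting the various $\cC_\veps$, starting from the fact that on $\cC_{\vec 0}$ we have $A = A^{\vec 0} = A^\sigma$ by definition. So the claimed formula holds at $\veps = \vec 0$ (where $\lk(K_i, L\setminus L_{\vec 0}) = \lk(K_i, \emptyset) = 0$), and the content is to propagate this up the hypercube. First I would recall that the relative Alexander grading on $\cC_\veps$ is intrinsically defined via Equation~\eqref{eq:C_epsilon=CFL-L-veps} (the $\bH(L_\veps)$-component is canonical, the $\Z^{n-|L_\veps|}$-component free), so the statement is really an assertion that these intrinsic relative gradings can be simultaneously lifted to absolute $\bH(L)$-valued gradings in a way compatible with all the maps $\Phi^{\vec M}_{\veps,\veps'}$, and that one such lift is the one written. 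The normalization is determined by a single requirement: the lift should be compatible with the maps $\Phi^{\vec M}$ for $\vec M$ positively oriented, in the same sense that defines $A^\sigma$.

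The key computation is the shift of $A_i$ under a single-step map $\Phi^{\vec M}_{\veps,\veps'}$ where $\veps' = \veps + e_k$, i.e.\ $M = K_k$. I would combine two inputs: (i) the $\sigma$-normalized statement $A^\sigma_i(\Phi^{\vec M}) = \tfrac{1}{2}(\lk(K_i,M) - \lk(K_i,\vec M))$ recalled just above the lemma, which for a single component $M = K_k$ oriented positively gives $A^\sigma_i(\Phi^{\vec M}) = 0$ and for $K_k$ oriented negatively gives $A^\sigma_i(\Phi^{\vec M}) = \lk(K_i, K_k)$ (with the base orientation); and (ii) the passage between the relative grading on $\cC_\veps$ (valued in $\bH(L_\veps)\times \Z^{\cdots}$) and on $\cC_{\veps'}$, under which the component $K_k$ moves from being a genuine link component of $L_\veps$ to contributing a $T_k^{\pm}$ variable in $\cC_{\veps'}$. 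The point is that the candidate absolute grading, call it $\tilde A_i := A_i^\sigma + c_i(\veps)$ with $c_i(\veps)$ the bracketed correction term, must satisfy $\tilde A_i(\Phi^{\vec{K_k}}_{\veps,\veps'}) = 0$ when $K_k$ is positively oriented, since $\Phi$ with positively oriented sublink is the map that records the Alexander filtration consistently. This forces $c_i(\veps') - c_i(\veps) = -A^\sigma_i(\Phi^{\vec{K_k}}) = 0$ on the nose when $i \ne k$ and $\veps_k$ changes $0 \to 1$ — wait, one must be careful: the correction term does jump, because $\lk(K_i, L\setminus L_\veps)$ versus $\lk(K_i, L\setminus L_{\veps'})$ differ exactly by the term $\lk(K_i, K_k)$ when $\veps_i = 0$, and the $\veps_i=1$ clause is independent of $\veps$. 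I would check that this jump in $c_i$ is precisely cancelled by the behavior of the relative grading under the re-identification of $\cC_{\veps'}$ via \eqref{eq:C_epsilon=CFL-L-veps}, together with the $A^\sigma$-shift, so that the total absolute grading on sources and targets of the positively-oriented $\Phi$ maps is preserved; and separately verify that on $\cC_{\veps}$ itself the formula restricts to a genuine $\bH(L)$-lift of the canonical $\bH(L_\veps)$-component of the relative grading.

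So the steps in order are: (1) observe the formula is correct and well-defined at $\veps = \vec 0$; (2) recall the intrinsic relative grading on each $\cC_\veps$ and the ambiguity it carries; (3) compute $A^\sigma_i(\Phi^{\vec{K_k}})$ for $K_k$ positively and negatively oriented using the displayed formula for $A^\sigma_i(\Phi^{\vec M})$; (4) compute the jump $c_i(\veps+e_k) - c_i(\veps)$ of the bracketed correction term directly from the linking-number expressions, splitting into the cases $i = k$ versus $i \ne k$ and $\veps_i \in \{0,1\}$; (5) check these two are compatible, i.e.\ that $\tilde A$ is preserved by all positively-oriented $\Phi$ maps and is a valid absolute lift on each $\cC_\veps$, and invoke that such a lift is unique once normalized on $\cC_{\vec 0}$. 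The main obstacle I anticipate is step (4)–(5): bookkeeping the interaction between the $\sigma$-normalization shift $A^\sigma_i(\Phi^{\vec M}) = \tfrac{1}{2}(\lk(K_i,M)-\lk(K_i,\vec M))$ and the change of the ``extra basepoint'' variables $T_j^{\pm 1}$ when a component is surgered, since the $\Z^{n-|L_\veps|}$-part of the grading is only defined up to shift and one must choose these shifts coherently across the whole hypercube. This is exactly the kind of normalization argument carried out in \cite{ZemExact}*{Section~7.2}, which I would follow closely.
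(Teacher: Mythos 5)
Your plan is essentially the paper's proof: the formula is verified at $\veps=\vec{0}$ and then propagated by induction over the hypercube, matching the jump $-\lk(K_i,K_j)/2$ of the correction term against the shift of the canonical absolute Alexander grading under $\Phi^{+K_j}$ (which the paper supplies via \cite{ZemBordered}*{Lemma~6.4 (4)} and the description of $\Phi^{K_j}$ as localization at $Z_j$ followed by setting $Z_j=T_j$), with the $\veps_i=1$ clause being just the integrality observation that $A_i^\sigma+\lk(K_i,L\setminus K_i)/2\in\Z$. One small correction: it is $A^\sigma$, not the absolute grading $A$ (nor your $\tilde{A}_i$), that is preserved by positively oriented maps, so your intermediate requirement $\tilde{A}_i(\Phi^{+K_k})=0$ should be discarded in favor of the matching-of-shifts you arrive at afterward, which is exactly the paper's argument.
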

\begin{proof} There are two non-trivial claims. The first is that if $\veps_i=0$, then $A_i$ agrees with the absolute Alexander grading from $\cCFL(L_\veps)$. The second is that if $\veps_i=1$, then $A_i$ takes values in $\Z$.

The second claim, i.e., that if $\veps_i=1$ then $A_i$ takes values in $\Z$ is straightforward, as $A_i^{\sigma}$ takes values in $\lk(K_i,L\setminus K_i)/2+\Z$ by definition. Therefore we focus on the first claim.

The first claim is clear if $\veps=\vec{0}$. We prove the claim by induction on $\veps$ and $i$. Assume that it is true for some $\veps$ and index $i$ such that $\veps_i=0$ and $\veps'=\veps+e_j$, for some $j\neq i$. The shift in the statement changes by
\[
\frac{\lk(K_i,L\setminus (L_{\veps}\cup K_j))}{2}-\frac{\lk(K_i,L\setminus L_{\veps})}{2}=-\frac{\lk(K_i,K_j)}{2}.
\]
On the other hand, this is exactly the shift in the absolute Alexander grading $A_i$ under the map $\Phi^{+K_j}$ by \cite{ZemBordered}*{Lemma~6.4 (4)}. (Implicitly, we are here using the fact that $\Phi^{K_j}$ can be described as the composition of the map which localizes at $Z_j$, followed by the composition of a grading preserving homotopy equivalence which sets $U_j=W_jZ_j$ and $Z_j=T_j$). This completes the proof.
\end{proof}
Using the above shifts, it is easy to verify that if $L_{\veps'}=L_{\veps}\sqcup M$, then
\begin{equation}
A_i(\Phi^{\vec{M}}_{\veps,\veps'})=\frac{1}{2}\cdot
\begin{cases}-\lk(K_i,\vec{M}) & \text{ if } \veps_i=\veps_i'=0\\
-\lk(K_i,L_{\veps}\setminus K_i)+\lk(K_i,M)-\lk(K_i,\vec{M}) & \text{ if } \veps_i=0, \veps_i'=1\\
\lk(K_i,M)-\lk(K_i,\vec{M})& \text{ if } \veps_i=\veps_i'=1.
\end{cases}
\label{eq:Alexander-grading-shifts}
\end{equation}

\begin{rem} The formula for the grading shift of $\Phi^{\vec{M}}$ with respect to $A^\sigma$ is simpler than with respect to $A$. However, the grading $A$ is simpler to write down on $\cC_{\veps}$, since it is induced by the absolute Alexander grading on $\cCFL(L_\veps)$. 
\end{rem}

%
%

\subsection{The surgery algebra $\cK$}

We now recall the surgery algebra from \cite{ZemBordered,ZemExact}. We write $\ve{I}=\ve{I}_0\oplus \ve{I}_1$ for an idempotent ring on two idempotents, where each $\ve{I}_{\veps}\iso\bF = \Z/2$. The surgery algebra $\cK$ is defined as follows. We set
\[
\ve{I}_0\cdot \cK\cdot \ve{I}_0=\bF[W,Z], \quad \ve{I}_1\cdot \cK\cdot \ve{I}_1=\bF[U,T,T^{-1}],\quad \ve{I}_0\cdot  \cK \cdot \ve{I}_1=0.
\]
 We declare
 \[
\ve{I}_1\cdot \cK\cdot \ve{I}_0=\bF[U,T,T^{-1}]\otimes_{\bF} \langle \sigma, \tau\rangle.
 \]
 The generators $\sigma$ and $\tau$ satisfy
 \[
 \sigma \cdot W^i Z^j=U^i T^{j-i}\cdot \sigma\quad \text{and}\quad \tau \cdot W^i Z^j=U^j T^{j-i}\cdot\tau.
 \]
 
 It is sometimes helpful to also describe two algebra morphisms
 \begin{equation}
 \phi^\sigma, \phi^\tau \colon \bF[W,Z]\to \bF[U,T,T^{-1}]\label{eq:phi-sig-tau}
 \end{equation}
 given by the formulas
 \[
 \phi^\sigma(W^iZ^j)=U^i T^{j-i}\quad \text{and} \quad \phi^\tau(W^iZ^j)=U^j T^{j-i}.
 \]
 We can restate the definitions of $\phi^\sigma$ and $\phi^\tau$ in terms of these maps as
 \[
 \sigma\cdot a=\phi^\sigma(a)\cdot \sigma\quad \text{and} \quad \tau \cdot a=\phi^\tau(a)\cdot\tau. 
 \]
 
 It is helpful to write
 \[
 \cR_0:=\ve{I}_0\cdot \cK\cdot \ve{I}_0\quad \text{and} \quad \cR_1:=\ve{I}_1\cdot \cK\cdot \ve{I}_1. 
 \]

\subsection{The Koszul dual algebra $\cK^!$}

We now recall the Koszul dual algebra $\cK^!$, described by the second author in \cite{ZemKoszul}. Similar to $\cK$, this $\cK^!$ is an algebra over the idempotent ring $\ve{I}=\ve{I}_0\oplus \ve{I}_1$. In contrast to $\cK$,  $\cK^!$ is a curved $dg$-algebra.

We declare
\[
\ve{I}_0\cdot \cK^!\cdot \ve{I}_0=\frac{\bF\langle w,z,\theta\rangle}{w^2=z^2=[w,\theta]=[z,\theta]=\theta^2=0}.
\]
In the above, $\bF\langle w,z,\theta\rangle$ denotes the free, noncommutative algebra on the symbols $w$, $z$ and $\theta$.

We declare $\ve{I}_1\cdot \cK^!\cdot \ve{I}_0=0$.

Next, we set
\[
\ve{I}_1\cdot \cK^! \cdot \ve{I}_1=\frac{\bF\langle \varphi_+,\varphi_-,\theta\rangle}{\varphi_+^2=\varphi_-^2=[\theta,\varphi_\pm]=\theta^2=0}.
\]

We define $\ve{I}_0\cdot \cK^!\cdot \ve{I}_1$ to be generated as a bimodule over $(\ve{I}_0\cdot \cK^!\cdot \ve{I}_0, \ve{I}_1\cdot \cK^!\cdot \ve{I}_1)$ by two algebra elements $s$ and $t$. We define the product of two monomials to either be zero or be given by concatenation, subject to the following relations:
\[
ws=s\varphi_-=0,\quad zt=t\varphi_+=0,\quad s\theta=\theta s,\quad zs=s\varphi_+,\quad t\theta=\theta t,\quad \text{and} \quad wt=t\varphi_-.
\]
As a vector space, we have
\[
\ve{I}_0\cdot \cK^!\cdot \ve{I}_1=
\Span_{\bF}(s,s\theta,zs,zs\theta,t,t\theta,wt,wt\theta).
\]

Additionally, the algebra $\cK^!$ has a differential $\mu_1$, which is supported only in idempotent $(0,0)$, and which is given by
\[
\mu_1(\theta)=wz+zw,
\]
extended to other monomials via the Leibniz rule. Finally, $\cK^!$ has a curvature term, supported in idempotent $(1,1)$, given by
\[
\mu_0=\varphi_-\varphi_++\varphi_+\varphi_-.
\]

It is convenient to write
\[
\cR_0^!:=\ve{I}_0\cdot \cK^!\cdot \ve{I}_0\quad \text{and} \quad \cR_1^!:=\ve{I}_1\cdot \cK^!\cdot \ve{I}_1. 
\]

In \cite{ZemKoszul}, the second author constructs rank 1 dualizing bimodules ${}^{\cK^!} [\cCo]^{\cK}$ and ${}_{\cK}[\cTr]_{\cK^!}$, which give equivalences of categories between suitable categories of modules over $\cK$ and over $\cK^!$. The bimodule ${}^{\cK^!} [\cCo]^{\cK}$ is given by the following diagram:
\[
\begin{tikzcd}[labels=description, row sep=2cm]
i_0  \ar[loop left,looseness=20, "w|W"] \ar[loop above,looseness=20, "\theta|U"] \ar[loop right,looseness=20, "z|Z"]
\ar[d, bend left, "s|\sigma"]
\ar[d, bend right, "t|\tau"]
\\
i_1 \ar[loop left,looseness=20, "\varphi_-|T^{-1}"] \ar[loop below,looseness=20, "\theta|U"] \ar[loop right,looseness=20, "\varphi_+|T"]
\end{tikzcd}
\]
The bimodule ${}_{\cK} [\cTr]_{\cK^!}$ is more complicated, and more details can be found in \cite{ZemKoszul}.

We will define the \emph{tensor weight} (or just \emph{weight}) of a monomial in $\cK^!$ to be the number of letters. For example
\[
\wt(wzw)=3\quad \wt(s)=1\quad \wt(wzwz\theta)=5.
\]
It is convenient to complete $\cK^!$ over the weight filtration. For our purposes, it is actually more convenient to complete just $\cR_0^!$ under this weight filtration. We write $\ve{\cK}^!$ for the algebra with $\cR_0^!$ completed in this manner. We write 
\[
\bm{\cR}_0^!:=\ve{I}_0\cdot \ve{\cK}^!\cdot \ve{I}_0.
\]
\begin{rem}
Given a type-$A$ module ${}_{\cK} \cX$, one can tensor with the cotrace bimodule ${}^{\cK^!} [\cCo]^{\cK}$ to obtain a type-$D$ module ${}^{\cK^!} \cX$. The reader should think of the structure map $\delta^1$ on ${}^{\cK^!} \cX$ as encoding the following information. The $w$ weighted component of $\delta^1$ encodes the map $m_2(W,-)$. The $\theta$-weighted component encodes $m_2(U,-)$. More generally, if $a\in \cK^!$ is a monomial in $\cK^!$ with tensor weight $n$, then the $a$ component of $\delta^1$ is equal to the sum of 
$m_{n+1}(a_1^!,\dots, a_n^!,-)$, ranging over all sequences $a_n,\dots, a_1\in \{w,z,\theta,t,\tau,\varphi_-,\varphi_+\}$ such that $a_n\cdots a_1=a$. Here $a_i^!\in \cR_0$ denotes the dual element to $a_i\in \cR_0^!$. (E.g. if $a_i=w$ then $a_i^!=W$, and so forth). We think of the type-$D$ structure ${}^{\cK^!} \cX$ as encoding just a small collection of the actions of the $A_\infty$-module ${}_{\cK} \cX$, from which the entire module ${}_{\cK} \cX$ can be recovered up to homotopy equivalence.
\end{rem}

As discussed in the introduction, the following definition is important for our purposes:

\begin{define}
\,
\begin{enumerate}
\item  If ${}_{\cK} \cX^{\cK}$ is a $DA$-bimodule, we say that $\cX$ is \emph{$U$-equivariant} if the structure maps satisfy
\[
\delta_{n+1}^1(a_n,\dots, Ua_i,\dots, a_1,\xs)=\delta_{n+1}^1(a_n,\dots, a_1,\xs)\cdot U,
\]
where $\cdot U$ means to multiply the $\cK$ factor of $\delta_{n+1}(a_n,\dots, a_1,\xs)$ by $U$. A morphism of $DA$-bimodules is called $U$-equivariant if the analogous condition is satisfied. 
\item If ${}^{\cK^!} \cX^{\cK}$ is a $DD$-bimodule, we say that $\cX$ is \emph{$U$-equivariant} if the structure map $\delta^{1,1}$ has a component of the form $\theta\otimes \id\otimes U$ (i.e. a self arrow on each generator weighted by $\theta|U$), and has no other components weighted by a multiple of $\theta$. We say a morphism of $DD$-bimodules over $(\cK^!,\cK)$ is \emph{$U$-equivariant} if there are no components weighted by a multiple of $\theta$. 
\end{enumerate} 
\label{def:U equivariant modules}
\end{define}

It is straightforward to see that if ${}_{\cK} \cX^{\cK}$ has $U$-equivariant structure maps and is strictly unital, then ${}^{\cK^!} [\cTr]^{\cK}\boxtimes {}_{\cK} \cX^{\cK}$ also has $U$-equivariant structure maps. Similarly, if $f_*^1$ is a strictly unital and $U$-equivariant morphism of $DA$-bimodules, then $\bI_{[\cCo]}\boxtimes f_*^1$ has no $\theta$-weighted terms, and hence is $U$-equivariant in the above sense.

 Slightly less obviously, it is also the case that if ${}^{\cK^!} \cY^{\cK}$ has $U$-equivariant structure maps (in the above sense), then ${}_{\cK} [\cTr]_{\cK^!} \boxtimes {}^{\cK^!} \cY^{\cK}$ is strictly unital and has $U$-equivariant structure maps. See \cite{ZemKoszul}*{Corollary~5.8}.  

\section{Staircase complexes}

An important component of the proofs from \cite{CZZSatellites} was an analysis of staircase complexes, as well as morphisms between staircase complexes.

\begin{define} A \emph{staircase complex} $\cC$ is a free chain complex over $\bF[W,Z]$ with generators $\xs_0,\xs_1,\dots, \xs_{2n}$ such that $\d(\xs_{2i})=0$ and
\[
\d(\xs_{2i+1})=\xs_{2i}W^{\a_i}+\xs_{2i+2}Z^{\b_i},
\]
for some $\a_i,\b_i>0$.
\end{define}


If $\cC=(\d\colon \cC_1\to \cC_0)$ is a staircase complex, we define the \emph{algebraic grading} on $\cC$ to be $1$ on $\cC_1$ and $0$ on $\cC_0$. 

Another perspective on staircase complexes is that they are free-resolutions of their homology. That is, if $\cC=(\d\colon \cC_1\to \cC_0)$ is a staircase complex, then the following sequence is exact:
\[
\begin{tikzcd}
0\ar[r]&\cC_0\ar[r, "\d"] &\cC_1 \ar[r, twoheadrightarrow] & H_*(\cC)\ar[r] &0.
\end{tikzcd}
\]

\begin{lem} 
\label{lem:basic-staircase-lemma}
Suppose that $\cA$ and $\cB$ are two staircase complexes and $F\colon H_*(\cA)\to H_*(\cB)$ is a homogeneously graded $\bF[W,Z]$-equivariant map. Then there is a chain map $f\colon \cA\to \cB$ which induces $F$ on homology and preserves the algebraic grading. Furthermore, the map $f$ is unique up to chain homotopy. In particular, if $f,g\colon \cA\to \cB$ are two homogeneously graded maps which preserve the algebraic grading and which induce the same map on homology, then $f$ and $g$ are chain homotopic. 
\end{lem}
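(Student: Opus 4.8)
The plan is to recognize the statement as the graded form of the classical comparison (lifting) theorem for projective resolutions. As recorded just above, a staircase complex $\cC=(\d\colon \cC_1\to \cC_0)$ is a length-one free resolution of the graded $\bF[W,Z]$-module $M_\cC:=H_*(\cC)$: the sequence $0\to \cC_1\xrightarrow{\d}\cC_0\xrightarrow{\pi_\cC}M_\cC\to 0$ is exact, $\cC_1$ is free on the odd generators, $\cC_0$ is free on the even generators, and $\d$ and $\pi_\cC$ are homogeneous both for the algebraic grading and for the internal $(\gr_{\ws},\gr_{\zs})$-bigrading (equivalently the $\bF[W,Z]$-grading). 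In particular $H_*$ of a staircase is concentrated in algebraic degree $0$, and $F$ is by hypothesis homogeneous of some fixed degree for the internal bigrading.

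First I would build $f_0\colon \cA_0\to \cB_0$. Since $\pi_\cB$ is a surjection of graded modules, every homogeneous element of $M_\cB$ has a homogeneous preimage in $\cB_0$ of the same degree; so for each even generator $\xs_{2i}$ of $\cA$ I choose a homogeneous lift in $\cB_0$ of $F(\pi_\cA(\xs_{2i}))$ and extend $\bF[W,Z]$-linearly, obtaining a homogeneous, algebraic-grading-preserving $f_0$ with $\pi_\cB\circ f_0=F\circ \pi_\cA$. Then I would build $f_1\colon \cA_1\to \cB_1$: since $\pi_\cB\circ f_0\circ \d_\cA=F\circ \pi_\cA\circ \d_\cA=0$, the map $f_0\circ \d_\cA$ takes values in $\ker\pi_\cB=\im\d_\cB$; because $\d_\cB$ is injective, each odd generator $\xs_{2i+1}$ of $\cA$ has a unique, and necessarily homogeneous, preimage under $\d_\cB$ of $f_0(\d_\cA(\xs_{2i+1}))$, and declaring $f_1$ to be this preimage on generators gives the unique $\bF[W,Z]$-linear map with $\d_\cB\circ f_1=f_0\circ \d_\cA$. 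Thus $f=(f_0,f_1)$ is a chain map, homogeneous, algebraic-grading-preserving, and inducing $F$ on homology.

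For uniqueness up to homotopy, suppose $f=(f_0,f_1)$ and $g=(g_0,g_1)$ are two such maps inducing the same map on $H_*$. Then $\pi_\cB\circ(f_0-g_0)=0$, so $f_0-g_0$ takes values in $\im\d_\cB$; choosing for each even generator of $\cA$ a homogeneous $\d_\cB$-preimage of its image under $f_0-g_0$ defines a homogeneous $s\colon \cA_0\to \cB_1$ with $\d_\cB\circ s=f_0-g_0$. Applying injectivity of $\d_\cB$ once more to the identity $\d_\cB\circ(f_1-g_1)=(f_0-g_0)\circ \d_\cA=\d_\cB\circ s\circ \d_\cA$ yields $f_1-g_1=s\circ \d_\cA$, so $s$ is a chain homotopy from $g$ to $f$. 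This simultaneously gives the uniqueness clause and the final ``in particular'' sentence of the statement.

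I do not anticipate a serious obstacle: this is the standard lifting argument, and the only point that needs attention is that every choice — the preimages under the surjection $\pi_\cB$ and under the injection $\d_\cB$ — can be, and should be, taken homogeneously for the internal bigrading, which is possible precisely because the staircase resolution is exact and graded-free with $\d$ injective. For that reason I would present the short explicit argument above rather than invoking a black-box comparison theorem, so that compatibility with the $(\gr_{\ws},\gr_{\zs})$-grading is visibly built in.
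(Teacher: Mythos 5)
Your proposal is correct and is essentially the paper's argument: the paper simply observes that staircase complexes are free resolutions of their homology and cites the comparison theorem for projective resolutions (Weibel, Theorem 2.2.6), while you spell out that standard lifting-and-homotopy argument explicitly, with the (correct) extra care that all choices of preimages can be taken homogeneously for the $(\gr_{\ws},\gr_{\zs})$-grading.
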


Since $\cA$ and $\cB$ are free resolutions of their homology, the above result follows from basic homological algebra. (See \cite{Weibel}*{Theorem 2.2.6}).

We will have use for some slightly less obvious results about staircase complexes:

\begin{lem}[\cite{CZZSatellites}*{Lemma~5.19}]
\label{lem:ext-computation} If $\cS$ is a staircase complex and $f\colon \cS\to \cS$ is a chain map which shifts the $(\gr_{\ws},\gr_{\zs})$-grading by $(-1,-1)$, then $f$ is null-homotopic.
\end{lem}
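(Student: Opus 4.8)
The plan is to reduce the statement to a computation of $\Ext^1$ over $\bF[W,Z]$ between the homology modules, using the fact (recorded just above) that a staircase complex is a free resolution of its homology in the range of algebraic gradings $0,1$. Concretely, write $\cS = (\d\colon \cS_1\to \cS_0)$ and let $M = H_*(\cS) = \coker(\d)$, a monomial-ideal-type module. Since $\cS$ is a two-step free resolution of $M$ concentrated in algebraic gradings $0$ and $1$, chain maps $\cS\to \cS$ that preserve the algebraic grading are exactly chain maps of resolutions covering an $\bF[W,Z]$-module endomorphism of $M$, and these are classified up to chain homotopy by $\Hom_{\bF[W,Z]}(M,M)$ together with $\Ext^1_{\bF[W,Z]}(M,M)$ in an appropriate grading. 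The first observation I would make is that a chain map $f$ which shifts $(\gr_{\ws},\gr_{\zs})$ by $(-1,-1)$ — equivalently, which is homogeneous of internal degree given by multiplication by $U = WZ$ — cannot, for degree reasons, induce the identity or any nonzero scalar on $M$; rather, the induced map $F$ on $H_*(\cS)$ is a degree-$(-1,-1)$ endomorphism of $M$.

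The key step is then to show two things: first, that every degree-$(-1,-1)$ graded $\bF[W,Z]$-module endomorphism $F$ of $M$ vanishes; and second, that the relevant piece of $\Ext^1_{\bF[W,Z]}(M,M)$ in degree $(-1,-1)$ also vanishes, so that $f$ inducing the zero map on homology forces $f$ to be null-homotopic (this second part is exactly Lemma~\ref{lem:basic-staircase-lemma} applied with $F=0$, giving uniqueness up to homotopy, hence $f\simeq 0$). So the real content is the first claim. For this I would argue directly on the staircase: $M$ is generated over $\bF[W,Z]$ by the image of $\xs_0$ (or, depending on conventions, by a single generator in the top algebraic grading), with relations $W^{\a_i}$-torsion and $Z^{\b_i}$-torsion patterns determined by the exponents $\a_i,\b_i>0$; equivalently $M\cong \bF[W,Z]/I$ for a monomial ideal $I$, up to a grading shift. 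Any module endomorphism of such an $M$ is multiplication by an element of $M$ itself (the image of $1$), so $\Hom_{\bF[W,Z]}(M,M)\cong M$ as graded modules; a degree-$(-1,-1)$ endomorphism corresponds to an element of $M$ of degree $(-1,-1)$ relative to the generator. Since $M$ embeds in $\bF[W,Z]$ as a monomial ideal with all generators in nonpositive $(\gr_{\ws},\gr_{\zs})$-bidegree and, crucially, no element of bidegree $(-1,-1)$ below the cyclic generator (the only monomials of the right total degree are $W$, $Z$, and these sit in bidegrees $(-2,0)$ and $(0,-2)$, not $(-1,-1)$ — only $WZ$ has bidegree $(-2,-2)$), there is simply no room: $M_{(-1,-1)}=0$. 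Hence $F=0$.

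Once $F=0$ on homology, Lemma~\ref{lem:basic-staircase-lemma} (homological algebra for maps between free resolutions, \cite{Weibel}*{Theorem~2.2.6}) gives that $f$ is chain homotopic to the zero map, which is the claim. The main obstacle I anticipate is bookkeeping the gradings carefully: one must be sure that "shifts $(\gr_{\ws},\gr_{\zs})$ by $(-1,-1)$'' really does preclude $F$ from having degree $0$ (so it is not the identity-type map, which of course is not null-homotopic), and one must confirm that the ambient bigrading on $\bF[W,Z]$, in which $W$ has bidegree $(-2,0)$ and $Z$ has bidegree $(0,-2)$, makes the bidegree $(-1,-1)$ genuinely unattainable by any homogeneous element; the parity argument ($\gr_{\ws}$ and $\gr_{\zs}$ of every monomial are even) is what does this cleanly. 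A secondary point to check is that the identification $\Hom_{\bF[W,Z]}(M,M)\cong M$ holds in the graded sense for these particular cyclic modules $M$, which follows since $M$ is cyclic; if one prefers to avoid even that, one can instead run the argument cell-by-cell on the staircase, noting that $f(\xs_{2i})$ and $f(\xs_{2i+1})$ must be combinations of the $\xs_j$ with coefficients of total degree $-2$ (i.e. multiples of $U$, $W^2$, $Z^2$, or $WZ$), and then solving the chain-map and chain-homotopy equations directly — but the $\Ext$/resolution viewpoint is cleaner and is the one already set up in the surrounding text.
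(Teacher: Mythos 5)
There is a genuine gap, and it sits exactly where you declare the work to be finished. A chain map $f\colon \cS\to \cS$ of bidegree $(-1,-1)$ cannot preserve the algebraic grading: since the differential of a staircase has bidegree $(-1,-1)$, the generators of $\cS_0$ and $\cS_1$ lie in opposite parity classes of $(\gr_{\ws},\gr_{\zs})$, so $f$ necessarily shifts the algebraic grading by one, and (using injectivity of $\d$) its only nonzero component is a map $\cS_1\to \cS_0$. Two consequences. First, your "key step'' that the induced map on homology vanishes is automatic and carries no content: $f$ sends the cycles $\cS_0$ into $\cS_1$, where the only cycle is $0$ (your parity argument also gives this, although the intermediate claims that $H_*(\cS)$ is cyclic and $\Hom_{\bF[W,Z]}(M,M)\iso M$ are false --- staircase homology is a monomial ideal on $n+1$ generators). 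Second, and crucially, the conclusion "zero on homology, hence null-homotopic by Lemma~\ref{lem:basic-staircase-lemma}'' is not valid: that lemma (the comparison theorem, \cite{Weibel}*{Theorem 2.2.6}) gives uniqueness up to homotopy only for maps which \emph{preserve} the algebraic grading, whereas $f$ lowers it. Chain maps $\cS_1\to\cS_0$ modulo the homotopies $\d h_1+h_0\d$ compute $\Ext^1_{\bF[W,Z]}(H_*(\cS),H_*(\cS))$, which is nonzero in general (this is exactly the point of the remark following Lemma~\ref{lem:ext-computation}); so "homology-trivial implies null-homotopic'' fails for degree-shifting maps, and no parity argument can rescue it, since bidegree-$(-1,-1)$ maps $\cS_1\to\cS_0$ exist in abundance (the maps $L_{wz}$, $L_{zs}$ used throughout the paper are of this type).

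What is actually needed, and what your proposal never supplies, is the vanishing of the bidegree-$(-1,-1)$ piece of this $\Ext^1$ group, and that requires an argument using the staircase shape. This is what the paper's proof of the more general Lemma~\ref{lem:generalized-Ext-computation} does: after reducing to the component $\phi_{01}\colon \cS_1\to \cS_0$, it constructs the null-homotopy by induction along the staircase generators $\xs_0,\xs_1,\dots$, splitting into the cases where $\phi(\xs_i)$ is a boundary (absorb it by a homotopy through $\cS_1$) and where it is nonzero on homology (correct it using a grading-preserving map $I$ that is nonzero on homology --- for the self-map case, $I=\id$ --- together with the grading constraint coming from $\d(\xs_i)=\xs_{i-1}W^n+\xs_{i+1}Z^m$). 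If you want to keep the $\Ext$-theoretic framing, you would instead need to compute directly that $\Ext^1_{\bF[W,Z]}(M,M)$ vanishes in internal bidegree $(-1,-1)$ for staircase modules $M$; either way, this step is the entire content of the lemma and must be argued, not cited from Lemma~\ref{lem:basic-staircase-lemma}.
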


\begin{rem} If we write $\cS=\Cone(\d\colon \cS_1\to \cS_0)$, then an endomorphism $\phi$ of $\cS$ with $(\gr_{\ws},\gr_{\zs})(\phi)=(-1,-1)$ can be written as a sum $\phi=\phi_{10}+\phi_{01}$ where $\phi_{ij}$ maps $\cS_j$ to $\cS_i$. If $\phi$ is a chain map, then it is easy to see that $\phi_{10}=0$, because $\d$ is injective as a map from $\cS_1$ to $\cS_0$. The component $\phi_{01}$ naturally determines an element in the group $\Ext^1_{\cR_0}(\cS,\cS)$. The above lemma can be interpreted as the vanishing of the subspace of this $\Ext$ group in $(\gr_{\ws},\gr_{\zs})$-grading $(-1,-1)$. 
\end{rem}
 
 Lemma~\ref{lem:ext-computation} follows from a slightly more general result, which we will periodically find useful:
 
 \begin{lem}\label{lem:generalized-Ext-computation} Suppose that $\cA$ and $\cB$ are staircase complexes with absolute $(\gr_{\ws},\gr_{\zs})$-gradings and such that there is a grading preserving chain map $I\colon \cA\to \cB$ which is non-zero on homology. If $\phi\colon \cA\to \cB$ is a chain map of grading $(-1,-1)$, then $\phi$ is null-homotopic.
 \end{lem}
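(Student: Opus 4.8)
The plan is to reduce Lemma~\ref{lem:generalized-Ext-computation} to a direct computation using the staircase structure of $\cA$ and $\cB$, exactly as in the remark following Lemma~\ref{lem:ext-computation}. Write $\cA=\Cone(\d_\cA\colon \cA_1\to \cA_0)$ and $\cB=\Cone(\d_\cB\colon \cB_1\to \cB_0)$ using the algebraic grading, and decompose $\phi=\phi_{00}+\phi_{10}+\phi_{01}+\phi_{11}$ according to source and target algebraic gradings. Since $\phi$ lowers $(\gr_\ws,\gr_\zs)$ by $(-1,-1)$ and the algebraic grading interacts with the Maslov gradings in a fixed way on a staircase (the odd generators sit one Maslov step above where the differential lands), the component $\phi_{10}\colon \cA_0\to \cB_1$ is forced to vanish for grading reasons, and likewise I expect $\phi_{00}$ and $\phi_{11}$ to be essentially pinned down (up to the freedom of composing with $I$ restricted to each algebraic degree) — the only genuinely free piece is $\phi_{01}\colon \cA_1\to \cB_0$, which represents a class in $\Ext^1_{\cR_0}(H_*(\cA),H_*(\cB))$ in the relevant bigrading.

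First I would set up the chain-map equation $\d_\cB\phi=\phi\d_\cA$ and read off what it says component by component; the key relation will express $\phi_{01}$ in terms of the other components together with a choice of chain homotopy, so that the obstruction to null-homotoping $\phi$ lives entirely in the cokernel of the map $\Hom(\cA_1,\cB_1)\oplus\Hom(\cA_0,\cB_0)\to\Hom(\cA_1,\cB_0)$ induced by $\d_\cA$ and $\d_\cB$. Then I would compute this $\Ext^1$ group in the $(-1,-1)$ bigrading directly from the monomial presentations: $H_*(\cA)$ and $H_*(\cB)$ embed in $\bF[W,Z]$ as monomial ideals (as recalled in Section~\ref{sec:L-space-links-background}), and a free resolution of each is given by the two-term staircase itself, so $\Ext^1_{\cR_0}(H_*(\cA),H_*(\cB))$ is the cohomology of $\Hom_{\cR_0}(\cA_\bullet,\cB_0)$. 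A weight/degree bookkeeping — using that all generators of a staircase have $\gr_\ws\le 0$, $\gr_\zs\le 0$, and the steps $\a_i,\b_i>0$ — should show there is no room for a nonzero class in bidegree $(-1,-1)$ relative to the grading-preserving map $I$, i.e. every cocycle in that bidegree is a coboundary.

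The key leverage of the hypothesis that there exists a grading-preserving chain map $I\colon\cA\to\cB$ nonzero on homology is that it rigidifies the situation: it guarantees the absolute gradings on $\cA$ and $\cB$ are ``aligned'' so that the Maslov bigrading of $\phi$ being $(-1,-1)$ really does land in the vanishing range, rather than in some shifted range where an exotic class could appear. Concretely, $I$ forces the bottom generators $\xs_0^\cA\mapsto\xs_0^\cB$ (up to a unit times a monomial of bidegree $(0,0)$), which fixes the normalization; then the staircase shape constrains all other generators' gradings, and one checks the $(-1,-1)$ strand of $\Ext^1$ is zero. Once Lemma~\ref{lem:generalized-Ext-computation} is established in this form, Lemma~\ref{lem:ext-computation} is the special case $\cA=\cB$, $I=\id$.

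I expect the main obstacle to be the grading bookkeeping in the $\Ext^1$ computation: one must carefully track how the internal $(\gr_\ws,\gr_\zs)$-bidegrees of the staircase generators of $\cA$ and $\cB$ relate (using $I$ to fix the offset), and then verify that no pair $(\xs_{2i+1}^\cA,\xs_{2j}^\cB)$ admits an $\cR_0$-linear map of bidegree $(-1,-1)$ that is not hit by the differential. This is elementary but fiddly, and it is where the positivity $\a_i,\b_i>0$ and the nonpositivity of the generator gradings get used in an essential way. Everything else — the vanishing of $\phi_{10}$, the translation into $\Ext$, and the assembly of the chain homotopy — is formal homological algebra of two-term complexes.
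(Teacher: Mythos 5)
Your reformulation — discard $\phi_{10}$, and view the remaining component $\phi_{01}\colon \cA_1\to\cB_0$ as a class in the $(-1,-1)$-graded piece of $\Ext^1_{\cR_0}(H_*(\cA),H_*(\cB))$, i.e.\ in the cokernel of $\Hom(\cA_1,\cB_1)\oplus\Hom(\cA_0,\cB_0)\to\Hom(\cA_1,\cB_0)$ — is correct, and it is exactly the point of view of the remark following Lemma~\ref{lem:ext-computation}. But it is only a reformulation: the entire content of the lemma is the vanishing of that graded piece, and your proposal does not prove it. The mechanism you suggest (``weight/degree bookkeeping should show there is no room for a nonzero class,'' checked pair by pair on generators $(\xs^{\cA}_{2i+1},\xs^{\cB}_{2j})$) cannot work as stated: nonzero degree-$(-1,-1)$ cochains $\cA_1\to\cB_0$ exist in abundance (already $\xs_1\mapsto \xs_0W^{\a_0}$ for any staircase), so the issue is not the absence of cochains but the surjectivity of the coboundary map onto them, and producing preimages genuinely uses $I$ rather than just the grading alignment it provides. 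This is what the paper's proof does: induct on the smallest generator $\xs_i$ with $\phi(\xs_i)\neq 0$; if $\phi(\xs_i)$ is a boundary, kill it by a homotopy supported on $\xs_i$; if $[\phi(\xs_i)]\neq 0$, write $\d\xs_i=\xs_{i-1}W^n+\xs_{i+1}Z^m$ and subtract $\d$ of the homotopy $\xs_{i+1}\mapsto I(\xs_{i+1})Z^m$, using that $H_*(\cB)$ has rank at most one in each $(\gr_{\ws},\gr_{\zs})$-bigrading to see that the new value of $\phi$ on $\xs_i$ is a boundary. Your proposal contains no substitute for this step, so the key verification is missing.

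There are also smaller inaccuracies you would need to repair even within your framework. The components $\phi_{00}$ and $\phi_{11}$ are not ``pinned down up to composing with $I$'': they vanish identically, because the algebraic grading $0$ and $1$ parts of a staircase lie in opposite parities of $(\gr_{\ws},\gr_{\zs})$ and $\phi$ shifts both gradings by an odd amount. The component $\phi_{10}$ does not vanish ``for grading reasons'' — degree-$(-1,-1)$ linear maps $\cA_0\to\cB_1$ exist (e.g.\ $\xs_0\mapsto \xs_1Z$ in the $T_{2,3}$ staircase) — it vanishes because $\d_{\cB}\phi_{10}(x)=\phi(\d_{\cA}x)=0$ for $x\in\cA_0$ and $\d_{\cB}$ is injective. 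Finally, $I$ need not send $\xs_0^{\cA}$ to $\xs_0^{\cB}$ up to a bidegree-$(0,0)$ monomial: if $\cA$ is the $T_{2,3}$ staircase and $\cB$ the unknot's, the grading-preserving chain map which is nonzero on homology satisfies $I(\xs_0^{\cA})=\xs_0^{\cB}Z$. So the normalization you propose to extract from $I$ is not available in general, which is another indication that the bookkeeping you defer is where the actual argument lives.
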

 \begin{proof} The proof is essentially the same as \cite{CZZSatellites}*{Lemma~5.19}.  Write $\cA$ as $\Cone(\d_{\cA}\colon \cA_1\to \cA_0)$ and $\cB$ as $\Cone(\d_{\cB}\colon \cB_1\to \cB_0)$. The map $\phi$ can be written as $\phi=\phi_{01}+\phi_{10}$, where $\phi_{ij}$ maps $\cA_j$ to $\cB_i$. Since $\d_{\cA}$ and $\d_{\cB}$ are injective, it is easy to see that $\phi_{10}$ is zero, so it suffices to show that $\phi=\phi_{01}$ is null-homotopic.
 
 Let $\xs_0,\xs_2,\dots, \xs_{2n}$ be the generators for $\cA_0$, and $\xs_1,\xs_3,\dots, \xs_{2n-1}$ be the generators for $\cA_1$. Let $i$ be the minimal index such that $\phi$ is non-zero on $\xs_i$. The proof is by induction on $i$. 
 
We consider two cases: $\phi(\xs_i)$ is zero in homology, and $\phi(\xs_i)$ is non-zero in homology.  In the first case, if $\phi(\xs_i)=\d(\ys)$ for some $\ys\in \cB_1$, then we define a homotopy $j$ which sends $\xs_i$ to $\ys$. The map $\phi+\d(j)$ vanishes on $\xs_0,\dots, \xs_i,\xs_{i+1}$ (i.e. we have increased the minimal index on which $\phi$ is non-vanishing by 2). We now consider the case that $\phi(\xs_i)$ is non-zero in homology. Write $\d(\xs_i)=\xs_{i-1} W^n+\xs_{i+1} Z^m$. Now $\gr_{\ws}(\xs_{i+1}Z^{m})$ has the same grading as $\phi(\xs_{i})$. We define a map $j\colon \cA\to \cB$ which sends $\xs_{i+1}$ to $I(\xs_{i+1})Z^m$. Now $\phi+\d(j)$ vanishes on $\xs_0,\dots, \xs_{i-1}$, and now sends $\xs_{i}$ to a boundary. Using the previous argument, we may modify $\phi$ by a chain homotopy so that it vanishes on $\xs_0,\dots, \xs_{i+1}$. By induction, the proof is complete.
 \end{proof}

Another helpful lemma is the following:

\begin{lem}
\label{lem:grading-preserving-endo=id} Suppose that $\cS$ is a staircase complex and $f\colon \cS\to \cS$ is a grading preserving chain homotopy equivalence. Then $f=\id$.
\end{lem}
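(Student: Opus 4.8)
The plan is to use the structural results about staircase complexes already established, in particular Lemma~\ref{lem:basic-staircase-lemma} and Lemma~\ref{lem:ext-computation} (equivalently Lemma~\ref{lem:generalized-Ext-computation}). Since $f\colon \cS\to \cS$ is a grading-preserving chain homotopy equivalence, it induces an isomorphism $F=f_*$ on $H_*(\cS)$ which is itself grading-preserving. The first step is to understand $F$: because $\cS$ is a staircase, $H_*(\cS)$ is cyclic as an $\bF[W,Z]$-module (generated by the image of $\xs_{2n}$, or whichever end generator one prefers), and it embeds in $\bF[W,Z]$ as a monomial ideal (as recalled in Section~\ref{sec:L-space-links-background}). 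A grading-preserving $\bF[W,Z]$-equivariant endomorphism of such a module is multiplication by a scalar in $\bF=\Z/2$; since $F$ is an isomorphism, that scalar is $1$, so $F=\id$ on homology.

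The second step is to upgrade this to the chain level. Both $\id_{\cS}$ and $f$ are grading-preserving chain maps $\cS\to \cS$ inducing the same map on homology, so by the uniqueness clause of Lemma~\ref{lem:basic-staircase-lemma} they are chain homotopic: $f=\id_{\cS}+\d(j)+j\circ \d$ for some homotopy $j$ of the appropriate grading. Writing $\cS=\Cone(\d\colon \cS_1\to \cS_0)$, the difference $\phi:=f-\id_{\cS}$ is a chain map which is null-homotopic, but we want the stronger conclusion that $\phi$ is actually $0$, not merely null-homotopic. This is where one must use the grading rigidity more carefully: $\phi$ preserves the $(\gr_{\ws},\gr_{\zs})$-grading, and I claim a grading-preserving null-homotopic endomorphism of a staircase must vanish identically. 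Indeed, a grading-preserving homotopy $j$ would have to shift $(\gr_{\ws},\gr_{\zs})$ by $(+1,+1)$ (since $\d$ has degree $(-1,-1)$... wait, in this knot-Floer convention $\d$ preserves no such shift — one checks from the staircase differential $\d(\xs_{2i+1})=\xs_{2i}W^{\a_i}+\xs_{2i+2}Z^{\b_i}$ that moving from $\cS_1$ to $\cS_0$ along $\d$ lowers $\gr_{\ws}+\gr_{\zs}$), so a grading-preserving $j$ would have to raise $\gr_{\ws}+\gr_{\zs}$; but every generator of $\cS$ has $\gr_{\ws}\le 0$ and $\gr_{\zs}\le 0$, and $j$ must land in the $\bF[W,Z]$-span of the generators, forcing $j=0$ by a degree count, hence $\phi=\d(j)=0$.

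The main obstacle is making that last degree-count argument airtight, i.e. verifying that there is genuinely no room for a nonzero grading-preserving homotopy. The cleanest way to do this is probably to avoid the ad hoc count and instead argue directly: having established $f_*=\id$ on homology, run the inductive argument in the proof of Lemma~\ref{lem:generalized-Ext-computation} (with $\cA=\cB=\cS$, $\phi=f-\id$, and $I=\id$) to replace $f-\id$ by a chain-homotopic map vanishing on more and more generators; but since $f-\id$ is \emph{grading-preserving} rather than of degree $(-1,-1)$, each homotopy correction introduced is forced to be zero by the grading constraint (the would-be homotopy target $I(\xs_{i+1})Z^m$ etc.\ sits in the wrong grading), so in fact $f-\id$ already vanishes on every $\xs_i$ by induction on the minimal index. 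Either route concludes that $f=\id$. The delicate point in all versions is bookkeeping the gradings of the homotopy and checking the inequalities $\gr_{\ws}\le 0$, $\gr_{\zs}\le 0$ on generators rule out any nonzero correction term.
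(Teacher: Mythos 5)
Your skeleton (first pin down the induced map on homology, then upgrade to the chain level) is reasonable, and step one reaches the right conclusion despite a misstatement: $H_*(\cS)$ is \emph{not} cyclic in general (for the trefoil staircase it is the ideal $(W,Z)\subset \bF[W,Z]$, which is not principal), but since it embeds as a monomial ideal it has dimension at most one over $\bF$ in each $(\gr_{\ws},\gr_{\zs})$-bigrading, so any bigrading-preserving isomorphism is forced to be the identity; this is exactly the justification the paper uses. Note also that before invoking Lemma~\ref{lem:basic-staircase-lemma} you must observe that $f$ preserves the algebraic grading (it does, by the parity of $\gr_{\ws}$ on $\cS_0$ versus $\cS_1$), since the uniqueness clause of that lemma is stated only for such maps.

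The genuine gap is in your chain-level step. After getting $f\simeq \id$, you need: a bigrading-preserving null-homotopic endomorphism of a staircase vanishes, equivalently that a homogeneous null-homotopy $j$, which necessarily has bidegree $(+1,+1)$, is zero. Your stated reason --- that all generators satisfy $\gr_{\ws}\le 0$ and $\gr_{\zs}\le 0$ --- does not work: that normalization is no part of the abstract definition of a staircase complex, and even when it holds it does not by itself rule out a degree-$(1,1)$ map (nothing about non-positivity prevents a generator in bidegree $(-4,-4)$ from hitting one in bidegree $(-3,-3)$; what actually prevents it is the staircase monotonicity, $\gr_{\ws}$ strictly decreasing and $\gr_{\zs}$ strictly increasing along the generators). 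Your fallback of rerunning the induction of Lemma~\ref{lem:generalized-Ext-computation} for a degree-$(0,0)$ map is not carried out, and the corrections used there (e.g.\ by $I(\xs_{i+1})Z^m$) are specific to degree $(-1,-1)$, so as written it is not a proof. The fix is already in the paper: Lemma~\ref{lem:staircase-no-maps-positive bidegree} with $n=1$ says any endomorphism of bidegree $(1,1)$ (chain map or not) is zero, so the degree-$(1,1)$ homogeneous part of the null-homotopy vanishes and $f+\id=0$. Be aware, though, that the paper's own proof is shorter and bypasses homotopies entirely: since $f$ preserves the bigrading and each $\xs_{2i}$ is the unique nonzero homogeneous element of $\cS$ in its bidegree, $f|_{\cS_0}=\id$; then $\d\circ (f|_{\cS_1}+\id)=0$, and injectivity of $\d\colon \cS_1\to \cS_0$ gives $f|_{\cS_1}=\id$.
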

\begin{proof}Write $\cS=(\d\colon \cS_1\to \cS_0)$. Since $f$ preserves the Maslov bigrading, it must also preserve the algebraic grading. Write $f_i=f|_{\cS_i}$. Since $H_*(\cS)$ has rank 1 or 0 in each $(\gr_{\ws},\gr_{\zs})$-bigrading, the map $f$ must induce the identity map on homology. We observe that for each generator $\xs_{2i}$ in algebraic grading $0$, there are no other non-zero elements of $\cS$ in the same $(\gr_{\ws},\gr_{\zs})$-grading. Therefore $f(\xs_{2i})=\xs_{2i}$. Since $f$ is a chain map, we know that
\[
\d\circ f_1=f_0\circ \d.
\]
Since $f_0=\id$, we conclude that
\[
\d\circ (f_1+\id)=0.
\]
Since $\d$ is injective as a map from $\cS_1$ to $\cS_0$, we conclude $f_1=\id$. 
\end{proof}

An additional result that we will use concerns endomorphisms of staircases of positive degree:

\begin{lem}\label{lem:staircase-no-maps-positive bidegree} Let $\cS$ be a staircase complex and $\phi\colon \cS\to \cS$ be a map which has $(\gr_{\ws},\gr_{\zs})$-grading $(n,n)$. If $n\ge 1$, then $\phi=0$. 
\end{lem}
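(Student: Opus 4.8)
The plan is to show, a bit more than asked, that there are no nonzero $\bF[W,Z]$-linear maps $\cS\to\cS$ of $(\gr_{\ws},\gr_{\zs})$-bidegree $(n,n)$ once $n\ge 1$; in particular $\phi=0$. Since $\phi$ is $\bF[W,Z]$-linear it is determined by the images $\phi(\xs_0),\dots,\phi(\xs_{2m})$ of the free generators, and each such image is a homogeneous element, so it will suffice to prove $\phi(\xs_j)=0$ for every $j$. Note that this uses nothing about $\phi$ being a chain map.

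First I would record the grading relations among the generators forced by homogeneity of $\d$. From $\d(\xs_{2i+1})=\xs_{2i}W^{\a_i}+\xs_{2i+2}Z^{\b_i}$ with $\a_i,\b_i>0$ one reads off $\gr_{\ws}(\xs_{2i+1})=\gr_{\ws}(\xs_{2i+2})\le\gr_{\ws}(\xs_{2i})$ and $\gr_{\zs}(\xs_{2i+1})=\gr_{\zs}(\xs_{2i})\le\gr_{\zs}(\xs_{2i+2})$. Hence, reading the generators in the order $\xs_0,\xs_1,\dots,\xs_{2m}$, the grading $\gr_{\ws}$ is non-increasing while $\gr_{\zs}$ is non-decreasing. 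The fact I want to extract is that \emph{no generator strictly dominates another in both gradings}: for any indices $j,k$ (including $j=k$) one cannot have both $\gr_{\ws}(\xs_k)>\gr_{\ws}(\xs_j)$ and $\gr_{\zs}(\xs_k)>\gr_{\zs}(\xs_j)$, since if $k\ge j$ the first inequality fails and if $k\le j$ the second fails.

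Next I would combine this with the observation that $\{\xs_kW^aZ^b : 0\le k\le 2m,\ a,b\ge 0\}$ is an $\bF$-basis of $\cS$, with $\xs_kW^aZ^b$ homogeneous of bidegree $(\gr_{\ws}(\xs_k)-2a,\ \gr_{\zs}(\xs_k)-2b)$; consequently a nonzero homogeneous element of bidegree $(p,q)$ can exist only if some generator $\xs_k$ satisfies $\gr_{\ws}(\xs_k)\ge p$ and $\gr_{\zs}(\xs_k)\ge q$. Applying this to $\phi(\xs_j)$, which is homogeneous of bidegree $(\gr_{\ws}(\xs_j)+n,\ \gr_{\zs}(\xs_j)+n)$: if $\phi(\xs_j)\ne 0$ there would be a generator $\xs_k$ with $\gr_{\ws}(\xs_k)\ge\gr_{\ws}(\xs_j)+n$ and $\gr_{\zs}(\xs_k)\ge\gr_{\zs}(\xs_j)+n$, hence, using $n\ge 1$, with $\gr_{\ws}(\xs_k)>\gr_{\ws}(\xs_j)$ and $\gr_{\zs}(\xs_k)>\gr_{\zs}(\xs_j)$, contradicting the no-domination statement. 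Thus $\phi(\xs_j)=0$ for all $j$, and so $\phi=0$.

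I do not expect a genuine obstacle: the whole argument is a grading count. The only point requiring care is correctly bookkeeping the $\gr_{\ws}$- and $\gr_{\zs}$-gradings of the even versus odd generators so as to package them into the no-domination property; once that is in hand, everything else is immediate.
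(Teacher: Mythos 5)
Your proof is correct and is essentially the paper's own argument: a pure grading count using that $\gr_{\ws}$ is monotone non-increasing and $\gr_{\zs}$ monotone non-decreasing along the ordered staircase generators, so no generator strictly dominates another in both gradings, while a nonzero image $\phi(\xs_j)$ would require a generator strictly dominating $\xs_j$ in both. One minor bookkeeping caveat: your intermediate equalities assume the differential preserves the $(\gr_{\ws},\gr_{\zs})$-bigrading, whereas the paper's convention has $\d$ of degree $(-1,-1)$ (hence the strict inequalities $\gr_{\ws}(\xs_{i+1})<\gr_{\ws}(\xs_i)$, $\gr_{\zs}(\xs_{i+1})>\gr_{\zs}(\xs_i)$ in its proof); under either convention the monotonicity and no-domination facts you use still hold, so the argument is unaffected.
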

\begin{proof}Write $\xs_0,\xs_1,\dots, \xs_{2n}$ for the generators of $\cS$, ordered so that $\d(\xs_{2i+1})= \xs_{2i}W^{\a_i}+\xs_{2i+2}Z^{\b_i}$. We observe that
\[
\gr_{\ws}(\xs_{i+1})<\gr_{\ws}(\xs_{i})\quad  \text{and} \quad\gr_{\zs}(\xs_{i+1})>\gr_{\zs}(\xs_{i})
\]
for all $i$. Suppose $\phi(\xs_i)$ contains a summand of $\xs_jW^{a}Z^b$. Suppose that $j\ge i$. Since $\phi$ has $\gr_{\ws}$-grading $n$,  we have $\gr_{\ws}(\xs_j)-2a=\gr_{\ws}(\xs_i)+n$, or equivalently
\[
\gr_{\ws}(\xs_j)-\gr_{\ws}(\xs_i)=n+2a.
\]
Since $j\ge i$, the left hand side is non-positive, while the right hand side is positive, so this is impossible. If $j\le i$, the same argument applies with $\gr_{\zs}$ replacing $\gr_{\ws}$. This completes the proof.
\end{proof}

\section{The candidate bimodule}

In this section, we describe our candidate bimodule for a 2-component L-space link, equipped with framing $(0,0)$. We will write ${}^{\cK^!} \cX^{\alg}(L)^{\cK}$ for this bimodule.  The candidate $DA$-bimodule, ${}_{\cK} \cX^{\alg}(L)^{\cK}$, is obtained by tensoring with the trace bimodule from Koszul duality (constructed in \cite{ZemKoszul}):
\[
{}_{\cK} \cX^{\alg}(L)^{\cK}:={}_{\cK} [\cTr]_{\cK^!} \boxtimes {}^{\cK^!} \cX^{\alg}(L)^{\cK}.
\]

\subsection{The complexes at each idempotent}

\label{sec:complexes-at-idempotents}
\subsubsection{Idempotent $(0,0)$}

We now describe the candidate module ${}^{\cK^!} \cX^{\alg}(L)^{\cK}$ in idempotent $(0,0)$. We will view this as a type-$DD$-module over $(\cR_0^!,\cR_0)$, and will write ${}^{\cR_0^!} \cX^{\alg}(L)^{\cR_0}$. 

 Schematically, the bimodule ${}^{\cR_0^!} \cX^{\alg}(L)^{\cR_0}$ takes the following form:
\begin{equation}
\begin{tikzcd}[column sep=1.5cm]
\cdots
\cC_{s_1-2} 
	\ar[r, "z|L_z", bend left=30]
	\ar[loop above, "\substack{zw|L_{zw}+\\wz|L_{wz}+\\\theta|U}"]
& \cC_{s_1-1}
	\ar[r, "z|L_z", bend left=40]
	\ar[l, "w|L_w", bend left=30]
	\ar[loop above, "\substack{zw|L_{zw}+\\wz|L_{wz}+\\\theta|U}"]
& \cC_{s_1}
	\ar[r, "z|L_z", bend left=40]
	\ar[l, "w|L_w", bend left=40]
	\ar[loop above, "\substack{zw|L_{zw}+\\wz|L_{wz}+\\\theta|U}"]
& \cC_{s_1+1}
	\ar[r, "z|L_z", bend left=30]
	\ar[l, "w|L_w", bend left=40]
	\ar[loop above, "\substack{zw|L_{zw}+\\wz|L_{wz}+\\\theta|U}"]
& \cC_{s_1+2}\cdots
	\ar[l, "w|L_w", bend left=30]
	\ar[loop above, "\substack{zw|L_{zw}+\\wz|L_{wz}+\\\theta|U}"]
\end{tikzcd}.
\label{sec:candidate-idempotent-00}
\end{equation}
In the above, each of the complexes $\cC_{s_1}$ is a type-$D$ module over $\cR_0$. We define each $\cC_{s_1}$ to be the canonical 2 step resolution of the module $\bigoplus_{s_2\in \Z+\lk(K_1,K_2)/2}\cHFL(L, s_1,s_2)$. This is a torsion free complex by definition of an L-space link. As in Section~\ref{sec:L-space-links-background}, the free-resolution of this complex is a staircase complex. 

In Equation~\eqref{sec:candidate-idempotent-00}, we choose the maps $L_w\colon \cC_{s_1}\to \cC_{s_1-1}$ and $L_z\colon \cC_{s_1}\to \cC_{s_1+1}$ to be any chain maps which have $(\gr_{\ws}, \gr_{\zs})$-grading shift  equal to $(-2,0)$ and $(0,-2)$, respectively, and which are non-zero on homology. Any two choices of maps for $L_w$ are chain homotopic as type-$D$ morphisms by Lemma~\ref{lem:basic-staircase-lemma}, and similarly for $L_z$.

Before defining $L_{zw}$ and $L_{wz}$, we set some notation. If $f\colon \cC_{s_1}\to \cC_{s_1'}$ is a type-$D$ morphism, we write $\d_{\Int}(f)$ for the morphism differential of $f$ as a map of type-$D$ modules. 

We declare $L_{zw}$ and $L_{wz}$ to be any choice of maps such that
\[
\d_{\Int}(L_{zw})=L_w\circ L_z+\id\otimes U\quad \text{and} \quad \d_{\Int}(L_{wz})=L_z\circ L_w+\id\otimes U,
\]
which have $(\gr_{\ws},\gr_{\zs})$-grading equal to $(-1,-1)$, and which increase the algebraic grading (i.e. staircase grading) of $\cC_{s_1}$ by 1.

\begin{rem}The above construction is essentially the same as the one appearing in \cite{CZZSatellites}*{Section~5.5}, with the maps $L_w$, $L_z$, $L_{wz}$ and $L_{zw}$ of our present work being the same as $L_W$, $L_Z$, $h_{Z,W}$ and $h_{W,Z}$ in \cite{CZZSatellites}. The relation between the perspective in \cite{CZZSatellites} and the above $DD$-bimodule is explained in \cite{ZemKoszul}*{Section~5.3}. 
\end{rem}

A similar argument to \cite{CZZSatellites}*{Proposition~5.9} shows that the induced bimodule ${}^{\cR_0^!} \cX^{\alg}(L)^{\cR_0}$ is well-defined up to homotopy equivalence.

The groups $\bigoplus_{s_2\in \Z+\lk(K_1,K_2)/2}\cHFL(L, s_1,s_2)$ can be read off from the $H_L$-function, as we now recall. Each of the groups $\cHFL(L,s_1,s_2)$ is isomorphic to $\bF[U]$ since $L$ is an L-space link. By definition, the generator of $\cHFL(L,s_1,s_2)$ has $\gr_{\ws}$ grading $-2H_L(s_1,s_2)$ and $\gr_{\zs}$ grading $-2H_L(s_1,s_2)-2s_1-2s_2$. See, e.g., \cite{CZZApplications}*{Section~4.1}. 

\subsubsection{Idempotent $(1,0)$}

We now describe the complexes appearing at idempotent $(1,0)$. Let $\cS$ be the staircase complex $\cCFK(K_2)$, viewed as a type-$D$ module over $\bF[W,Z]$. We define the complex ${}^{\cR_1^!}\cX^{\alg}(L)^{\cR_0}$ to be the following:
\[
\begin{tikzcd}[column sep=1.5cm]
\cdots
T^{s_1-2} \cS
	\ar[r, "\varphi_+|1", bend left=30]
	\ar[loop below,"\theta|U"]
& T^{s_1-1}\cS
	\ar[r, "\varphi_+|1", bend left=30]
	\ar[l, "\varphi_-|1", bend left=30]
	\ar[loop below,"\theta|U"]
& T^{s_1} \cS
	\ar[r, "\varphi_+|1", bend left=30]
	\ar[l, "\varphi_-|1", bend left=30]
	\ar[loop below,"\theta|U"]
& T^{s_1+1} \cS
	\ar[r, "\varphi_+|1", bend left=30]
	\ar[l, "\varphi_-|1", bend left=30]
	\ar[loop below,"\theta|U"]
& T^{s_1+2} \cS\cdots
	\ar[l, "\varphi_+|1", bend left=30]
	\ar[loop below,"\theta|U"]
\end{tikzcd}
\]
In the above, $s_1$ takes values in $\Z$.

We equip each $T^{s_1}\cS$ with Maslov gradings $\gr_{\ws},\gr_{\zs}$ by identifying it with $\cCFK(K_2)$. We define the $A_2$ Alexander grading $A_2= \frac{1}{2}(\gr_{\ws}-\gr_{\zs})$. We view $T^{s_1} \cS$ as being in $A_1$ Alexander grading $s_1$.

\subsubsection{Idempotent $(0,1)$}

We now consider idempotent $(0,1)$. We define the complex ${}^{\cR^!_0}\cX^{\alg}(L)^{\cR_1}$ abstractly as ${}^{\cR^!_0}\cX(K_1)^{\bF[U]}\otimes [1]^{\bF[T,T^{-1}]}$, where ${}^{\cR_0^!} \cX(K_1)^{\bF[U]}$ is the idempotent 0 subspace of the surgery complex of the knot $K_1$ (treated as a $DD$-bimodule over $(\cR_0^!,\bF[U])$.  In more detail, the $AA$-bimodule ${}_{\cK} \cX(K_1)_{\bF[U]}$ is described in \cite{ZemBordered}*{Section~8}. The construction therein is naturally free on the right action of $U$ (with the left $\cK$-action being $U$-equivariant), and therefore this naturally describes a type-$DA$-bimodule ${}_{\cK} \cX(K_1)^{\bF[U]}$. We define the $DD$-bimodule ${}^{\cR_0^!} \cX(K_1)^{\bF[U]}$ by restricting the left idempotent, and tensoring with ${}^{\cR_0^!} [\cCo]^{\cR_0}$.  The module $[1]^{\bF[T,T^{-1}]}$ denotes a type-$D$ structure with one generator, and vanishing differential.

Using the fact that $K_1$ is an L-space knot, we may produce the following model, which is smaller and the the one we focus on. Write $H_{K_1}\colon \Z\to \Z^{\ge0 }$ for the $H$-function of $K_1$. We will set
\[
\eta(s_1):=H_{K_1}(s_1)-H_{K_1}(s_1+1),
\]
which always takes value in $\{0,1\}$. Then ${}^{\cR^!_0}\cX^{\alg}(L)^{\cR_1}$ is given by the diagram
\[
\begin{tikzcd}[column sep=2cm]
\cdots
\ys_{-2}
	\ar[r, "z|U^{\eta(-2)}", bend left=30]
	\ar[loop below,"\theta|U"]
& \ys_{-1}
	\ar[r, "z|U^{\eta(-1)}", bend left=30]
	\ar[l, "w|U^{1-\eta(-2)}", bend left=30]
	\ar[loop below,"\theta|U"]
& \ys_0
	\ar[r, "z|U^{\eta(0)}", bend left=30]
	\ar[l, "w|U^{1-\eta(-1)}", bend left=30]
	\ar[loop below,"\theta|U"]
& \ys_1
	\ar[r, "z|U^{\eta(1)}", bend left=30]
	\ar[l, "w|U^{1-\eta(0)}", bend left=30]
	\ar[loop below,"\theta|U"]
& \ys_2\cdots
	\ar[l, "w|U^{1-\eta(1)}", bend left=30]
	\ar[loop below,"\theta|U"]
\end{tikzcd}
\]
It is convenient to view ${}^{\cR_0^!} \cX^{\alg}(L)^{\cR_1}$ as having the bigrading $(\gr_{\ws},\gr_{\zs})$, where 
\[
(\gr_{\ws},\gr_{\zs})(\ys_{s_1})=(-2H_{K_1}(s_1),-2H_{K_1}(s_1)-2s_1).
\]
We set
\[
(A_1,A_2)(\ys_{s_1})=(s_1,0).
\]

\subsubsection{Idempotent $(1,1)$}

Finally, we describe the complexes ${}^{\cR_1^!} \cX^{\alg}(L)^{\cR_1}$. We define this complex as the following diagram:
\[
\begin{tikzcd}[column sep=1.5cm]
\cdots
T^{-2}\ve{e}
	\ar[r, "\varphi_+|1", bend left=30]
	\ar[loop below,"\theta|U"]
& T^{-1}\ve{e}
	\ar[r, "\varphi_+|1", bend left=30]
	\ar[l, "\varphi_-|1", bend left=30]
	\ar[loop below,"\theta|U"]
& T^{0}\ve{e}
	\ar[r, "\varphi_+|1", bend left=30]
	\ar[l, "\varphi_-|1", bend left=30]
	\ar[loop below,"\theta|U"]
& T^{1}\ve{e}
	\ar[r, "\varphi_+|1", bend left=30]
	\ar[l, "\varphi_-|1", bend left=30]
	\ar[loop below,"\theta|U"]
& T^{2}\ve{e}
	\ar[l, "\varphi_+|1", bend left=30]
	\ar[loop below,"\theta|U"]
\end{tikzcd}
\]
We view the generator $T^{s_1}\ve{e}$ as having a single Maslov grading, $\gr$, where $\gr(T^{s_1}\ve{e})=0$. We view
\[
(A_1,A_2)(T^{s_1}\ve{e})=(s_1,0).
\]

\subsection{The length 1 maps}

We now define the length 1 maps of the surgery bimodule. In our bimodule ${}^{\cK^!} \cX^{\alg}(L)^{\cK}$, these are the components which shift only one idempotent. (The term \emph{length} comes from the perspective of the link surgery complex from \cite{MOIntegerSurgery} as a hypercube, where the ``length'' refers to the $L^1$-length of the corresponding line segment in the cube).

\subsubsection{Idempotent $(0,0)$ to $(1,0)$}
\label{sec:vertical map from (0,0) to (1,0)}
The maps from idempotent $(0,0)$ to idempotent $(1,0)$ are described (in slightly different language) in \cite{CZZSatellites}*{Section~5.4}. We repeat the description in our present notation. We pick maps 
\[
L_s, L_t\colon \cC_{s_1}\to T^{s_1-\ell/2}\otimes \cS
\]
which are chain maps and are non-zero on homology,  preserve the staircase grading, and which have the following Maslov gradings:
\[
(\gr_{\ws},\gr_{\zs})(L_s)=(0,-2s_1+\ell)\quad \text{and} \quad (\gr_{\ws},\gr_{\zs})(L_t)=(2s_1+\ell,0).
\]

Additionally, we pick maps
\[
L_{zs}\colon \cC_{s_1}\to T^{s_1-\ell/2+1}\cS\quad \text{and} \quad  L_{wt}\colon \cC_{s_1}\to T^{s_1-\ell/2-1} \cS.
\]
We assume that these maps increase the algebraic (staircase) grading by 1, and have grading
\[
(\gr_{\ws},\gr_{\zs})(L_{zs})=(1,-2s_1+\ell-1)\quad \text{and} \quad (\gr_{\ws},\gr_{\zs})(L_{wt})=(2s_1+\ell-1,1).
\]
Additionally, we require these maps to satisfy
\[
\d_{\Int}(L_{zs})=L_s\circ L_z +L_{\varphi_+}\circ L_\sigma \quad \text{and} \quad \d_{\Int}(h_{wt})=L_t \circ L_w+ L_{\varphi_-}\circ L_t.
\] 
In the above, $L_{\varphi_{\pm}}\colon T^{s_1}  \cS\to T^{s_1\pm 1} \otimes \cS$ are the canonical identifications.

We form the components of $\delta^{1,1}$ from idempotent $(0,0)$ to $(1,0)$ by using the above maps. The maps $L_s$ are weighted by $s$. The maps $L_t$ are weighted by $t$. The maps $L_{zs}$ are weighted by $zs$. The maps $L_{wt}$ are weighted by $wt$. We arrange these as shown below:
\[
\begin{tikzcd}[column sep=1cm, row sep=3cm,labels=description]
\cdots
\cC_{s_1-2} 
	\ar[r, "z|L_z", bend left=30]
	\ar[loop above, "\substack{zw|L_{zw}+\\wz|L_{wz}+\\\theta|U}", looseness=25]
	\ar[d, "\substack{s|L_s\\ + t|L_t}"]
	\ar[dr,"zs|L_{zs}", pos=.7]
& \cC_{s_1-1}
	\ar[r, "z|L_z", bend left=40]
	\ar[l, "w|L_w", bend left=30]
	\ar[loop above, "\substack{zw|L_{zw}+\\wz|L_{wz}+\\\theta|U}", looseness=25]
	\ar[d, "\substack{s|L_s\\ + t|L_t}"]
		\ar[dr,"zs|L_{zs}",pos=.7]
	\ar[dl,crossing over, "wt|L_{wt}", pos=.3 ]
& \cC_{s_1}
	\ar[r, "z|L_z", bend left=40]
	\ar[l, "w|L_w", bend left=40]
	\ar[loop above, "\substack{zw|L_{zw}+\\wz|L_{wz}+\\\theta|U}", looseness=25]
	\ar[d, "\substack{s|L_s\\ + t|L_t}"]
	\ar[dr,"zs|L_{zs}",pos=.7]
	\ar[dl,crossing over, "wt|L_{wt}", pos=.3 ]
& \cC_{s_1+1}
	\ar[r, "z|L_z", bend left=30]
	\ar[l, "w|L_w", bend left=40]
	\ar[loop above, "\substack{zw|L_{zw}+\\wz|L_{wz}+\\\theta|U}", looseness=25]
	\ar[d, "\substack{s|L_s\\ + t|L_t}"]
	\ar[dr,"zs|L_{zs}",pos=.7]
	\ar[dl,crossing over, "wt|L_{wt}", pos=.3 ]
& \cC_{s_1+2}\cdots
	\ar[l, "w|L_w", bend left=30]
	\ar[loop above, "\substack{zw|L_{zw}+\\wz|L_{wz}+\\\theta|U}", looseness=25]
	\ar[d, "\substack{s|L_s\\ + t|L_t}"]
	\ar[dl,crossing over, "wt|L_{wt}" ,pos=.3]
\\
\cdots
T^{s_1-\ell/2-2} \cS
	\ar[r, "\varphi_+1", bend left=30]
	\ar[loop below,"\theta|U", looseness=15]
& T^{s_1-\ell/2-1}\cS
	\ar[r, "\varphi_+|1", bend left=30]
	\ar[l, "\varphi_-|1", bend left=30]
	\ar[loop below,"\theta|U", looseness=15]
& T^{s_1-\ell/2} \cS
	\ar[r, "\varphi_+|1", bend left=30]
	\ar[l, "\varphi_-|1", bend left=30]
	\ar[loop below,"\theta|U", looseness=15]
& T^{s_1-\ell/2+1} \cS
	\ar[r, "\varphi_+|1", bend left=30]
	\ar[l, "\varphi_-|1", bend left=30]
	\ar[loop below,"\theta|U", looseness=15]
& T^{s_1-\ell/2+2} \cS\cdots
	\ar[l, "\varphi_+|1", bend left=30]
	\ar[loop below,"\theta|U", looseness=15]
\end{tikzcd}
\]

It is convenient to write $\Psi^{K_1}$ for the components of the above diagram which are weighted by multiples of $s$, and to write $\Psi^{-K_1}$ for the components of the above diagram which are weighted by multiples of $t$. 

\begin{rem} In \cite{CZZSatellites}*{Section~5.5}, we wrote $L_{\sigma}$, $L_{\tau}$, $h_{\sigma,Z}$ and $h_{\tau,W}$ for the maps we write $L_{s}$, $L_t$, $L_{zs}$, and $L_{wt}$ above (respectively). In \cite{CZZSatellites}*{Section~5.5}, we also included choices of homotopies $h_{\sigma,W}$ and $h_{\tau,Z}$, though it is explained in \cite{ZemKoszul}*{Remark~5.3} that these contain redundant information and do not appear in the Koszul dual type-$DD$ bimodule.
\end{rem}

\subsubsection{Idempotent $(0,0)$ to $(0,1)$}
\label{sec:horizontal map from (0,0) to (0,1)}

We now describe the components of the differential on ${}^{\cK^!} \cX^{\alg}(L)^{\cK}$ which go from idempotent $(0,0)$ to $(0,1)$.   Write $(\sigma),(\tau)\subset \cK$ for the ideals generated by $\sigma$ and $\tau$, respectively. We pick
\[
R^{\sigma}\colon \cC_{s_1}\to \ys_{s_1- \ell/2}\otimes (\sigma)\quad \text{and} \quad R^{\tau}\colon \cC_{s_1}\to \ys_{s_1+\ell/2}\otimes (\tau)
\]
to be the unique maps satisfying the following properties
\begin{enumerate}
\item $R^{\sigma}$ and $R^{\tau}$ are chain maps.
\item $R^{\sigma}$ and $R^{\tau}$ vanish on algebraic grading 1, and are non-vanishing on each monomial generator of $\cC_s$ in algebraic grading 0.
\item The maps $R^{\sigma}$ and $R^{\tau}$ are homogeneously graded with respect to the Alexander grading and to furthermore satisfy
\[
A_1(R^{\sigma})=-\ell/2,\quad A_1(R^{\tau})=\ell/2,
\]
and
\[
A_2(R^{\tau})=A_2(R^{\sigma})=-\ell/2.
\]
 (See Equation~\eqref{eq:Alexander-grading-shifts} our grading conventions). 
\item On each Alexander grading in $\bH(L)$, $R^{\sigma}$ and $R^{\tau}$ are homogeneously graded with respect to $(\gr_{\ws},\gr_{\zs})$. On generators which lie in Alexander grading $(s_1,s_2)$, they have grading $(\gr_{\ws},\gr_{\zs})(R^{\sigma})=(0, 2s_2+\ell)$ and $(\gr_{\ws},\gr_{\zs})(R^{\tau})=(-2s_2+\ell,0)$. 
\end{enumerate}

When defining the grading of the maps $R^{\sigma}$ and $R^{\tau}$, we are viewing an element $\ys_s\otimes U^i T^j\sigma$ as having 
\[
(\gr_{\ws},\gr_{\zs})(\ys_{s_1}\otimes U^iT^j\sigma)=(\gr_{\ws},\gr_{\zs})(\ys_{s_1})-(2i,2i),
\]
and
\[
(A_1,A_2)(\ys_{s_1}\otimes U^i T^j \sigma)=(s_1,j),
\]
and similarly for $\tau$-weighted elements.
 
We use the map $1|R^{\sigma}+1|R^{\tau}$ as the component of the differential from ${}^{\cR_0^!} \cX^{\alg}(L)^{\cR_0}$ to ${}^{\cR_0^!} \cX^{\alg}(L)^{\cR_1}$.

Schematically, we indicate this below:
\[
\begin{tikzcd}[column sep={3.2cm,between origins},row sep=3cm]
\,
&
\cdots
	\ar[r, "z|L_z", bend left=40]
	\ar[dr, "1|R^{\tau}",pos=.7]
& \cC_{s_1}
	\ar[r, "z|L_z", bend left=40]
	\ar[l, "w|L_w", bend left=40]
	\ar[loop above, "\substack{zw|L_{zw}+\\wz|L_{wz}+\\\theta|U}",looseness=20]
	\ar[dl, "1|R^{\sigma}", swap,crossing over, pos=.7]
	\ar[dr, "1|R^{\tau}",pos=.7]
& \cdots
	\ar[l, "w|L_w", bend left=40]
	\ar[dl, "1|R^{\sigma}",swap, pos=.7,crossing over]	
&
	\\
	\cdots
	\ys_{s_1-\ell/2-1}\cdots
		\ar[r, "z|U^{\eta(s_1-\ell/2-1)}", bend left=30]
		\ar[loop below,"\theta|U"]
	& \ys_{s_1-\ell/2}
		\ar[r, "z|U^{\eta(s_1-\ell/2)}", bend left=30]
		\ar[loop below,"\theta|U"]
		\ar[l, "w|U^{1-\eta(s_1-\ell/2-1)}", bend left=30]
	& \cdots
		\ar[r, bend left=30, "z|U^{\eta(s_1+\ell/2)}"]
		\ar[l, bend left=30, "w|U^{\eta(s_1-\ell/2)}",pos=.55]
	& \ys_{s_1+\ell/2}
		\ar[r, "z|U^{\eta(s_1+\ell/2)}", bend left=30]
		\ar[l, "w|U^{1-\eta(s_1+\ell/2)}", bend left=30,pos=.45]
		\ar[loop below,"\theta|U"]
	& \ys_{s_1+\ell/2+1}\cdots
		\ar[l, "w|U^{1-\eta(s_1+\ell/2+1)}", bend left=30]
		\ar[loop below,"\theta|U"]
\end{tikzcd}
\]

The following is straightforward, and we leave the proof to the reader:

\begin{lem} The maps $R^{\sigma}$ and $R^{\tau}$ are chain maps of type-$DD$ modules from ${}^{\cR_0} \cX^{\alg}(L)^{\cR_0}$ to ${}^{\cR_0} \cX^{\alg}(L)^{\cR_1}$. 
\end{lem}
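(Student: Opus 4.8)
The plan is to verify directly that $R := 1|R^\sigma + 1|R^\tau$ is a cycle in the morphism complex between the structure maps $\delta^{1,1}$ of ${}^{\cR_0^!}\cX^{\alg}(L)^{\cR_0}$ and of ${}^{\cR_0^!}\cX^{\alg}(L)^{\cR_1}$. Since $R$ is weighted by $1\in\cR_0^!$ and $\mu_1(1)=0$, the morphism relation splits, according to the $\cR_0^!$-weight of the composite output, into equations indexed by the weights $1$, $w$, $z$, $\theta$, $wz$ and $zw$: these are the only $\cR_0^!$-weights carried by the structure maps on the two idempotents involved, and $\mu_1$ of $\cR_0^!$ produces no new weight from $1$, $w$ or $z$ and sends $\theta$ to $wz+zw$. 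First I would dispose of the weight-$1$ equation: unwinding the definitions, it says exactly that $R^\sigma,R^\tau$ intertwine the internal type-$D$ differential of $\cC_{s_1}$ over $\cR_0$ with the vanishing differential of $\ys_{s_1\mp\ell/2}$, where the target carries its $\cR_1$-structure through the $(\cR_1,\cR_0)$-bimodules $(\sigma)$, $(\tau)$ on which $\sigma a=\phi^\sigma(a)\sigma$ and $\tau a=\phi^\tau(a)\tau$; this is precisely requirement (1) in the definition of $R^\sigma$, $R^\tau$, so there is nothing to prove.

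Next I would handle the weight-$\theta$, weight-$wz$ and weight-$zw$ equations together. On both idempotents $(0,0)$ and $(0,1)$ the only $\theta$-, $wz$- and $zw$-weighted components of $\delta^{1,1}$ are the $\theta|U$ self-loops (the $(0,1)$ complex carries no $wz$- or $zw$-self-loops at all), and $R^\sigma$, $R^\tau$ are $\bF[U]$-linear because they are type-$D$ morphisms over $\cR_0=\bF[W,Z]$ with $U=WZ$. Moreover $R^\sigma$, $R^\tau$ vanish on algebraic grading $1$ by requirement (2), whereas $L_{wz}$ and $L_{zw}$ raise it by $1$, so $R^\sigma\circ L_{wz}=R^\sigma\circ L_{zw}=0$ and likewise for $R^\tau$. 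Consequently these three equations collapse to the identity $(\id\otimes U)\circ R^\bullet = R^\bullet\circ(\id\otimes U)$ together with $\mu_1(\theta)=wz+zw$, which are exactly pieces of the (already established) $DD$-structure relations of ${}^{\cR_0^!}\cX^{\alg}(L)^{\cR_1}$ composed with $R^\bullet$, hence hold automatically.

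The only substantive point is the weight-$w$ and weight-$z$ equations, and by the evident symmetry (interchange $L_w\leftrightarrow L_z$ and the $U^{1-\eta(\cdot)}$-arrows with the $U^{\eta(\cdot)}$-arrows of the $(0,1)$ complex) it suffices to treat weight $w$, where the equation reads schematically
\[
R^\sigma|_{\cC_{s_1-1}}\circ L_w \;+\; \bigl(\id\otimes U^{1-\eta(s_1-\ell/2-1)}\bigr)\circ R^\sigma|_{\cC_{s_1}} \;=\;0
\]
and its analogue with $\tau$. Both summands are algebraic-grading-preserving chain maps $\cC_{s_1}\to\ys_{s_1-\ell/2-1}\otimes(\sigma)$; since $R^\sigma$ kills algebraic grading $1$ and $L_w$ preserves it, both are supported in algebraic grading $0$, and being chain maps into a complex with vanishing differential they factor through $H_0(\cC_{s_1})=\bigoplus_{s_2}\cHFL(L,s_1,s_2)$. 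I would then: (i) compute the $(\gr_{\ws},\gr_{\zs})$- and Alexander-multidegrees of the two summands from the prescribed gradings of $L_w$ and $R^\sigma$, the formula $(\gr_{\ws},\gr_{\zs})(\ys_{s_1})=(-2H_{K_1}(s_1),-2H_{K_1}(s_1)-2s_1)$, and $\eta(s_1)=H_{K_1}(s_1)-H_{K_1}(s_1+1)$, and check that they coincide; (ii) note that $\bigoplus_{s_2}\cHFL(L,s_1,s_2)$ is a sum of copies of $\bF[U]$ (as $L$ is an L-space link), so the graded $\bF[U]$-module maps from it into $\ys_{s_1-\ell/2-1}\otimes(\sigma)$ in any fixed multidegree form a space of dimension at most $1$; (iii) check that both summands are non-zero on homology, using that $L_w$ and $R^\sigma$ are non-zero on homology by construction and that the relevant homology groups have rank at most $1$ in each multidegree. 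It then follows that the two summands are equal, so their sum vanishes in characteristic $2$.

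The main obstacle is step (i): confirming that the $U$-powers $\eta(s_1)$ and $1-\eta(s_1)$ attached to the $w$- and $z$-arrows of ${}^{\cR_0^!}\cX^{\alg}(L)^{\cR_1}$, together with the $T^{\pm\ell/2}$-shifts and the $H_{K_1}$-determined gradings of the generators $\ys_{s_1}$, are precisely the data making the two compositions above land in the same multidegree while remaining compatible with the grading constraints (3)–(4) imposed on $R^\sigma$ and $R^\tau$. Everything else is either formal or a direct consequence of the uniqueness results for maps out of staircase complexes, Lemma~\ref{lem:basic-staircase-lemma} and Lemma~\ref{lem:generalized-Ext-computation}.
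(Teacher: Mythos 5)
The paper offers no argument here (the proof is explicitly left to the reader), so your proposal can only be judged on its own terms. Your skeleton is the natural one and most of it is fine: splitting the morphism relation for $1|R^\sigma+1|R^\tau$ by the left $\cR_0^!$-weight, observing that the weight-$1$ part is condition (1) of the definition, that the $wz$- and $zw$-parts vanish because $R^\sigma,R^\tau$ kill algebraic grading $1$ while $L_{wz},L_{zw}$ raise it, and that the $\theta$-part cancels because $U$ is central in $\cK$ (your parenthetical invocation of $\mu_1(\theta)=wz+zw$ is irrelevant, since the left output of $R$ is $1$ and $\mu_1$ never enters $\d(R)$, but this is harmless). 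The reduction of the $w$- and $z$-weighted equations to an equality of two chain maps $\cC_{s_1}\to \ys_{s_1\mp\ell/2\mp 1}\otimes(\sigma)$ (resp.\ $(\tau)$) factoring through $H_*(\cC_{s_1})=\bigoplus_{s_2}\cHFL(L,s_1,s_2)$ is also correct.

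The gap is in steps (ii)--(iii), which carry the whole weight of the argument. The space of homogeneous \emph{$\bF[U]$-module} maps of a fixed multidegree from $\bigoplus_{s_2}\cHFL(L,s_1,s_2)$ to the rank-one module $\ys_{s_1-\ell/2-1}\otimes(\sigma)\cong\bF[U,T,T^{-1}]$ is \emph{not} at most one-dimensional: the summands indexed by $s_2$ are independent over $\bF[U]$, so one gets a free coefficient in $\bF$ for every $s_2$ for which the forced $U$-power is nonnegative. Consequently ``both summands are nonzero on homology'' does not force them to be equal; you would need both composites to be nonzero on the generator of \emph{every} tower $\cHFL(L,s_1,s_2)$, and ``$L_w$ is non-zero on homology'' (the only property built into the definition of $L_w$) does not give this by itself. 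The missing input is the $\cR_0$-module structure: the composites are equivariant for $\bF[W,Z]$ acting through $\phi^\sigma$ (resp.\ $\phi^\tau$), and for an L-space link the $W$- and $Z$-actions joining adjacent $s_2$-towers of $\bigoplus_{s_2}\cHFL(L,s_1,s_2)$ are nonzero maps of free $\bF[U]$-modules. Using this ``connectedness'' one shows that a homogeneous $\phi^\sigma$-equivariant map is determined by its value on a single generator (so the relevant map space really is at most one-dimensional), or equivalently that nonvanishing of $L_w$, $R^\sigma$, $R^\tau$ on homology propagates to every Alexander grading $s_2$; only then does your comparison of the two composites (both landing on the unique monomial of the prescribed multidegree in a rank-one target, with coefficient $1$) close the argument. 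With that replacement for (ii)--(iii), together with the grading verification you flag in (i), the proof goes through.
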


Note that in the above, when we say that $R^{\sigma}$ is a chain map from ${}^{\cR_0} \cX^{\alg}(L)^{\cR_0}$ to ${}^{\cR_0}\cX^{\alg}(L)^{\cR_1}$, we are viewing both $\cR_0$ and $\cR_1$ as being inside of $\cK$, and therefore viewing both modules as $DD$ bimodules over $(\cR_0,\cK)$. Furthermore, when we call these maps chain maps, we mean that the morphism differential vanishes on them.

\subsubsection{Idempotent $(1,0)$ to $(1,1)$}
\label{sec:horizontal map from (1,0) to (1,1)}
The staircase complex $\cS=\cCFK(K_2)$ extends to a type-$D$ structure $\cX_0(K_2)^{\cK}$, which is normalized so that $\cX_0(K_2)\cdot \ve{I}_1$ consists of a single generator, concentrated in $(\gr,A)$ grading $0$. See \cite{HMSZ-Naturality}*{Section~5.3} for more details. Let $\delta^{\sigma}$ and $\delta^{\tau}$ denote the components of the structure map on $\cX_0(K_2)^{\cK}$ which are weighted by $\sigma$ and $\tau$.

 We define maps $\Psi^{K_2}$ and $\Psi^{-K_2}$ from 
$ {}^{\cR_1^!} \cX^{\alg}(L)^{\cR_0}$ to ${}^{\cR_1^!} \cX^{\alg}(L)^{\cR_1}$ 
  via the formulas
\[
\Psi^{K_2}=1\otimes L_{T^0}\otimes \delta^{\sigma}\quad \text{and} \quad \Psi^{-K_2}=1\otimes L_{T^{\ell}}\otimes \delta^{\tau}.
\]
Here $L_{T^i}$ is the endomorphism of $\bF[T,T^{-1}]$ which is multiplication by $T^i$. We define the components of the differential $\delta^{1,1}$ of ${}^{\cK^!} \cX^{\alg}(L)^{\cK}$ which go from $ {}^{\cR_1^!} \cX^{\alg}(L)^{\cR_0}$ to ${}^{\cR_1^!} \cX^{\alg}(L)^{\cR_1}$ to be the sum of $\Psi^{K_2}$ and $\Psi^{-K_2}$. 

\subsubsection{Idempotent $(0,1)$ to $(1,1)$}
\label{sec:vertical map from (0,1) to (1,1)}
Finally, we define the component of $\delta^{1,1}$ which goes from ${}^{\cR_0^!}\cX^{\alg}(L)^{\cR_1}$ to ${}^{\cR_1^!}\cX^{\alg}(L)^{\cR_1}$. We recall that the underlying vector spaces of ${}^{\cR_0^!}\cX^{\alg}(L)^{\cR_1}$ and ${}^{\cR_1^!}\cX^{\alg}(L)^{\cR_1}$ are identified  with the idempotent 0 and 1 subspaces of the type-$D$-structure 
\[
{}^{\cK^!} \cX_0(K_1)^{\bF[U]}={}^{\cK^!} [\cCo]^{\cK}\boxtimes {}_{\cK} \cX_0(K_1)^{\bF[U]}
\] (which as vector spaces are the same as the idempotent $0$ and $1$ subspaces of ${}_{\cK} \cX_0(K_1)^{\bF[U]}$).  Write $L_\sigma,L_\tau\colon \ve{I}_0\cdot \cX_0(K_1)\to \ve{I}_1\cdot \cX_0(K_1)$ for the structure maps $\delta_2^1(\sigma,-)$ and $\delta_2^1(\tau,-)$. Then the differentials from ${}^{\cR_0^!}\cX^{\alg}(L)^{\cR_1}$ to ${}^{\cR_1^!}\cX^{\alg}(L)^{\cR_1}$ are given by $s\otimes L_\sigma$ and $t\otimes T^{\ell} L_\tau$, as shown in the diagram below:
\[
\begin{tikzcd}[column sep=2cm, row sep=3cm]
\cdots
\ys_{-2}\cdots
	\ar[r, "z|U^{\eta(-2)}", bend left=30]
	\ar[loop above,"\theta|U"]
	\ar[d, "\substack{s|L_\sigma+\\ t| T^{\ell} L_\tau}"]
& \ys_{-1}
	\ar[r, "z|U^{\eta(-1)}", bend left=30]
	\ar[l, "w|U^{1-\eta(-2)}", bend left=30]
	\ar[loop above,"\theta|U"]
		\ar[d, "\substack{s|L_\sigma+\\ t| T^{\ell} L_\tau}"]
& \ys_0
	\ar[r, "z|U^{\eta(0)}", bend left=30]
	\ar[l, "w|U^{1-\eta(-1)}", bend left=30]
	\ar[loop above,"\theta|U"]
	\ar[d, "\substack{s|L_\sigma+\\ t| T^{\ell} L_\tau}"]
& \ys_1
	\ar[r, "z|U^{\eta(1)}", bend left=30]
	\ar[l, "w|U^{1-\eta(0)}", bend left=30]
	\ar[loop above,"\theta|U"]
	\ar[d, "\substack{s|L_\sigma+\\ t| T^{\ell} L_\tau}"]
& \ys_2\cdots
	\ar[l, "w|U^{1-\eta(1)}", bend left=30]
	\ar[loop above,"\theta|U"]
	\ar[d, "\substack{s|L_\sigma+\\ t| T^{\ell} L_\tau}"]
	\\
\cdots
T^{-2}\ve{e} 
	\ar[r, "\varphi_+|1", bend left=30]
	\ar[loop below,"\theta|U"]
& T^{-1}\ve{e}
	\ar[r, "\varphi_+|1", bend left=30]
	\ar[l, "\varphi_-|1", bend left=30]
	\ar[loop below,"\theta|U"]
& T^0\ve{e}
	\ar[r, "\varphi_+|1", bend left=30]
	\ar[l, "\varphi_-|1", bend left=30]
	\ar[loop below,"\theta|U"]
& T^{1}\ve{e}
	\ar[r, "\varphi_+|1", bend left=30]
	\ar[l, "\varphi_-|1", bend left=30]
	\ar[loop below,"\theta|U"]
& T^{2} \ve{e}
	\ar[l, "\varphi_+|1", bend left=30]
	\ar[loop below,"\theta|U"]
\end{tikzcd}
\]
\begin{rem} In the above diagram, the $T^\ell $ weight on the arrows $t|T^\ell L_\tau$ is part of the $\cR_1$ output of the map. This is not the same as the $T$-powers shown along the bottom row, which are part of the vector space of the module ${}^{\cR_1^!} \cX^{\alg}(L)^{\cR_1}$. 
\end{rem}

\subsection{The length 2 maps} 

We define the complex ${}^{\cK^!}\cX^{\alg}(L)^{\cK}$ to have vanishing length 2 map. (By length 2 map, we mean the component of $\delta^{1,1}$ which goes from idempotent $(0,0)$ to $(1,1)$).

Having defined the structure map $\delta^{1,1}$ on ${}^{\cK^!} \cX^{\alg}(L)^{\cK}$, we state the following:

\begin{prop}
 The bimodule ${}^{\cK^!} \cX^{\alg}(L)^{\cK}$ satisfies the type-$DD$ structure relation, and furthermore is well-defined up to $U$-equivariant homotopy equivalence.
\end{prop}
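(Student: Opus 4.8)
The plan is to verify the type-$DD$ structure relation idempotent block by idempotent block, and then to establish well-definedness by a deformation argument using Lemma~\ref{lem:non-standard-perturbation-lemma} together with the staircase obstruction computations of Section~4. For the structure relation, recall that $\delta^{1,1}$ on ${}^{\cK^!}\cX^{\alg}(L)^{\cK}$ is a sum of the internal type-$D$ differentials of the complexes $\cC_{s_1}$, $T^{s_1}\cS$, $\ys_{s_1}$, $T^{s_1}\ve{e}$ at each idempotent, the length-1 maps defined in Sections~5.2.1--5.2.4, and a vanishing length-2 map; so the $DD$-relation decomposes as: (i) the relation within each of the four idempotents $(0,0),(1,0),(0,1),(1,1)$ separately, (ii) the relation for each of the four length-1 ``edge'' maps $(0,0)\to(1,0)$, $(0,0)\to(0,1)$, $(1,0)\to(1,1)$, $(0,1)\to(1,1)$, which asserts that each edge map is a chain map of $DD$-bimodules, and (iii) the length-2 ``square'' relation, which since the length-2 map is zero asserts that the two composites $(0,0)\to(1,0)\to(1,1)$ and $(0,0)\to(0,1)\to(1,1)$ sum to a $DD$-morphism differential of zero. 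Step (i) is immediate from the construction: each idempotent-$(0,0)$ complex is built so that $L_w, L_z$ are chain maps and $L_{zw},L_{wz}$ satisfy $\d_{\Int}(L_{zw})=L_wL_z+\id\otimes U$, $\d_{\Int}(L_{wz})=L_zL_w+\id\otimes U$, with the $\theta|U$ loop encoding the $\mu_1(\theta)=wz+zw$ relation in $\cK^!$; the other three idempotents are even simpler, being chains of identifications together with $\theta|U$ loops, where the $\varphi_\pm$ arrows reproduce the curvature $\mu_0=\varphi_-\varphi_++\varphi_+\varphi_-$. Step (ii) reduces to the statements that $\Psi^{\pm K_1}$ (the $s,t$-weighted maps with $L_s,L_t,L_{zs},L_{wt}$) and $R^\sigma,R^\tau$ and $\Psi^{\pm K_2}$ and $s\otimes L_\sigma, t\otimes T^\ell L_\tau$ are each $DD$-chain maps; for $R^\sigma,R^\tau$ this is the lemma already recorded (``The maps $R^\sigma$ and $R^\tau$ are chain maps of type-$DD$ modules''), and for the others the relevant chain-map identities are exactly the defining equations $\d_{\Int}(L_{zs})=L_sL_z+L_{\varphi_+}L_\sigma$ etc., together with the fact that $\delta^\sigma,\delta^\tau$ on $\cX_0(K_2)^\cK$ satisfy the type-$D$ relation.

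The genuinely nontrivial part of the structure relation is step (iii), the square relation at idempotent $(1,1)$. One must check that
\[
\bigl(s\otimes L_\sigma + t\otimes T^\ell L_\tau\bigr)\circ\bigl(1|R^\sigma + 1|R^\tau\bigr) \;+\; \bigl(\Psi^{K_2}+\Psi^{-K_2}\bigr)\circ\bigl(\Psi^{K_1}+\Psi^{-K_1}\bigr)
\]
is a $DD$-morphism differential of zero, i.e. null-homotopic as a morphism from ${}^{\cR^!_0 \text{ or } \cR^!_1}\cX^{\alg}(L)^{\cR_0}$ to ${}^{\cR^!_1}\cX^{\alg}(L)^{\cR_1}$, where the relevant pieces split according to source idempotent. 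The cross terms with mismatched algebra weights ($s$ against $t$, etc.) vanish for idempotent reasons in $\cK$, and the surviving contributions are maps of the form $\cC_{s_1}\to T^{?}\ve{e}$; the grading bookkeeping from Equation~\eqref{eq:Alexander-grading-shifts} together with the $(A_1,A_2)$ normalizations chosen in Section~5.1 pins down their source and target, and one argues that any two candidate fillings of the square agree up to homotopy using Lemma~\ref{lem:basic-staircase-lemma} / Lemma~\ref{lem:generalized-Ext-computation} (the target staircase here is trivial, a single generator, so the obstruction group vanishes outright). This is the step I expect to be the main obstacle, precisely because it is where the interaction of the two ``knot'' directions with the L-space link hypothesis enters, and where one must be careful that the $T$-power normalizations of $L_s, L_t$ versus $R^\sigma, R^\tau$ versus $L_\sigma, L_\tau$ are mutually consistent — the correction terms $L_{zs}, L_{wt}, L_{zw}, L_{wz}$ are designed to make this work, but verifying it requires tracking the internal differentials carefully.

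For well-definedness up to $U$-equivariant homotopy equivalence: the construction made a finite list of choices (the maps $L_w, L_z, L_s, L_t, R^\sigma, R^\tau$ and the choices of $L_{zw}, L_{wz}, L_{zs}, L_{wt}$), each constrained to a prescribed bigrading and algebraic-grading. By Lemma~\ref{lem:basic-staircase-lemma} any two choices of $L_w$ (resp. $L_z$, $L_s$, $L_t$) differ by a $\bF[W,Z]$-equivariant null-homotopic difference, and the uniqueness clauses force $R^\sigma, R^\tau$ to be literally unique. Any two choices of the correction terms $L_{zw}$ (etc.) differ by a chain map of bidegree $(-1,-1)$ between staircases, which by Lemma~\ref{lem:ext-computation} / Lemma~\ref{lem:generalized-Ext-computation} is null-homotopic. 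The standard way to upgrade these pointwise homotopies to a homotopy equivalence of $DD$-bimodules is to interpolate: form the ``mapping cylinder'' $DD$-bimodule over the two choices and apply the perturbation Lemma~\ref{lem:non-standard-perturbation-lemma} to absorb the difference of structure maps into a twist, producing the required equivalence; since every difference term can be taken with no $\theta$-weight (the bidegree-$(-1,-1)$ and chain-homotopy terms all live in the $w,z,s,t,\varphi_\pm$-weighted part, never introducing a new multiple of $\theta$), the resulting equivalence is $U$-equivariant in the sense of Definition~\ref{def:U equivariant modules}. This is essentially the argument of \cite{CZZSatellites}*{Proposition~5.9} and \cite{ZemKoszul}*{Section~5.3}, adapted to keep track of the $\theta|U$ normalization, and I would present it by that route.
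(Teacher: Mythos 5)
The paper itself offers no argument here beyond ``we leave the proof to the reader'' and a pointer to \cite{CZZSatellites}*{Proposition~5.10}, so your plan must be judged on its own terms; its skeleton (check the relation idempotent-by-idempotent and edge-by-edge, then deduce well-definedness from the staircase rigidity lemmas) is the expected one, but your treatment of the diagonal term contains a genuine logical gap. Since the length-2 component of $\delta^{1,1}$ is \emph{defined} to be zero, the $(0,0)\to(1,1)$ component of the type-$DD$ structure relation demands that the sum of the two composites of length-1 maps vanish \emph{identically}, not merely up to homotopy: ``is the morphism differential of zero'' means ``equals zero,'' whereas ``null-homotopic'' means ``equals $\d(j)$ for some $j$,'' and in the latter case the structure relation would hold only for the different bimodule whose length-2 map is $j$. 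Your appeal to vanishing obstruction groups and uniqueness of fillings (Lemmas~\ref{lem:basic-staircase-lemma} and~\ref{lem:generalized-Ext-computation}) therefore proves the wrong statement. What actually has to be verified is exact cancellation: the $s$/$\sigma$-weighted part of $\Psi^{K_2}\circ \Psi^{K_1}$ must literally cancel against $(s\otimes L_\sigma)\circ R^{\sigma}$; the $zs$- and $wt$-weighted composites of the form $\delta^{\sigma},\delta^{\tau}$ applied after $L_{zs}$ or $L_{wt}$ must be shown to vanish (they have no partner on the other side of the square); and, contrary to your claim that cross terms ``vanish for idempotent reasons,'' the mixed-orientation terms such as $\Psi^{-K_2}\circ\Psi^{K_1}$ versus $(s\otimes L_\sigma)\circ R^{\tau}$ do not vanish by idempotents (the left $\cK^!$-weight and right $\cK$-weight are independent) and must also cancel exactly. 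Carrying this out, using the normalizations of $R^{\sigma},R^{\tau},L_\sigma,L_\tau,\delta^{\sigma},\delta^{\tau}$ together with the relations $zs=s\varphi_+$, $wt=t\varphi_-$ and $\sigma W^iZ^j=U^iT^{j-i}\sigma$, is the real content of the first half of the proposition, and it is absent from your proposal.

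On well-definedness your outline is reasonable and close in spirit to \cite{CZZSatellites}*{Proposition~5.9}: uniqueness up to homotopy of $L_w,L_z,L_s,L_t$ from Lemma~\ref{lem:basic-staircase-lemma}, of the correction terms from Lemmas~\ref{lem:ext-computation} and~\ref{lem:generalized-Ext-computation}, and literal uniqueness of $R^{\sigma},R^{\tau}$. Two caveats, though: assembling these pointwise homotopies into a homotopy equivalence of $DD$-bimodules via Lemma~\ref{lem:non-standard-perturbation-lemma} requires $1+h$ to be invertible, which over the uncompleted $\cR_0^!$ is not automatic --- the analogous arguments in Section~6 are run over $\ve{\cK}^!$ and transferred back by Proposition~\ref{prop:equivalence-of-categories}, and you should either follow that route or argue that the relevant sums are finite; and when comparing two candidates you must also control the induced discrepancy in the diagonal direction $U$-equivariantly (this is where an argument in the style of Lemma~\ref{lem:length-2-homotopic}, with $U$-equivariance excluding $\theta$-weighted corrections, enters), since knowing each individual choice is well-defined up to homotopy does not by itself produce an equivalence of the assembled bimodules.
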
 

We leave the proof to the reader. Compare \cite{CZZSatellites}*{Proposition~5.10}.

\section{Proof of the equivalence}

In this section, we prove the following theorem:

\begin{prop}\label{prop:equivalence-completed-algebra} If $L$ is a 2-component L-space link in $S^3$, and $\scA$ is an arc-system such that ${}^{\ve{\cK}^!} \cX(L,\scA)^{\cK}$ admits a $U$-equivariant model, then there is a homotopy equivalence
\[
{}^{\ve{\cK}^!} \cX^{\alg}(L)^{\cK}\simeq {}^{\ve{\cK}^!} \cX(L,\scA)^{\cK}.
\]
\end{prop}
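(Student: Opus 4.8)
The plan is to compare the two $DD$-bimodules over $(\ve{\cK}^!,\cK)$ one idempotent at a time, using the staircase lemmas of Section~4 together with the perturbation Lemma~\ref{lem:non-standard-perturbation-lemma} to kill the discrepancies between the model provided by hypothesis and our algebraically-defined candidate. First I would recall that ${}^{\ve{\cK}^!}\cX(L,\scA)^{\cK}$ computes the link surgery complex, so by Liu's reduced model \cite{Liu-L-spaces} (or by the computation in \cite{CZZSatellites}) its homology in each idempotent is determined by the $H_L$-function, hence agrees with the homology of the corresponding piece of ${}^{\ve{\cK}^!}\cX^{\alg}(L)^{\cK}$. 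The key point is that in each idempotent the candidate is, by construction, a free resolution (a direct sum of staircase complexes in the $(0,0)$ and $(1,0)$ cases, and similarly simple complexes in the $(0,1)$ and $(1,1)$ cases), so Lemma~\ref{lem:basic-staircase-lemma} applies: any two grading-preserving chain maps inducing the same map on homology are chain homotopic, and a grading-preserving quasi-isomorphism between them is unique up to homotopy. This gives, idempotent by idempotent, a homotopy equivalence of the underlying type-$D$ structures over $\cR_0$ and $\cR_1$ respectively.

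Next I would promote these idempotent-wise equivalences to an equivalence of the full $DD$-bimodule. The $U$-equivariance hypothesis is essential here: it guarantees that the $\theta$-weighted part of $\delta^{1,1}$ is exactly $\theta\otimes\id\otimes U$ with no other $\theta$-terms (Definition~\ref{def:U equivariant modules}), so the bimodule is controlled by finitely many ``length $\le 2$'' structure maps in the non-$\theta$ weights, exactly as in the candidate. I would match the length-1 maps ($L_w,L_z$, $L_s,L_t$, $R^\sigma,R^\tau$, $L_\sigma,L_\tau$, $\delta^\sigma,\delta^\tau$, $\varphi_\pm$) first: each of these is a chain map between staircase-type complexes that is forced up to homotopy by its Alexander and Maslov bigrading together with its behavior on homology (using Lemmas~\ref{lem:basic-staircase-lemma} and~\ref{lem:generalized-Ext-computation}, and Lemma~\ref{lem:grading-preserving-endo=id} to pin down endomorphisms). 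After arranging these to agree, the remaining discrepancy lies in the $zw$- and $wz$-weighted (and $zs$, $wt$) homotopies, and here Lemma~\ref{lem:ext-computation} (vanishing of the relevant $\Ext^1$ in bidegree $(-1,-1)$) shows any two valid choices differ by a boundary. The $DD$-structure relation then forces the remaining ambiguity to be a coboundary in the morphism complex, which I would remove by the perturbation lemma: the correction term $h$ has the form predicted in the Remark after Lemma~\ref{lem:non-standard-perturbation-lemma}, the sum $(1+h)^{-1}=\sum h^n$ converges in the weight-completed algebra $\bm{\cR}_0^!$, and the resulting Mauer--Cartan element realizes the change of $\delta^{1,1}$.

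Finally I would assemble the pieces into a single morphism of $DD$-bimodules and check it is a homotopy equivalence. Concretely, the idempotent-wise quasi-isomorphisms together with the matched length-1 maps define a degree-0 $DD$-morphism $f^{1,1}\colon {}^{\ve{\cK}^!}\cX^{\alg}(L)^{\cK}\to {}^{\ve{\cK}^!}\cX(L,\scA)^{\cK}$; since it is a quasi-isomorphism on the associated graded of the weight filtration, the standard filtered-mapping-cone argument shows it is a homotopy equivalence. I expect the main obstacle to be Step~2: verifying that after matching the length-1 maps the residual discrepancy in the higher homotopies is genuinely cohomologically trivial, rather than merely grading-compatible, requires the careful bidegree bookkeeping in Lemmas~\ref{lem:ext-computation}--\ref{lem:staircase-no-maps-positive bidegree} and a check that the perturbation introduced to fix one weight does not reintroduce $\theta$-terms or disturb previously-matched components --- i.e., that the whole normalization procedure can be carried out $U$-equivariantly. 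This is exactly where the hypothesis that a $U$-equivariant model exists is used, and where the comparison with \cite{CZZSatellites}*{Proposition~5.10} and the forthcoming \cite{ZemUEq} does the heavy lifting.
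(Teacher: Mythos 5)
Your overall route is the same as the paper's: reduce idempotent-by-idempotent to the candidate using the staircase lemmas and Lemma~\ref{lem:non-standard-perturbation-lemma}, then match the idempotent-shifting maps. But two steps are genuinely missing. First, your claim that $U$-equivariance leaves only ``finitely many length $\le 2$ structure maps in the non-$\theta$ weights'' is false: $U$-equivariance constrains only the $\theta$-weighted components, and a $U$-equivariant model of ${}^{\ve{\cK}^!}\cX(L,\scA)^{\cK}$ may a priori carry components weighted by monomials of arbitrarily large weight in $\cR_0^!$ (and in $\cR_1^!$ at idempotent $(1,0)$). Removing these is the bulk of the actual proof: an induction on $\cR_0^!$-weight (Lemma~\ref{lem:cube-point-00}), alternating an $\Ext$-vanishing step via Lemma~\ref{lem:ext-computation} for even weights with a correction-then-nullhomotopy step for odd weights (where one first shows the weight-$n$ component is a chain map and compares it with $L_w,L_z$ on homology), the infinite composition of corrections converging only over the completed algebra $\ve{\cR}_0^!$ --- which is exactly why the statement is over $\ve{\cK}^!$. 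At idempotent $(1,0)$ the analogous elimination uses Lemma~\ref{lem:staircase-no-maps-positive bidegree}, not Lemma~\ref{lem:ext-computation}. Your proposal gestures at ``higher homotopies'' but supplies none of this induction, and as written asserts there is nothing to remove.

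Second, you never treat the diagonal (length-2) components $\Phi^{\pm K_1,\pm K_2}$ from idempotent $(0,0)$ to $(1,1)$. After the four idempotents and the length-1 maps are matched, the two hypercubes can still differ by their diagonal maps; since the candidate has vanishing length-2 map, one must prove that any chain map with the grading and weights of $\Phi^{K_1\cup K_2}$ (and likewise for the other orientations) is null-homotopic. This is Lemma~\ref{lem:length-2-homotopic}, and it is the one place where $U$-equivariance is truly indispensable (the paper's remark notes the statement fails without it, e.g.\ for $\theta$-multiples of $\Phi^{K_1}\circ \Phi^{K_2}$); citing \cite{ZemUEq} does not fill this in, since that result only provides existence of a $U$-equivariant model, which is already your hypothesis. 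Two smaller inaccuracies in your length-1 step: for $\Phi^{K_1}$ the nonvanishing on homology needed to apply Lemma~\ref{lem:basic-staircase-lemma} comes from the large-surgery quasi-isomorphism for $s_1\gg 0$, and killing the algebraic-grading-lowering piece $L_{zs}^{01}$ requires an infinite-sum homotopy exploiting its vanishing for $s_1\gg 0$, not just a pointwise application of Lemma~\ref{lem:ext-computation}; and for $\Phi^{K_2}$ the target ${}^{\cR_0^!}\cX^{\alg}(L)^{\cR_1}$ is not a staircase, so the argument instead shows that every grading-preserving $U$-equivariant homotopy automorphism of it is homotopic to the identity.
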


Recall that $\ve{\cK}^!$ is obtained by completing $\cK^!$ by the weight filtration on idempotent $(0,0)$. That is, we allow infinite sums of monomials in idempotent $(0,0)$ with increasing weight, such as $\sum_{i=0}^\infty (wz)^i$. In the subsequent Section~\ref{sec:uncompleted-algebra}, we prove that the homotopy equivalence holds also over the uncompleted algebra $\cK^!$. 

\begin{rem} In our proof of Proposition~\ref{prop:equivalence-completed-algebra}, we the $U$-equivariant assumption is only necessary to determine the length 2 map of the complex (i.e. the component of $\delta^{1,1}$ which shifts both idempotents). We note that our computation of ${}_{\cK} \cX(L)^{\cR_0}$ in \cite{CZZSatellites} never used the $U$-equivariance property. In our present paper, we assume the $U$-equivariance condition even where it is not necessary, because it simplifies our proofs. Nonetheless, the techniques of this paper extend easily to handle the cases where the $U$-equivariance condition is unncessary, such as for the bimodule ${}_{\cK} \cX(L)^{\cR_0}$. We leave these details as an exercise to the interested reader.
\end{rem}

\subsection{L-space knots and staircases}

In this section, we recall \cite{CZZSatellites}*{Theorem~5.14}:

\begin{lem}
\label{lem:free-resolution-homology} Suppose that $\cC^{\cR_0}$ is a type-$D$ structure admitting a $(\gr_{\ws},\gr_{\zs})$-bigrading whose homology module $\cH_{\cR_0}$ (i.e. the homology of $\cC^{\cR_0}\boxtimes \cR_0$ equipped with only $m_2$ non-vanishing) has support contained in $2\Z\times 2\Z$. Then $\cC^{\cR_0}$ is homotopy equivalent to a free resolution of $\cH_{\cR_0}$.
\end{lem}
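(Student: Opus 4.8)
\textbf{Proof proposal for Lemma~\ref{lem:free-resolution-homology}.}

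The plan is to build the free resolution incrementally, grading by grading, and to use the bigrading constraint to control the kernel at each stage. First I would fix a free resolution $F_\bullet \to \cH_{\cR_0}$ of the homology module over $\cR_0 = \bF[W,Z]$; since $\cR_0$ has global dimension $2$, we may assume $F_\bullet$ is concentrated in homological degrees $0,1,2$, and since $\cH_{\cR_0}$ is bigraded with support in $2\Z\times 2\Z$, we can take all the $F_i$ to be bigraded free modules whose generators sit in bidegrees in $2\Z\times 2\Z$ (choosing homogeneous generators at each step, with the syzygy modules inheriting the grading). This $F_\bullet$ is itself a type-$D$ structure $\cF^{\cR_0}$ whose box tensor with $\cR_0$ recovers $\cH_{\cR_0}$ with only $m_2$ acting; I would regard proving $\cC^{\cR_0}\simeq \cF^{\cR_0}$ as the goal.

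Next I would construct a type-$D$ morphism $f\colon \cC^{\cR_0} \to \cF^{\cR_0}$ inducing the identity on homology. The idea is to choose, for each generator $\xs$ of the free resolution $F_0$ in bidegree $(2a,2b)$, a cycle in $\cC^{\cR_0}\boxtimes \cR_0$ representing the corresponding homology class; this lands in the part of $\cC\boxtimes \cR_0$ of bidegree $(2a,2b)$, which is well-defined because the homology support lies in $2\Z\times2\Z$. Lifting these choices defines the leading ($F_0$) component of $f$. One then extends $f$ over $F_1$ and $F_2$ by the usual obstruction-theory argument: the defining relations of the resolution map to cycles in $\cC\boxtimes\cR_0$, these cycles are exact because $f$ already realizes the homology isomorphism in the relevant degree, and a choice of primitive gives the next component of $f$. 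Dually (or symmetrically) one builds a morphism $g\colon \cF^{\cR_0}\to \cC^{\cR_0}$.

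Finally I would show $f$ is a homotopy equivalence. The composite $g\circ f$ is an endomorphism of $\cC^{\cR_0}$ inducing the identity on $\cH_{\cR_0}$; since $\cF^{\cR_0}$ is a genuine free resolution, $f\circ g$ is chain homotopic to the identity of $\cF^{\cR_0}$ by the standard uniqueness of resolutions (this is the content one can quote from homological algebra, cf.\ \cite{Weibel}*{Theorem~2.2.6}, applied in the bigraded setting). To get $g\circ f\simeq \id_{\cC}$ I would invoke that $f$ is a quasi-isomorphism of bigraded chain complexes of free $\cR_0$-modules, hence a homotopy equivalence; alternatively, since $f\circ g\simeq \id$ and $f$ induces an isomorphism on homology, a short argument with mapping cones shows $f$ is a homotopy equivalence, so $g\circ f\simeq \id_{\cC}$ as well. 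The main obstacle is the first step: one must check that the bigrading constraint "support in $2\Z\times2\Z$" genuinely forces the resolution and all the chosen lifts/homotopies to be homogeneous of the correct parity, so that no cancellation is lost and the type-$D$ structure relation is respected — in other words, that the obstruction classes one needs to kill actually live in the expected graded pieces. This is where one uses the hypothesis most seriously, and it is essentially the bigraded refinement of the argument in \cite{CZZSatellites}*{Theorem~5.14}.
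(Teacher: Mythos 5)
Your route is genuinely different from the paper's: you work directly with bigraded homological algebra over $\cR_0=\bF[W,Z]$, comparing $\cC^{\cR_0}$ with a bigraded free resolution $\cF$ of $\cH_{\cR_0}$, whereas the paper transfers the $A_\infty$-module structure to the homology by homological perturbation, kills the transferred $m_3$ by the parity hypothesis (it shifts $(\gr_{\ws},\gr_{\zs})$ by $(1,1)$ mod $2$), and converts back to type-$D$ structures by box-tensoring with the explicit Koszul quasi-inverse ${}^{\cR_0}\Lambda^{\cR_0}$ of ${}_{\cR_0}[\cR_0]_{\cR_0}$, which only involves $m_2$ and $m_3$. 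Your strategy can be made to work in the setting where it is applied, but as written it has two gaps. First, the map you construct is really a morphism $g\colon \cF\to\cC$, not $\cC\to\cF$: choosing cycles in $\cC\boxtimes\cR_0$ representing the images of the generators of $F_0$ and then extending over $F_1$ and $F_2$ is the comparison-theorem construction \emph{out of} the resolution, and there is no dual construction mapping the differential module $\cC$ \emph{into} $\cF$ (since $\cC$ carries no homological grading, one cannot run the usual lifting argument in that direction), so your proof must rest on the single quasi-isomorphism $g$. Moreover, the place where the hypothesis is genuinely used is not homogeneity of the resolution (any bigraded module has a bigraded resolution); it is the $F_2$-extension step: the obstruction is the class $[g(\d_{\cF}\ws)]\in\cH_{\cR_0}$ for $\ws\in F_2$, which lies in odd-odd bidegree (generators of $F_1$ sit in odd-odd degree because the differential has bidegree $(-1,-1)$ and $\cR_0$ is concentrated in even-even degrees) and hence vanishes -- this is exactly the analogue of the vanishing of $m_3$ in the paper's argument, and you should say so explicitly.

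Second, and more seriously, your final step -- ``a quasi-isomorphism of bigraded complexes of free $\cR_0$-modules is a homotopy equivalence'' -- is asserted rather than proved, and it is not a formal fact here: $\cC\boxtimes\cR_0$ is a differential bigraded module with no homological grading, so the standard statement for bounded-below complexes of projectives does not apply directly, nor can you quote \cite{Weibel}*{Theorem~2.2.6} for $f\circ g$ without first checking that such an endomorphism of $\cF$ respects the homological grading (a degree-$(0,0)$ endomorphism could a priori have components like $F_0\to F_2$; killing these needs a small parity-and-injectivity argument). One would need to show, e.g., that the cone of $g$ -- an acyclic bigraded free differential module -- is contractible, which requires finiteness or reducedness hypotheses and an argument; note the lemma as stated does not even assume $\cC$ is finitely generated. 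This is precisely the difficulty the paper's proof is engineered to sidestep: over the field $\bF$, quasi-isomorphisms of $A_\infty$-modules are automatically invertible, and the two-step bimodule $\Lambda$ transports the resulting equivalence back to type-$D$ structures. (In the actual applications the resolutions are staircases, so $F_2=0$ and the $F_2$-obstruction never arises; the entire weight of your argument then falls on this last step.) Either supply that contractibility argument in the bigraded, suitably finite setting, or route through the $A_\infty$/Koszul formalism -- at which point you have essentially reproduced the paper's proof.
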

\begin{proof}
By definition, $\cH_{\cR_0}$ is quasi-isomorphic to a free resolution of itself. If $A$ is an algebra over a field, then in the category of $A_\infty$-modules over $A$,  quasi-isomorphisms admit inverses as $A_\infty$ module maps (see \cite{KellerNotes}*{Theorem, pg 14}), so $\cH_{\cR_0}$ is homotopy equivalent to a free resolution of itself as an $A_\infty$-module. 

 By homological perturbation theory, we can equip the underlying vector space of $\cH{}_{\cR_0}$ with an $A_\infty$-module structure, denoted $\tilde{\cH}_{\cR_0}$, so that 
\[
\tilde{\cH}_{\cR_0}\simeq \cC^{\cR_0}\boxtimes {}_{\cR_0} [\cR_0]{}_{\cR_0}.
\]
The bimodule ${}_{\cR_0} [\cR_0]_{\cR_0}$ is quasi-invertible, with quasi-inverse equal to the rank four type-$DD$ bimodule ${}^{\cR_0} \Lambda^{\cR_0}$ shown below:
\[
{}^{\cR_0} \Lambda^{\cR_0}:=
\begin{tikzcd}[column sep=1.3cm, row sep=1.3cm]
1
	\ar[r, "W\otimes 1+1\otimes W"]
	\ar[d, "Z\otimes 1+1\otimes  Z",swap]
&
\phi
	\ar[d, "Z\otimes 1+1\otimes Z"]\\
\psi
	\ar[r, "W\otimes 1+1\otimes W",swap] 
& 
\phi\psi.
\end{tikzcd}
\]
The above is well known, and essentially the Koszul resolution of a module over $\cR_0$, but we refer the reader to \cite{CZZSatellites}*{Lemma~5.11} for a proof in our present notation.
We observe that the action $m_{3}(-,a,b)$ from $\tilde{\cH}_{R_0}$ shifts both of the $\gr_{\ws}$ and $\gr_{\zs}$ grading by 1, modulo 2, for any $a,b\in \cR_0$. This implies that $m_{3}(-,a,b)$ vanishes on $\tilde{\cH}_{\cR_0}$ because $\cH$ is supported only in even $\gr_{\ws}$ and $\gr_{\zs}$ gradings. In particular, 
\[
\tilde{\cH}_{\cR_0}\boxtimes {}^{\cR_0} \Lambda^{\cR_0}\simeq \cH_{\cR_0} \boxtimes {}^{\cR_0}\Lambda^{\cR_0},
\]
from which we conclude that $\cH_{\cR_0}\simeq \tilde{\cH}_{\cR_0}.$
\end{proof}

\begin{rem} The above lemma gives another proof (or at least another perspective on the proof) of Ozsv\'{a}th and Szab\'{o}'s result \cite{OSlens} that the knot Floer complexes of L-space knots are homotopy equivalent to staircase complexes. Indeed, if $K$ is an L-space knot, its homology $\cHFK(K)$ can naturally be viewed as a monomial ideal in $\bF[W,Z]$, and is supported in even $\gr_{\ws}$ and $\gr_{\zs}$ grading. The above lemma implies that $\cCFK(K)$ is a free resolution of $\cHFK(K)$, which is a staircase complex. 
\end{rem}

\subsection{The complexes at each idempotent}
 
 In this section, we will prove the following:
 
 \begin{prop}\label{prop:equivalence-over-completed-algebra} Suppose that $L\subset S^3$ is a 2-component L-space link and that  ${}^{\cK^!} \cX(L)^{\cK}$ is a $U$-equivariant model of the link surgery complex of $L$. Write ${}^{\cR_{\veps}} \cX(L)^{\cR_{\veps'}}$ for the complex at idempotent $(\veps,\veps')$. Then there is a $U$-equivariant homotopy equivalence
 \[
 {}^{\cR_{\veps}} \cX(L)^{\cR_{\veps'}}\simeq {}^{\cR_{\veps}} \cX^{\alg}(L)^{\cR_{\veps'}}.
 \] When $\veps=0$, the equivalence is over the completed ring $\ve{\cR}_{0}^!$.
 \end{prop}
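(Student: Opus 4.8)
The plan is to prove the equivalence one idempotent at a time, treating the four idempotents $(\veps,\veps')\in\{0,1\}^2$ separately; the arguments are parallel, so I would present idempotent $(0,0)$ in full and indicate the simpler modifications for $(1,0)$, $(0,1)$ and $(1,1)$. Since ${}^{\cK^!}\cX^{\alg}(L)^{\cK}$ has already been shown to be well-defined up to $U$-equivariant homotopy equivalence, it suffices to produce one $U$-equivariant homotopy equivalence ${}^{\cR_\veps^!}\cX(L)^{\cR_{\veps'}}\simeq {}^{\cR_\veps^!}\cX^{\alg}(L)^{\cR_{\veps'}}$ at each idempotent.

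\emph{Step 1 (the underlying type-$D$ module).} By the structure of the link surgery complex recorded in Equation~\eqref{eq:C_epsilon=CFL-L-veps}, the type-$D$ module over $\cR_{\veps'}$ underlying ${}^{\cR_\veps^!}\cX(L)^{\cR_{\veps'}}$ is (a completion of) a sum over the Alexander grading $s_1$ of copies of $\cCFL$ of the appropriate sublink; at idempotent $(0,0)$, for instance, it is $\bigoplus_{s_1}$ of a resolution of $\bigoplus_{s_2}\cHFL(L,s_1,s_2)$ over $\cR_0$. A sublink of an L-space link is an L-space link, so each summand has homology a sum of copies of $\bF[U]$ concentrated in $(\gr_{\ws},\gr_{\zs})$-bidegrees lying in $2\Z\times 2\Z$, the bidegrees being read off from the $H$-function as in Section~\ref{sec:complexes-at-idempotents}. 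When $\veps'=0$, Lemma~\ref{lem:free-resolution-homology} gives a $U$-equivariant homotopy equivalence onto a free resolution of this homology, which is a direct sum of staircase complexes. When $\veps'=1$, the homology is already free over $\cR_1$ and the complex is formal, since $\cHFL$ of the relevant L-space sublink is free over the principal ideal domain $\bF[U]$ and tensoring with $\bF[T,T^{-1}]$ preserves formality. A grading computation then identifies the resulting model with the underlying module of $\cX^{\alg}(L)$ at that idempotent — the $\cC_{s_1}$, $T^{s_1}\cS$, $\ys_{s_1}$ or $T^{s_1}\ve{e}$ of Section~\ref{sec:complexes-at-idempotents}.

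\emph{Step 2 (matching the $DD$-structure map).} After Step 1 both sides have the same underlying module, a direct sum of staircases, and I would show the two $DD$-structure maps $\delta^{1,1}$ agree up to a change of basis. By the $U$-equivariance hypothesis and Definition~\ref{def:U equivariant modules}(2), the only component of $\delta^{1,1}$ weighted by a multiple of $\theta$ is the self-loop $\theta\otimes\id\otimes U$, which matches the candidate. The length-one components (weighted by $w,z$ at idempotents $(0,0)$ and $(0,1)$, by $\varphi_+,\varphi_-$ at $(1,0)$ and $(1,1)$) are chain maps between staircases of prescribed bidegree which are non-zero on homology — this uses the known action of $\cK$ on the surgery complex together with the grading formulas recalled earlier — and by Lemma~\ref{lem:basic-staircase-lemma} such a map is unique up to homotopy, so after a change of basis it coincides with the corresponding map of the candidate. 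For the weight-two components $L_{wz},L_{zw}$ at idempotent $(0,0)$, the type-$DD$ relation forces $\d_{\Int}(L_{wz})=L_z\circ L_w+\id\otimes U$ and $\d_{\Int}(L_{zw})=L_w\circ L_z+\id\otimes U$; the right-hand sides are chain maps of bidegree $(-1,-1)$ between staircases, hence null-homotopic by Lemma~\ref{lem:ext-computation}, so fillings exist, and any two fillings differ by a bidegree $(-1,-1)$ chain map, again null-homotopic, so the weight-two part matches the candidate up to homotopy. Finally, the remaining higher components — self-loops of weight $\ge 3$ at idempotent $(0,0)$, and all components of weight $\ge 2$ at the other idempotents — carry no $\theta$ after the normalization above, are built only from $w$'s and $z$'s (or $\varphi_\pm$'s), and so have bidegrees constrained by Lemma~\ref{lem:staircase-no-maps-positive bidegree} in positive bidegree and by Lemma~\ref{lem:ext-computation} and Lemma~\ref{lem:generalized-Ext-computation} in bidegree $(-1,-1)$; combined with the perturbation Lemma~\ref{lem:non-standard-perturbation-lemma} these let one cancel such terms iteratively, producing the desired $U$-equivariant homotopy equivalence. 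When $\veps=0$ this iteration generates monomials of arbitrarily large weight, so the equivalence and the automorphism realizing it live over the weight completion $\ve{\cR}_0^!$ of $\cR_0^!$, which is why the statement is formulated over $\ve{\cR}_0^!$ in that case.

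\emph{Expected main obstacle.} The hard part is the last portion of Step 2: running the iterative cancellation of higher-weight terms while staying $U$-equivariant and convergent in the completed algebra. Lemma~\ref{lem:non-standard-perturbation-lemma} is precisely the tool for this, but the induction requires careful book-keeping of which weight-$n$ terms are created when a weight-$(n-1)$ term is removed, and a check that at each stage the obstruction to the next cancellation lies in an $\Ext$-group killed by the staircase lemmas. Pinning down the candidate's $L_{wz}$ and $L_{zw}$ exactly — rather than only up to the ambiguity left after forgetting the $\id\otimes U$ term — is the subtler point, since there the type-$DD$ relation and the $\theta$-loop interact.
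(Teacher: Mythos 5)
Your overall strategy is the same as the paper's (work idempotent by idempotent, use Lemma~\ref{lem:free-resolution-homology} and homological perturbation to put everything on a direct sum of staircases, then match the structure maps weight by weight using staircase rigidity and Lemma~\ref{lem:non-standard-perturbation-lemma}, completing over $\ve{\cR}_0^!$ when $\veps=0$), but the final inductive step contains a genuine gap at idempotent $(0,0)$. You claim that every component of weight $\ge 3$ has bidegree which is either positive (killed by Lemma~\ref{lem:staircase-no-maps-positive bidegree}) or $(-1,-1)$ (killed by Lemmas~\ref{lem:ext-computation} and \ref{lem:generalized-Ext-computation}). This is false for the odd-weight components: the $wzw$- and $zwz$-weighted terms (and more generally all odd-length monomials in $w,z$) are not self-loops but maps $\cC_{s_1}\to \cC_{s_1\mp 1}$ with the \emph{same} $(\gr_{\ws},\gr_{\zs})$-bidegree as $L_w$ and $L_z$, namely $(-2,0)$ and $(0,-2)$. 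Chain maps of that bidegree between adjacent staircases are not null-homotopic in general (indeed $L_w$ and $L_z$ themselves are not), so none of the staircase lemmas you cite removes them, and the iteration stalls exactly there. The paper's proof of Lemma~\ref{lem:cube-point-00} spends its most delicate step on this point: the $wzwz$ coefficient of the type-$DD$ structure relation gives $L_z\circ L_{wzw}+L_{zwz}\circ L_w\simeq 0$, which shows that for each $s_1$ the maps $L_{wzw}$ and $L_{zwz}$ are simultaneously zero or non-zero on homology; one then defines $\epsilon(s_1)\in\Z/2$ recording this and performs the basis change $1+\epsilon(s_1)\,wz\otimes \id$ via Lemma~\ref{lem:non-standard-perturbation-lemma}, which replaces $L_{wzw}$ by $L_{wzw}+\epsilon(s_1)L_w$ (and similarly for $L_{zwz}$); only after this correction are the weight-3 terms null-homotopic and removable. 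Your proposal, as written, omits this mechanism, so the induction does not go through; the even-weight self-loops are the easy case, the odd-weight terms are the hard one.

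Two smaller points. First, your description of the weight-3 terms as ``self-loops'' is incorrect (they shift $A_1$ by $\pm 1$), which is symptomatic of the bidegree miscount above; the same caution applies at idempotent $(0,1)$, where odd-weight terms again have the bidegree of $W$- or $Z$-multiplication and are not excluded by a naive grading argument. Second, at idempotents $(1,0)$ and $(1,1)$ the paper pins down the $\varphi_\pm$-components by using the curvature relation $\mu_0=\varphi_+\varphi_-+\varphi_-\varphi_+$ to show $L_{\varphi_+}\circ L_{\varphi_-}\simeq \id$, and then Lemma~\ref{lem:grading-preserving-endo=id} to conclude they equal the identity; your appeal to ``the known action of $\cK$'' plus Lemma~\ref{lem:basic-staircase-lemma} gestures at an alternative but would need to be made precise (non-vanishing on homology alone, without the connectivity of the monomial ideal or the curvature argument, does not immediately force the map to be the identity).
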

 
 Proposition~\ref{prop:equivalence-completed-algebra} follows from Lemmas~\ref{lem:cube-point-00}, \ref{lem:cube-point-10} and \ref{lem:cube-points-01-11}, below.

\subsubsection{Idempotent $(0,0)$}

For each $s_1\in \Z+\lk(K_1,K_2)/2$, we write $X_{s_1}$ and $\cC_{s_1}$ for subspaces of ${}^{\cR_0^!}\cX(L)^{\cR_0}$ and ${}^{\cR_0^!}\cX^{\alg}(L)^{\cR_0}$ which have $A_1$-Alexander grading $s_1$. We view $X_{s_1}$ and $\cC_{s_1}$ as type-$D$ structures over $\cR_0$ by equipping them with the component of $\delta^{1,1}$ which is weighted by $1\in \cR_0^!$. 
\begin{lem}\label{lem:SDR-Cs1-Xs1} For all $s_1$, there is a strong deformation retraction
\[
\begin{tikzcd}
X_{s_1}\ar[r, "\Pi", shift left] \ar[loop left, "H"] & \cC_{s_1} \ar[l, "I",shift left].
\end{tikzcd}
\]
We recall that a \emph{strong deformation retraction} is a diagram as above so that 
\[\Pi \circ I=\bI_{\cC_{s_1}}, \quad I\circ \Pi=\bI_{X_{s_1}}+\d(H),\quad H\circ H=0, \quad H\circ I=0, \quad \Pi\circ H=0.
\]
\end{lem}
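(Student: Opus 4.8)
The plan is to build the strong deformation retraction by reducing to a statement about a single type-$D$ structure over $\cR_0$ and then invoking the homological-algebra machinery already set up in the excerpt. First I would observe that $X_{s_1}$, equipped only with the $1$-weighted component of $\delta^{1,1}$, is a type-$D$ structure over $\cR_0$ which carries a $(\gr_{\ws},\gr_{\zs})$-bigrading. Since it is (a direct summand in $A_1$-grading $s_1$ of) the link surgery complex, its $m_2$-homology is the corresponding piece of $\cHFL(L)$ in $A_1$-grading $s_1$, namely $\bigoplus_{s_2}\cHFL(L,s_1,s_2)$; because $L$ is an L-space link this module is supported in even $\gr_{\ws}$ and $\gr_{\zs}$ gradings (the generator of $\cHFL(L,s_1,s_2)$ sits in bigrading $(-2H_L(s_1,s_2),-2H_L(s_1,s_2)-2s_1-2s_2)$). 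Hence Lemma~\ref{lem:free-resolution-homology} applies and tells us $X_{s_1}$ is homotopy equivalent, as a type-$D$ structure over $\cR_0$, to the free resolution of its homology — and by construction $\cC_{s_1}$ is exactly (the canonical $2$-step free resolution / staircase model of) that homology. So $X_{s_1}\simeq \cC_{s_1}$ as type-$D$ structures over $\cR_0$.

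Next I would upgrade this homotopy equivalence to an \emph{actual} strong deformation retraction with $\Pi\circ I=\bI_{\cC_{s_1}}$ on the nose (not just up to homotopy), together with the side conditions $H^2=0$, $H\circ I=0$, $\Pi\circ H=0$. The standard device here is: start from a homotopy equivalence $(\Pi_0,I_0,H_0)$ between $X_{s_1}$ and $\cC_{s_1}$ (which exists by the previous paragraph), and then apply the usual ``cleanup'' lemmas for homotopy equivalences of chain complexes / type-$D$ structures over a field. Concretely, one first replaces $I_0$ by a genuine one-sided inverse of $\Pi_0$ (possible since $\Pi_0$ is a quasi-isomorphism and $\cC_{s_1}$ is a bounded complex of free modules, or directly because $\cC_{s_1}$ is a free resolution so maps out of it lift), arranging $\Pi_0\circ I_0=\bI_{\cC_{s_1}}$; then one modifies the homotopy $H_0$ by the standard formulas $H:=H_0(\bI-I_0\Pi_0)(\bI + \text{correction})$ — more precisely the three-step procedure $H_1 := H_0 + I_0\Pi_0 H_0$, $H_2 := H_1 H_1' $-type substitutions — to enforce successively $H\circ I=0$, $\Pi\circ H=0$, and $H^2=0$, each step preserving the previous identities. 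This is completely routine over a field (see e.g. the treatment of ``special deformation retracts'' in the homological perturbation literature), and I would cite it rather than reproduce it. One must also check the gradings are respected, which is automatic since every map in the construction is a composition of the original grading-preserving maps.

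Finally I would note that the equivalence should be taken over the completed ring $\ve{\cR}_0^!$ — but at idempotent $(0,0)$ the $1$-weighted structure maps of both $X_{s_1}$ and $\cC_{s_1}$ are ordinary type-$D$ structures over $\cR_0 = \bF[W,Z]$ (no completion needed to state the retraction of the underlying $D$-structures); the completion only enters when these $s_1$-pieces are assembled with the $w,z,\theta$-weighted arrows of \eqref{sec:candidate-idempotent-00}, which is handled in the later lemmas, not here. So for this lemma the completion is harmless.

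The main obstacle I expect is not the existence of the equivalence — that is essentially Lemma~\ref{lem:free-resolution-homology} — but rather the bookkeeping needed to turn an abstract homotopy equivalence into a \emph{strong} deformation retraction with all five side relations holding exactly and with $\Pi\circ I=\bI$ rather than $\bI+\d(\cdot)$; getting the order of the cleanup steps right so that fixing one relation does not destroy another is the delicate part. It is standard, but it is the only thing here that requires care.
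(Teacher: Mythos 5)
Your proposal follows essentially the same route as the paper's proof: identify the homology of $X_{s_1}$ with $\bigoplus_{s_2}\cHFL(L,s_1,s_2)$, note that it is supported in even $(\gr_{\ws},\gr_{\zs})$-gradings since $L$ is an L-space link, apply Lemma~\ref{lem:free-resolution-homology} to obtain a homotopy equivalence with the staircase $\cC_{s_1}$, arrange $\Pi\circ I=\bI_{\cC_{s_1}}$ on the nose, and then upgrade to a strong deformation retraction by the standard perturbation-theoretic cleanup (the paper uses the explicit Lambe--Stasheff formula). The only real difference is how $\Pi\circ I=\bI$ is forced: the paper observes that this grading-preserving self-equivalence of the reduced staircase is invertible (indeed equal to the identity, by Lemma~\ref{lem:grading-preserving-endo=id}) and composes with its inverse, whereas your appeal to lifting maps out of the free resolution only yields $\Pi\circ I\simeq\bI$ up to homotopy, so you would still need this invertibility observation to get strict equality.
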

\begin{proof}
The homology of $X_{s_1}$ and $\cC_{s_1}$ are both isomorphic to
\begin{equation}
\bigoplus_{s_2\in \Z+\lk(K_1,K_2)/2}\cHFL(L,s_1,s_2). \label{eq:C-s-homology}
\end{equation}
Since $L$ is an L-space link, the group appearing in Equation~\eqref{eq:C-s-homology} is supported only in even $(\gr_{\ws},\gr_{\zs})$ gradings. By Lemma~\ref{lem:free-resolution-homology}, $X_{s_1}$ must be homotopy equivalent to a staircase complex.  We have, therefore, a diagram
\[
\begin{tikzcd}
X_{s_1}\ar[r, "\Pi", shift left] \ar[loop left, "Q"] & \cC_{s_1} \ar[l, "I",shift left] \ar[loop right, "J"]
\end{tikzcd}
\]
where $\Pi\circ I=\bI_{X_s}+\d(J)$ and $I\circ \Pi=\bI_{\cC_s}+\d(Q)$. As a first step, we observe that since $\cC_{s_1}$ is reduced (i.e. the differential has image in the maximal ideal $(W,Z)$), the homotopy equivalence $\Pi \circ I\colon \cC_{s_1}\to \cC_{s_1}$ must be invertible as a type-$D$ module map (alternatively, we could use Lemma~\ref{lem:grading-preserving-endo=id} which is particular to staircases). By composing $\Pi$ with the inverse of this map, we may assume that $\Pi\circ I=\bI_{\cC_{s_1}}$.

The following formula of Lambe and Stasheff \cite{LambeStasheff}*{Section~2.1} gives a homotopy $H\colon X_{s_1}\to X_{s_1}$ which turns the above diagram into a strong deformation retraction:
\[
\begin{split} Q'&=(\bI+I\circ \Pi)\circ Q\circ (\bI+I\circ \Pi)\\
H&=Q'\circ \d_{X_{s_1}}\circ Q',
\end{split}
\]
completing the proof.
\end{proof}

\begin{lem}
\label{lem:cube-point-00} If $L$ is a 2-component L-space link and ${}^{\bm{\cR}_0^!} \cX(L)^{\cR_0}$ is $U$-equivariant, then there is a $U$-equivariant homotopy equivalence of $DD$-bimodules ${}^{\bm{\cR}_0^!} \cX(L)^{\cR_0}\to {}^{\bm{\cR}_0^!} \cX^{\alg}(L)^{\cR_0}.$ 
\end{lem}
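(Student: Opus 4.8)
The plan is to promote the idempotent-wise strong deformation retraction of Lemma~\ref{lem:SDR-Cs1-Xs1} to a homotopy equivalence of the full $DD$-bimodules over $(\bm{\cR}_0^!,\cR_0)$ via homological perturbation theory, and then to upgrade the transferred structure on $\cX^{\alg}$ to agree (up to $U$-equivariant homotopy equivalence) with the candidate bimodule built in Section~\ref{sec:complexes-at-idempotents}. First I would assemble the strong deformation retractions $(\Pi_{s_1},I_{s_1},H_{s_1})$ over all $s_1\in\Z+\lk(K_1,K_2)/2$ into a single strong deformation retraction between the underlying complexes of ${}^{\bm{\cR}_0^!}\cX(L)^{\cR_0}$ and ${}^{\bm{\cR}_0^!}\cX^{\alg}(L)^{\cR_0}$, where for the moment we equip the target only with the internal type-$D$ differential (the $1\in\cR_0^!$ component). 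The remaining components of $\delta^{1,1}$ on ${}^{\bm{\cR}_0^!}\cX(L)^{\cR_0}$ — the parts weighted by $w$, $z$, $\theta$, $wz$, $zw$ and higher-weight monomials — constitute a Mauer–Cartan-type perturbation of this internal differential in the sense of Section~\ref{sec:complexes-at-idempotents} and Lemma~\ref{lem:non-standard-perturbation-lemma}. Applying the homological perturbation lemma (with the completion over the weight filtration on idempotent $(0,0)$ ensuring the relevant infinite sums converge), I would transfer this perturbed $DD$-structure across the retraction to obtain a $DD$-bimodule structure $\delta^{1,1}_{\mathrm{tr}}$ on the underlying space of ${}^{\bm{\cR}_0^!}\cX^{\alg}(L)^{\cR_0}$ that is $DD$-homotopy equivalent to ${}^{\bm{\cR}_0^!}\cX(L)^{\cR_0}$, and which is $U$-equivariant because the retraction data are grading-preserving and the perturbation is $U$-equivariant by hypothesis (so the transferred $\theta$-weighted term is exactly $\theta|U$ on each generator and no other $\theta$-multiples appear).

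The second and main step is to show that $\delta^{1,1}_{\mathrm{tr}}$ is $U$-equivariantly homotopy equivalent to the structure $\delta^{1,1}_{\alg}$ specified in Section~\ref{sec:candidate-idempotent-00}, i.e.\ that the transferred $w$-weighted map is chain homotopic to $L_w$, the $z$-weighted map to $L_z$, and that one may arrange the $wz$- and $zw$-weighted maps to satisfy $\d_{\Int}(L_{zw})=L_w\circ L_z+\id\otimes U$ and $\d_{\Int}(L_{wz})=L_z\circ L_w+\id\otimes U$ while killing all higher-weight terms. For the length-one pieces: the transferred $w$- and $z$-weighted maps are chain maps of the staircases $\cC_{s_1}$ with the prescribed $(\gr_{\ws},\gr_{\zs})$-bidegrees $(-2,0)$ and $(0,-2)$, and they are nonzero on homology (this is forced by the fact that ${}^{\bm{\cR}_0^!}\cX(L)^{\cR_0}$ computes the correct knot-lattice homology in the adjacent Alexander grading — the same input used in \cite{CZZSatellites}), so Lemma~\ref{lem:basic-staircase-lemma} identifies them with $L_w$ and $L_z$ up to type-$D$ chain homotopy. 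I would then use these chain homotopies, together with the twisted-complex formalism and Lemma~\ref{lem:non-standard-perturbation-lemma}, to change basis and arrange equality on the nose while tracking the induced changes to the length-two and higher terms. For the $\theta$-weighted self-arrow, $U$-equivariance forces it to be exactly $\id\otimes U$; for the weight-two $(wz)$ and $(zw)$ terms, the $DD$-relation forces their internal differentials to equal $L_w L_z+\id\otimes U$ and $L_z L_w+\id\otimes U$ respectively (up to the chain homotopies just introduced), and any two choices differ by a cycle in the relevant morphism complex, which by Lemma~\ref{lem:ext-computation}/Lemma~\ref{lem:generalized-Ext-computation} is null-homotopic in bidegree $(-1,-1)$ — so the weight-two data agree up to homotopy.

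Finally, I would kill all monomials of weight $\ge 3$ in the transferred structure by a descending induction on weight: given that the structure has been matched with $\delta^{1,1}_{\alg}$ through weight $k$, the $DD$-relation shows the weight-$(k+1)$ discrepancy is a cycle in a morphism complex whose relevant graded piece is computed by iterated compositions of $L_w,L_z$ and lands in a negative $(\gr_{\ws},\gr_{\zs})$-bidegree where, by Lemma~\ref{lem:ext-computation} and Lemma~\ref{lem:staircase-no-maps-positive bidegree}, there are no nontrivial classes, so the discrepancy is a boundary and can be absorbed by a weight-$(k+1)$ homotopy; the completion guarantees this infinite sequence of homotopies composes to a genuine $U$-equivariant homotopy equivalence. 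Combining the transfer equivalence of Step~1 with the matching equivalence of Steps~2–3 yields the desired ${}^{\bm{\cR}_0^!}\cX(L)^{\cR_0}\simeq{}^{\bm{\cR}_0^!}\cX^{\alg}(L)^{\cR_0}$. The main obstacle I anticipate is bookkeeping in the weight-induction: keeping the grading/bidegree constraints tight enough at each weight that the relevant $\Ext$-type obstruction groups genuinely vanish, and verifying that the perturbation-lemma sums and the infinite composition of homotopies converge in $\bm{\cR}_0^!$ — this is exactly why the statement is phrased over the completed algebra, and it is the place where the argument must be carried out with care rather than cited.
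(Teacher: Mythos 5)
Your overall architecture is the same as the paper's: assemble the strong deformation retractions of Lemma~\ref{lem:SDR-Cs1-Xs1}, transfer the full $\delta^{1,1}$ by the homological perturbation lemma over the completed algebra, identify the $w$- and $z$-weighted components with $L_w,L_z$ via Lemma~\ref{lem:basic-staircase-lemma}, fix the weight-two terms using Lemma~\ref{lem:ext-computation}, and then remove higher-weight terms by an induction on weight whose infinite composition converges in $\bm{\cR}_0^!$. Up through the weight-two analysis your outline is essentially the paper's proof (the paper simplifies your Step~2 slightly by \emph{defining} the candidate's $L_w,L_z$ to be the transferred $w$- and $z$-components, which is legitimate since the candidate is only specified up to such choices).

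The gap is in your final induction step, at the odd weights. You claim that the weight-$(k+1)$ discrepancy is a cycle in a bidegree where Lemma~\ref{lem:ext-computation} and Lemma~\ref{lem:staircase-no-maps-positive bidegree} give no nontrivial classes, hence a boundary. That is true at even weights, where the discrepancy has $(\gr_{\ws},\gr_{\zs})$-bidegree $(-1,-1)$, but it is false at odd weights: the $wzw$- and $zwz$-weighted components (and their higher odd-weight analogues) are chain maps $\cC_{s_1}\to\cC_{s_1\mp1}$ of bidegree $(-2,0)$ and $(0,-2)$ --- exactly the bidegrees of $L_w$ and $L_z$ --- and in those bidegrees the space of chain maps modulo homotopy is \emph{not} zero; the discrepancy may well be nonzero on homology, hence not null-homotopic, and no homotopy can absorb it. This is where the paper does genuinely extra work: the $wzwz$-coefficient of the $DD$-relation (Equation~\eqref{eq:wzwz-relation}) shows that $L_{wzw}\co\cC_{s_1}\to\cC_{s_1-1}$ and $L_{zwz}\co\cC_{s_1-1}\to\cC_{s_1}$ are simultaneously zero or nonzero on homology, which allows one to record the obstruction in a single function $\epsilon(s_1)\in\Z/2$ and then apply Lemma~\ref{lem:non-standard-perturbation-lemma} with $h(\xs)=\epsilon(s_1)\,wz\otimes\xs$; this \emph{isomorphism} (not a homotopy of a fixed map) shifts the odd-weight components by $\epsilon(s_1)L_w$ and $\epsilon(s_1+1)L_z$, after which they are null-homotopic and can be removed as you describe. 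Without this consistency argument and diagonal change of basis, your induction does not close. (A secondary, smaller point: your justification that the transferred $w$- and $z$-components are nonzero on homology is vague; the paper argues this cleanly via Koszul duality, since the $w$-component encodes the $W_1$-action on $\cHFL(L)$, which is nontrivial because $\cHFL(L)$ is a free nonzero $\bF[U_1]$-module, and one also needs the mod-$2$ grading observation that these components preserve the algebraic grading.)
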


\begin{proof} We will write the structure map ${}^{\ve{\cR}_0^!}\cX(L)^{\cR_0}$ as
\[
\delta^{1,1}=d^{1,1}+\a^{1,1},
\]
where $d^{1,1}$ consists only of the differentials weighted by $1\in \cR_0^!$, and $\a^{1,1}$ consists of the components weighted by monomials of $\cR_0^!$ with weight 1 or greater.

We will write $\delta^{1,1}_{\alg}$ also as $d^{1,1}_{\alg}+\a_{\alg}^{1,1}$ where $d^{1,1}_{\alg}$ and $\a_{\alg}^{1,1}$ are defined similarly to the above.  Lemma~\ref{lem:SDR-Cs1-Xs1} gives a strong deformation retraction of type-$D$ modules over $\cR_0$
\[
\begin{tikzcd}\left(\displaystyle \bigoplus_{s_1} X_{s_1},d^{1,1}\right)  \ar[loop left, "H"] \ar[r, "\Pi",shift left] & \left(\displaystyle\bigoplus_{s_1} \cC_{s_1},d^{1,1}_{\alg}\right) \ar[l, "I",shift left]
\end{tikzcd}.
\]
The endomorphism $(1+H\circ \a^{1,1})$ is invertible when we work over $\ve{\cR}_0^!$, since the inverse is given by
\[
\sum_{i=0}^\infty (H\circ \a^{1,1})^n.
\]
The homological perturbation lemma for type-$DD$ modules (compare \cite{LambeStasheff}*{Section~2}, which states the perturbation lemma for chain complexes) gives a homotopy equivalence (in fact, a strong deformation retraction)
\[
\left(\bigoplus_{s_1} X_{s_1},d^{1,1}\right)\simeq \left(\bigoplus_{s_1} \cC_{s_1},d^{1,1}_{\alg}+\b^{1,1}\right) 
\]
where 
\[
\b^{1,1}=\sum_{n \ge 0} \Pi \circ \a^{1,1}\circ (H\circ \a^{1,1})^n \circ I.
\]
Since $H\circ H=0$, it is easy to see that $\b^{1,1}$ is $U$-equivariant (i.e. the only self arrows consist of $\theta\otimes \id\otimes U$), as are the maps appearing in the above homotopy equivalence.

By replacing ${}^{\ve{\cR}_0^!} \cX(L)^{\cR_0}$ with $(\bigoplus_{s_1} \cC_{s_1}, d^{1,1}_{\alg}+\b^{1,1})$,
we can regard ${}^{\ve{\cR}_0^!} \cX(L)^{\cR_0}$ and ${}^{\ve{\cR}_0^!} \cX^{\alg}(L)^{\cR_0}$ as having the same underlying vector space, and also the same induced type-$D$ structures over $\cR_0$. We will write $\cX(L)$ and $\cC_{s_1}$ for the common underlying vector spaces and type-$D$ structures over $\cR_0$. We will write $\delta^{1,1}$ for the structure maps from $\cX(L)$, and we will write $\delta_{\alg}^{1,1}$ for the structure maps from $\cX^{\alg}(L)$.

Next,  we observe that the $w$ and $z$ weighted terms of $\delta^{1,1}$ have the same $(\gr_{\ws},\gr_{\zs})$-gradings as the maps $L_w$ and $L_z$, defined in Section~\ref{sec:complexes-at-idempotents}. Furthermore, they cannot be null-homotopic as chain maps from $\cC_{s_1}$ to $\cC_{s_1\pm 1}$ because by Koszul duality the $w$ weighted component of $\delta^{1,1}$ corresponds to the action of $W_1$ on $\cHFL(L)$, which acts non-trivially on homology because $W_1Z_1=U_1$, and $\cHFL(L)$ is a free and non-zero $\bF[U_1]$-module by definition.  A similar statement holds for the $z$-weighted component. Also, since the $w$ and $z$ weighted components preserve the $(\gr_{\ws},\gr_{\zs})$-gradings modulo 2, it is easy to see that they must preserve the algebraic (staircase) grading on $\cC_{s_1}$. Therefore in the construction of ${}^{\cR_0^!}\cX^{\alg}(L)^{\cR_0}$, we can take $L_w$ and $L_z$ to be $w$ and $z$ components of $\delta^{1,1}$ from $\cX(L)$.

Next, we consider the components of $\delta^{1,1}$ which are weighted by $wz$ or $zw$. Call these $L_{wz}$ and $L_{zw}$, respectively. \emph{A-priori}, $L_{wz}$ and $L_{zw}$ may have summands which increase the algebraic grading by $1$, or decrease it by 1. Write $L_{wz}^{10}, L_{zw}^{10}$ and $L_{wz}^{01}, L_{zw}^{01}$ for each of these summands, respectively.  We now modify $({}^{\cR_0^!}\cX(L)^{\cR_0},\delta^{1,1})$ by a homotopy equivalence so that $L_{wz}$ and $L_{zw}$ increase the algebraic grading, i.e. so that $L_{wz}^{01}$ vanishes. By Lemma~\ref{lem:ext-computation}, the maps $L_{wz}^{01}$ and $L_{zw}^{01}$ are null-homotopic. Write $h_{wz},h_{zw}\colon \cC_{s_1}\to \cC_{s_1}\otimes \cR_0$ for the null-homotopies. We define an endomorphism $h=wz\otimes h_{wz}+zw\otimes h_{zw}$ of $({}^{\cR_0^!} \cX(L)^{\cR_0},\delta^{1,1})$. Note that $1+h$ is invertible over the completed ring $\ve{\cR}_0^!$, so if we apply Lemma~\ref{lem:non-standard-perturbation-lemma}, we see that $({}^{\ve{\cR}_0^!}\cX(L)^{\cR_0},\delta^{1,1})$ is isomorphic to a $DD$-bimodule whose underlying vector space is $\cX(L)$, and whose $w$, $z$, $wz$ and $zw$ weighted differentials coincide with those of $\delta^{1,1}_{\alg}$ (but which may have differentials weighted by algebra elements in $\bm{\cR}_0^!$ with weight larger than 2). Abusing notation slightly, write $\delta^{1,1}$ for the new structure map on ${}^{\cR_0^!} \cX(L)^{\cR_0}$ after this perturbation.

We now eliminate the higher weight differentials of $\delta^{1,1}$, by an inductive procedure which repeats the ideas used to modify the weight 1 and 2 differentials of $\delta^{1,1}$.  Write $L_{wzw}$ and $L_{zwz}$ for the summands of $\delta^{1,1}$ which are weighted by $wzw$ and $zwz$, respectively. We observe that $L_{zwz}$ and $L_{wzw}$ have the same $(\gr_{\ws},\gr_{\zs})$-gradings as $L_w$ and $L_z$. By considering the $wzw$ coefficient of the type-$DD$-structure relation for $\delta^{1,1}$, we see that
\[
\d_{\Int}(L_{wzw})+L_w\circ L_{wz}+L_{zw}\circ L_z=0.
\]
Since $\delta_{\alg}^{1,1}$ satisfies the type-$DD$ structure relation, and $\delta_{\alg}^{1,1}$ has no components with $\cR_0^!$-weight larger than 2, and since $\delta_{\alg}^{1,1}$ coincides with $\delta^{1,1}$ on components with $\cR_0^!$ weight 2 or less, we conclude that $L_{w} \circ L_{wz}+L_{zw}\circ L_z=0$. In particular, $L_{wzw}$ is a chain map when viewed as a type-$D$ morphism from $\cC_{s_1}$ to $\cC_{s_1-1}$. The same logic shows that $L_{zwz}$ is also a chain map from $\cC_{s_1}$ to $\cC_{s_1+1}$.

When we examine the $wzwz$ coefficient of the type-$DD$ structure relation for $\delta^{1,1}$, we see that
\begin{equation}
0\simeq L_z\circ L_{wzw}+L_{zwz}\circ L_w+L_{wz}\circ L_{wz}=L_z\circ L_{wzw}+L_{zwz}\circ L_w, \label{eq:wzwz-relation}
\end{equation}
as type-$D$ endomorphisms of $\cC_{s_1}$. (The homotopy is given by $L_{wzwz}$ and we are also using the fact that $L_{wz}\circ L_{wz}=0$ because $L_{wz}$ increases the algebraic grading by 1). Recall that the maps $L_w$ and $L_z$ are, up to chain homotopy, the unique chain maps from $\cC_{s_1}$ to $\cC_{s_1\pm 1}$ in their grading which are non-zero on homology.  Equation~\eqref{eq:wzwz-relation} implies that, for each $s_1$, the maps
\[
L_{zwz}\colon \cC_{s_1-1}\to \cC_{s_1}\quad \text{and} \quad L_{wzw}\colon \cC_{s_1}\to \cC_{s_1-1}
\]
are either both non-zero on homology, or both zero on homology (though the property of being zero or non-zero may depend on $s_1$). Define a function
\[
\epsilon\colon \Z+\lk(K_1,K_2)/2\to \Z/2
\]
as follows. We set $\epsilon(s_1)=1$ if and only $L_{zwz}\colon \cC_{s_1-1}\to \cC_{s_1}$ and $L_{wzw}\colon \cC_{s_1}\to \cC_{s_1-1}$ are non-zero on homology. Then we define a map 
\[
h\colon {}^{\ve{\cR}_0^!} \cX(L)^{\cR_0}\to {}^{\ve{\cR}_0^!} \cX(L)^{\cR_0}
\] 
which sends $\xs\in \cC_{s_1}$ to $\epsilon(s_1) wz\otimes \xs$. Observe that $1+h$ is invertible over the completed algebra $\ve{\cR}_0^!$, so we can apply Lemma~\ref{lem:non-standard-perturbation-lemma} using the function $h$ described above, to modify $({}^{\ve{\cR}_0^!}\cX(L),\delta^{1,1})$ to an isomorphic type-$DD$ structure whose $wzw$ terms change to $L_{wzw}+\epsilon(s_1)L_w$ and whose $zwz$ terms change to $L_{zwz}+\epsilon(s_1+1) L_z$. The components weighted by $\cR_0^!$-elements of weight less than 3 are unchanged. After this modification, all of the maps $h_{zwz}$ and $h_{wzw}$ are still chain maps (when viewed as maps from $\cC_{s_1}$ to $\cC_{s_1\pm 1}$), however they are all null-homotopic. Abusing notation, we write $({}^{\ve{\cR}_0^!} \cX(L)^{\cR_0},\delta^{1,1})$ for the complex with this modified structure map.

 We let $j_{zwz}\colon \cC_{s_1}\to \cC_{s_1+1}$ and $j_{wzw}\colon \cC_{s_1}\to \cC_{s_1-1}$ be suitably graded null-homotopies. We define a map 
 \[
 h'\colon {}^{\ve{\cR}_0^!}\cX(L)^{\cR_0}\to {}^{\ve{\cR}_0^!}\cX(L)^{\cR_0}
 \] 
 by sending $\xs\in \cC_{s_1}$ to $wzw\otimes j_{zwz}(\xs)+ zwz\otimes j_{wzw}(\xs)$.  After modifying $({}^{\ve{\cR}_0} \cX(L)^{\cR_0},\delta^{1,1})$ using Lemma~\ref{lem:non-standard-perturbation-lemma} with the function $h'$, we have produced a homotopy equivalent model whose differential coincides with our candidate model $\delta^{1,1}_{\alg}$ on monomials of $\cR_0^!$ up to $\cR_0^!$-weight 3. Furthermore, this model and the homotopy equivalence are easily seen to be $U$-equivariant. We again will write $({}^{\ve{\cR}_0} \cX(L)^{\cR_0},\delta^{1,1})$ for the new complex.
 
A similar argument works to eliminate components of $\delta^{1,1}$ which are weighted by $wzwz$ or $zwzw$. Write 
 \[
 L_{wzwz},L_{zwzw}\colon \cC_{s_1}\to \cC_{s_1}
 \]
 for the components of $\delta^{1,1}$ which are weighted by $wzwz$ and $zwzw$, respectively. Similar to before, it is straightforward to see that $L_{wzwz}$ and $L_{zwzw}$ are chain maps. Furthermore, we observe that they both have $(\gr_{\ws},\gr_{\zs})$ degree $(-1,-1)$. Write $\cC_{s_1}=(\d_{s_1}\colon \cC^1_{s_1}\to \cC_{s_1}^0)$. The map $L_{wzwz}$ decomposes as a sum $L_{wzwz}^{10}+L_{zwzw}^{01}$, where $L_{wzwz}^{ij}$ maps algebraic grading $j$ to algebraic grading $i$. Since $\d_{s_1}$ is injective, we see that $L_{wzwz}^{10}$ must vanish. The map $L_{zwzw}^{01}$ is null-homotopic by Lemma~\ref{lem:ext-computation}. By a similar argument, $L_{wzwz}$ is also null-homotopic. Therefore, we may remove the differentials weighted by  $wzwz$ and $zwzw$ by applying Lemma~\ref{lem:non-standard-perturbation-lemma}.
 
Proceeding inductively, if we construct a model for $({}^{\ve{\cR}_0^!}\cX(L)^{\cR_0},\delta^{1,1})$ so that $\delta^{1,1}$ and $\delta_{\alg}^{1,1}$ agree up to $\cR_0^!$-weight $n$, for some $n\ge 4$, we now modify $({}^{\ve{\cR}_0^!}\cX(L)^{\cR_0},\delta^{1,1})$ using Lemma~\ref{lem:non-standard-perturbation-lemma} so that $\delta^{1,1}$ agrees with $\delta_{\alg}^{1,1}$ up to monomials of $\cR_0^!$-weight $n+1$. If $n+1$ is even, we may use the same argument as for removing the $wzwz$ and $zwzw$ weighted terms. If $n+1$ is odd, we may use the same argument as for removing the $wzw$ and $zwz$ weighted terms.

This produces an infinite sequence of maps
\[
f_n\colon \cX(L)\to \bm{\cR}_0^!\otimes \cX(L)\otimes \cR_0
\]
which are isomorphisms of $DD$-modules (for suitable structure maps $\delta^{1,1}$). The maps $f_n$ are all equal to $\id+f_n'$, where $f_n'$ involves only monomials of $\bm{\cR}_0^!$ of length at least $n$. Therefore the infinite composition $\cdots f_3\circ f_2\circ f_1$ is well-defined, and gives homotopy equivalence between our original type-$D$ structure ${}^{\bm{\cR}_0^!}\cX(L)^{\cR_0}$ and ${}^{\bm{\cR}_0^!} \cX_{\alg}(L)^{\cR_0}$. As described earlier, all of these maps are $U$-equivariant, and it is easy to see that their inverses are as.
\end{proof}

\subsubsection{Idempotent $(1,0)$}

We now consider idempotent $(1,0)$, i.e. $\ve{I}_1\cdot \cX(L)\cdot \ve{I}_0$.

\begin{lem}\label{lem:cube-point-10} Suppose that ${}^{\cR_1^!} \cX(L)^{\cR_0}$ has $U$-equivariant structure maps. Then there is a $U$-equivariant homotopy equivalence ${}^{\cR_1^!} \cX(L)^{\cR_0}\simeq {}^{\cR_1^!} \cX^{\alg}(L)^{\cR_0}$.
\end{lem}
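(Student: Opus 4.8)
The plan is to run the same argument as for Lemma~\ref{lem:cube-point-00}, taking advantage of the fact that at idempotent $(1,0)$ the relevant type-$D$ structures over $\cR_0$ are all copies of the single staircase $\cS=\cCFK(K_2)$; in fact the proof should be shorter, because no higher-weight terms will survive.

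\emph{Reducing to the candidate's underlying object.} For each $s_1\in\Z$ I would let $X_{s_1}\subset{}^{\cR_1^!}\cX(L)^{\cR_0}$ be the summand in $A_1$-grading $s_1$, regarded as a type-$D$ structure over $\cR_0$ via the $1\in\cR_1^!$-weighted part of $\delta^{1,1}$. Since $L$ is a $2$-component L-space link, $K_2$ is an L-space knot, so the homology of $X_{s_1}$ with respect to this differential is isomorphic to $\cHFK(K_2)$ and is supported in even $(\gr_{\ws},\gr_{\zs})$-gradings; Lemma~\ref{lem:free-resolution-homology} then shows $X_{s_1}$ is homotopy equivalent to a staircase, which by Lemma~\ref{lem:basic-staircase-lemma} we may identify with $T^{s_1}\cS$. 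Exactly as in Lemma~\ref{lem:SDR-Cs1-Xs1} I would promote this to a strong deformation retraction with homotopy $H$ satisfying $H^2=0$, $\Pi H=0$, $HI=0$, and then transfer $\delta^{1,1}$ along it by the perturbation lemma for type-$DD$ modules, as in the proof of Lemma~\ref{lem:cube-point-00}. The same three identities force the transferred structure and the transfer maps to remain $U$-equivariant; the infinite sums involved converge over the uncompleted algebra $\cR_1^!$ because only finitely many terms contribute to any fixed $\cR_1^!$-weight. After this step I may assume ${}^{\cR_1^!}\cX(L)^{\cR_0}$ and ${}^{\cR_1^!}\cX^{\alg}(L)^{\cR_0}$ have the same underlying object $\bigoplus_{s_1}T^{s_1}\cS$ with the same staircase differentials.

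\emph{Identifying the remaining structure maps.} I would then pin down the $\varphi_+$-, $\varphi_-$- and $\theta$-weighted components of $\delta^{1,1}$. The $\theta$-component is $\id\otimes U$ by the $U$-equivariance hypothesis. The $\varphi_+$-coefficient of the type-$DD$ relation reads $\d_{\Int}(L_{\varphi_+})=0$, so $L_{\varphi_+}\colon T^{s_1}\cS\to T^{s_1+1}\cS$ is a chain map; it is grading-preserving, since $\delta^{1,1}$ and $\delta^{1,1}_{\alg}$ are homogeneous of the same degree and the candidate's $\varphi_+|1$ has $(\gr_{\ws},\gr_{\zs})$-bidegree $(0,0)$; and it is non-zero on homology, since under Koszul duality it encodes the action of the invertible element $T\in\cK$. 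Because $H_*(\cS)=\cHFK(K_2)$ has rank at most $1$ in each bidegree, $L_{\varphi_+}$ must induce the identity on homology, hence be a homotopy equivalence, hence equal $\id$ by Lemma~\ref{lem:grading-preserving-endo=id}; likewise for $L_{\varphi_-}$. Finally, any nonzero monomial of $\cR_1^!$ of weight $\ge 2$ is either a multiple of $\theta$ --- contributing nothing by $U$-equivariance --- or an alternating word in $\varphi_+,\varphi_-$ of weight $n$, which carries bidegree $(-n,-n)$, so that its component of $\delta^{1,1}$ is a map between copies of $\cS$ of bidegree $(n-1,n-1)$, strictly positive for $n\ge 2$; the argument of Lemma~\ref{lem:staircase-no-maps-positive bidegree} (which applies verbatim to a graded map between copies of a fixed staircase) shows this vanishes. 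Thus $\delta^{1,1}=\delta^{1,1}_{\alg}$ on the transferred model, and the strong deformation retraction of the first step is the desired $U$-equivariant homotopy equivalence.

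\emph{The main obstacle.} The only real content is in the first step; the rest is grading bookkeeping. The point requiring genuine care is to make precise the $(\gr_{\ws},\gr_{\zs})$-degrees of $\varphi_\pm\in\cK^!$ used above (equivalently, that each $\varphi$-word of weight $n$ contributes a map of bidegree $(n-1,n-1)$), and to confirm that the homology of $X_{s_1}$ is exactly $\cHFK(K_2)$, i.e. that the free basepoint appearing in the raw description of $\cC_{(1,0)}$ does not contribute an extra polynomial variable to the $\cR_0$-module structure. Both are consequences of the conventions of \cite{ZemBordered,ZemKoszul}, but should be spelled out carefully.
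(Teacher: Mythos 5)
Your proposal is correct and follows essentially the same route as the paper: decompose by $A_1$-grading, identify each piece with $T^{s_1}\cS$ and transfer $\delta^{1,1}$ by homological perturbation, pin $L_{\varphi_\pm}$ down to the identity via Lemma~\ref{lem:grading-preserving-endo=id}, and kill all higher-weight $\varphi$-words by the bidegree argument of Lemma~\ref{lem:staircase-no-maps-positive bidegree}, with $U$-equivariance handling $\theta$-multiples. The only cosmetic differences are that the paper identifies the idempotent-$(1,0)$ complex with $\bF[T,T^{-1}]\otimes\cS$ directly via \cite{ZemExact} (rather than via homology and Lemma~\ref{lem:free-resolution-homology}) and deduces that $L_{\varphi_\pm}$ are homotopy inverse to each other from the curvature relation $\mu_0=\varphi_+\varphi_-+\varphi_-\varphi_+$, which is the same fact you invoke informally through the invertibility of $T$ under Koszul duality.
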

\begin{proof} The proof is similar to the proof of Lemma~\ref{lem:cube-point-00}. Write $X_{s_1}$ and $T^{s_1}\otimes \cS$ for the subspaces of ${}^{\cR_1^!}\cX(L)^{\cR_0}$ and ${}^{\cR_1^!}\cX^{\alg}(L)^{\cR_0}$ whose $A_1$-Alexander grading is $s_1$. By using only the components of the structure map $\delta^{1,1}$ which are weighted by $1\in \cR_1^!$, we may view $X_{s_1}$ and $T^{s_1}\otimes \cS$ as type-$D$ modules over $\cR_0$. Write $\cX(L)^{\cR_0}$ for the direct sum of all of the $X_{s_1}$. By the construction from \cite{ZemExact}*{Section~3},  we see that $\cX(L)^{\cR_0}$ is homotopy equivalent to the knot Floer complex of $K_2$, with twisted coefficients induced by the class $[K_1]\in H_1(S^3)$. Since $H_1(S^3)=0$, twisting by the class of $[K_1]$ yields just the tensor product of $\cCFK(K_2)\iso \cS$ with $\bF[T,T^{-1}]$. Therefore, $\cX(L)^{\cR_0}$ is homotopy equivalent to $\bF[T,T^{-1}] \otimes_{\bF} \cS$.  Since each $T^{s_1}\otimes \cS$ lies in a different $A_1$-Alexander grading, we conclude using similar reasoning to the proof of Lemma~\ref{lem:cube-point-00} that there is a strong deformation retraction of type-$D$ structures from $X_{s_1}$ to $T^{s_1}\otimes \cS$. Furthermore, as in the proof of Lemma~\ref{lem:cube-point-00}, the homotopy equivalence can be taken to be $U$-equivariant.

As in the proof of Lemma~\ref{lem:cube-point-00}, using the above constructed strong deformation retraction, we will view $\cX(L)$ and $\cX^{\alg}(L)$ as having the same underlying vector space, which we denote by $\cX(L)$. We write $\delta^{1,1}$ and $\delta^{1,1}_{\alg}$ for the two structure maps.

We now consider the components of the differentials which are weighted by non-unit elements of $\cR_1^!$. Write $L_{\varphi_+}$ and $L_{\varphi_-}$ for the components of $\delta^{1,1}$ which are weighted by $\varphi_+$ and $\varphi_-$, respectively. The structure relation 
\[
(\delta^{1,1})^2+\mu_0\otimes \id\otimes 1
\] (where $\mu_0=\varphi_+\varphi_-+\varphi_-\varphi_+$) implies that $L_{\varphi_+} \circ L_{\varphi_-}\simeq \id$ and $L_{\varphi_-}\circ L_{\varphi_+}\simeq \id$, as type-$D$ endomorphisms of $X_{s_1}$. In particular, $L_{\varphi_+}$ gives a homotopy equivalence from $T^s\otimes \cS$ to  $T^{s+1} \otimes \cS$, and similarly for $L_{\varphi_-}$. Since $L_{\varphi_+}$ and $L_{\varphi_-}$ both preserve the $(\gr_{\ws},\gr_{\zs})$-bigrading on $\cS$, they must be equal to the identity map by Lemma~\ref{lem:grading-preserving-endo=id}.    

Since ${}^{\cR_1^!} \cX(L)^{\cR_0}$ is assumed to be $U$-equivariant, the only other weight one monomials appearing in $\delta^{1,1}$ consist of the component $\theta\otimes \id\otimes U$ which occurs on each generator.

We now claim that there are no components of $\delta^{1,1}$ which are weighted by any monomials $a\in \cR_1^!$ which have weight greater than 1.  If $a\in \cR_1^!$ is a product of $\varphi_+$ and $\varphi_-$ which contains $n$ factors, then write $L_a$ for the component of $\delta^{1,1}$ which is weighted by $a$. We observe that $L_a$ shifts the $(\gr_{\ws},\gr_{\zs})$-grading by $(n-1,n-1)$. (It has the same grading as $\delta_{n+1}^1(T^{\pm 1},T^{\pm 1},\dots, T^{\pm},-)$ on ${}_{\cK} \cX(L)^{\cK}$). We assume by induction that $\delta^{1,1}$ has no weight $j$ terms for $2\le j\le n-1$. If $a$ has weight $n$, then one verifies easily that $L_a$ is a chain map (when viewed as a map from $\cS$ to $\cS$). Lemma~\ref{lem:staircase-no-maps-positive bidegree} implies that $L_a=0$. Proceeding by induction we obtain a $U$-equivariant homotopy equivalence ${}^{\cR_1^!} \cX(L)^{\cR_0}\simeq {}^{\cR_1^!} \cX^{\alg}(L)^{\cR_0}$, completing the proof.
\end{proof}

\subsubsection{Idempotents $(0,1)$ and $(1,1)$}

We state the following result:

\begin{lem}\label{lem:cube-points-01-11} If ${}^{\ve{\cR}_0^!} \cX(L)^{\cR_1}$ is $U$-equivariant then there is a $U$-equivariant homotopy equivalence ${}^{\ve{\cR}_0^!} \cX(L)^{\cR_1}\simeq {}^{\ve{\cR}_0^!} \cX^{\alg}(L)^{\cR_1}$. The same statement holds for ${}^{\cR_1^!}\cX(L)^{\cR_1}$.
\end{lem}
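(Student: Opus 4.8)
The plan is to prove both statements at once, following the template of the proofs of Lemmas~\ref{lem:cube-point-00} and \ref{lem:cube-point-10}. First I would decompose the module by the $A_1$-Alexander grading: write $X_{s_1}$ for the $A_1=s_1$ summand of ${}^{\ve{\cR}_0^!}\cX(L)^{\cR_1}$ (resp.\ of ${}^{\cR_1^!}\cX(L)^{\cR_1}$), regarded as a type-$D$ structure over $\cR_1=\bF[U,T,T^{-1}]$ via the component of $\delta^{1,1}$ weighted by the identity of $\cR_0^!$ (resp.\ of $\cR_1^!$). By Equation~\eqref{eq:C_epsilon=CFL-L-veps} together with the construction from \cite{ZemExact}*{Section~3}, the direct sum $\bigoplus_{s_1}X_{s_1}$ is identified with the surgery complex of the knot $K_1$ (at the relevant idempotent) with coefficients twisted by $[K_2]\in H_1(S^3)$; since $H_1(S^3)=0$ this twisting is trivial, exactly as in the proof of Lemma~\ref{lem:cube-point-10}. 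Because $K_1$ is an L-space knot, the $m_2$-homology of each $X_{s_1}$ is a free rank-one $\cR_1$-module supported in even Maslov gradings --- this is what makes idempotents $(0,1)$ and $(1,1)$ easier than idempotent $(1,0)$, where the internal complex was a full staircase. Hence, by the argument of Lemma~\ref{lem:free-resolution-homology} (or directly, a free rank-one module being its own resolution), there is a $U$-equivariant strong deformation retraction of $X_{s_1}$ onto the candidate internal complex: onto $\ys_{s_1}\otimes\cR_1$ at idempotent $(0,1)$ and onto $T^{s_1}\ve{e}\otimes\cR_1$ at idempotent $(1,1)$, each with vanishing differential.

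Next I would transfer the full type-$DD$ structure along this strong deformation retraction using the homological perturbation lemma; since $H\circ H=0$, the transferred structure maps and the transfer morphisms are $U$-equivariant, so we may assume $\cX(L)$ and $\cX^{\alg}(L)$ have the same underlying space and the same internal $\cR_1$-structure at the idempotent in question. By the $U$-equivariance hypothesis (Definition~\ref{def:U equivariant modules}), the only component of $\delta^{1,1}$ weighted by a monomial divisible by $\theta$ is the self-arrow $\theta\otimes\id\otimes U$, so it remains only to match the components weighted by $\theta$-free monomials of $\cR_0^!$ (resp.\ of $\cR_1^!$). A grading computation, as in the corresponding steps of Lemmas~\ref{lem:cube-point-00} and \ref{lem:cube-point-10}, shows that every such component weighted by a monomial of even weight vanishes identically, and that the surviving components (those weighted by $w$, $wzw$, $wzwzw$, $\dots$ and $z$, $zwz$, $\dots$ at idempotent $(0,1)$; by $\varphi_+$ and $\varphi_-$ at idempotent $(1,1)$) have the $(\gr_{\ws},\gr_{\zs})$-grading of the length-one maps of the candidate.

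It then remains to pin down these surviving components. At idempotent $(1,1)$ the $\varphi_{\pm}$-weighted components are grading-preserving maps between rank-one free modules, hence equal to the identity by the structure (curvature) relation, as in Lemma~\ref{lem:grading-preserving-endo=id}; there are no higher-weight $\theta$-free components, so $\delta^{1,1}$ agrees with $\delta^{1,1}_{\alg}$ on the nose. At idempotent $(0,1)$ the $w$- and $z$-weighted components are forced, up to sign, to be the candidate's $U^{\eta}$-maps by the rigidity of rank-one modules together with the L-space property of $K_1$ (as in the identification of $L_w$ and $L_z$ in Lemma~\ref{lem:cube-point-00}), and the remaining odd-weight components are eliminated inductively: one applies Lemma~\ref{lem:non-standard-perturbation-lemma} to endomorphisms $h$ of the form $\xs\mapsto\epsilon(s_1)(wz)^k\otimes\xs$ --- precisely the $\epsilon$-argument used for the $wzw$- and $zwz$-weighted terms in the proof of Lemma~\ref{lem:cube-point-00} --- combined with the $\Ext$-type vanishing of Lemmas~\ref{lem:ext-computation} and \ref{lem:staircase-no-maps-positive bidegree} applied to the staircase $\cCFK(K_1)$. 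Running the $\cR_1^!$-weight filtration in place of the $\cR_0^!$-weight filtration gives the analogous statement for ${}^{\cR_1^!}\cX(L)^{\cR_1}$. I expect the main obstacle to be the bookkeeping in the first step: checking that the identification of $\bigoplus_{s_1}X_{s_1}$ with the twisted surgery complex of $K_1$ from \cite{ZemExact} is compatible with the $A_1$-Alexander grading and with the $U$-equivariant structure, and that the resulting strong deformation retraction can be chosen $U$-equivariantly; once this is in place, every subsequent step is a transcription of an argument already carried out for idempotents $(0,0)$ and $(1,0)$.
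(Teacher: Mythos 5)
Your proposal is correct and follows exactly the route the paper intends: the paper gives no separate argument for this lemma, saying only that the proofs are not substantially different from those of Lemmas~\ref{lem:cube-point-00} and~\ref{lem:cube-point-10}, and your outline is precisely that adaptation (decomposition by the $A_1$-Alexander grading, strong deformation retraction onto the rank-one pieces coming from the L-space condition, transfer by homological perturbation, then parity-of-grading vanishing and inductive elimination of higher-weight terms via Lemma~\ref{lem:non-standard-perturbation-lemma}). The only small imprecision is invoking Lemmas~\ref{lem:ext-computation} and~\ref{lem:staircase-no-maps-positive bidegree} ``applied to the staircase $\cCFK(K_1)$'': at idempotents $(0,1)$ and $(1,1)$ the reduced pieces are rank-one free type-$D$ modules over $\cR_1$ rather than staircases over $\bF[W,Z]$, so what is actually needed (and suffices) are the trivial rank-one analogues of those vanishing statements over $\cR_1$.
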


The proofs of the above statements are not substantially different than the proofs of Lemma~\ref{lem:cube-point-00} and ~\ref{lem:cube-point-10}, so we leave them to the reader.

\subsection{The length 1 differentials}

Using the isomorphisms of the complexes at each idempotent, proven in Lemmas~\ref{lem:cube-point-00}, \ref{lem:cube-point-10} and \ref{lem:cube-points-01-11}, we know that ${}_{\cK}\cX(L)^{\cK}$ is $U$-equivariantly homotopy equivalent to a complex obtained by summing over the complexes of ${}_{\cK} \cX^{\alg}(L)^{\cK}$ at each idempotent, and then adding some idempotent shifting differentials.

We write $\Phi^{\pm K_1}$, $\Phi^{\pm K_2}$, and $\Phi^{\pm K_1,\pm K_2}$ for the idempotent shifting differentials from $\cX(L)$. Here, $\Phi^{K_1}$ denotes the components of $\delta^{1,1}$ which are weighted by a multiple of $s$ (on the left), but not by a multiple of $\sigma$ or $\tau$ (on the right). Similarly, $\Phi^{-K_1}$ denotes the components of $\delta^{1,1}$ which are weighted by a multiple of $t$ on the left, but not by a multiple of $\sigma$ or $\tau$ on the right. The map $\Phi^{K_2}$ denotes the component of $\delta^{1,1}$ which is weighted by a multiple of $\sigma$ on the right, but not $s$ or $t$ on the left. The map $\Phi^{-K_2}$ is similar. The map $\Phi^{K_1,K_2}$ is the component of $\delta^{1,1}$ which is weighted by a multiple of $s$ on the left and $\sigma$ on the right. The other $\Phi^{\pm K_1,\pm K_2}$ are defined similarly.

We write $\Phi_{\alg}^{\pm K_1}$, $\Phi_{\alg}^{\pm K_2}$ and $\Phi_{\alg}^{\pm K_1, \pm K_2}$ for the analogous idempotent shifting differentials from $\cX^{\alg}(L)$. 

We recall, as in Proposition~\ref{prop:equivalence-over-completed-algebra}, that we are working with a $U$-equivariant model for ${}^{\cK^!}\cX(L)^{\cK}$. This assumption simplifies a number of the arguments, but is only essential for the map from idempotent $(0,0)$ to $(1,1)$.

\subsubsection{Differentials from idempotent $(0,0)$ to $(1,0)$}

\begin{lem}\label{lem:PhiK1-standard} The maps $\Phi^{K_1},\Phi_{\alg}^{K_1}\colon {}^{\ve{\cR}_0^!} \cX(L)^{\cR_0}\to {}^{\cR_1^!} \cX(L)^{\cR_0}$ are $U$-equivariantly chain homotopic. Similarly $\Phi^{-K_1}$ and $\Phi_{\alg}^{-K_1}$ are $U$-equivariantly chain homotopic. 
\end{lem}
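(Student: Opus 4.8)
The plan is to run the same kind of rigidity argument that pins down the complexes at the individual idempotents in Lemmas~\ref{lem:cube-point-00}--\ref{lem:cube-points-01-11}: idempotent combinatorics, the $U$‑equivariance hypothesis, and the $(\gr_{\ws},\gr_{\zs})$‑gradings will cut $\Phi^{K_1}$ down to a couple of components, and then the type‑$DD$ structure relation together with the staircase lemmas will identify these components up to homotopy with the candidate maps $L_s, L_{zs}$ of Section~\ref{sec:vertical map from (0,0) to (1,0)}.

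First I would fix normalizations. By Lemmas~\ref{lem:cube-point-00} and~\ref{lem:cube-point-10} we may assume that the complexes of ${}^{\cK^!}\cX(L)^{\cK}$ at idempotents $(0,0)$ and $(1,0)$ are the candidate ones, with their candidate within‑idempotent differentials ($L_w,L_z,L_{wz},L_{zw}$ and the $\theta|U$ self‑arrows at $(0,0)$; $L_{\varphi_\pm}$ and $\theta|U$ at $(1,0)$); transporting $\Phi^{K_1}$ along these $U$‑equivariant, block‑diagonal equivalences introduces no $t$‑weighted, and no new $\theta$‑weighted, terms. Since $\Phi^{K_1}$ is the $s$‑multiple‑weighted, $\sigma$‑ and $\tau$‑free, $(0,0)\to(1,0)$ part of $\delta^{1,1}$, and the full list of $s$‑multiples in $\ve{I}_0\cdot\cK^!\cdot\ve{I}_1$ is $\{s,s\theta,zs,zs\theta\}$ (using $ws=z^2s=0$ in $\cK^!$), $U$‑equivariance kills the two $\theta$‑multiples and leaves $\Phi^{K_1}=L_s^{\cX}+L_{zs}^{\cX}$, where $L_s^{\cX}=\bigoplus_{s_1}L^{\cX}_{s,s_1}$ is $s$‑weighted with $L^{\cX}_{s,s_1}\colon\cC_{s_1}\to T^{s_1-\ell/2}\cS$ (the $T$‑shift forced by the $A_1$‑grading), and $L_{zs}^{\cX}$ is $zs$‑weighted; both carry the same $(\gr_{\ws},\gr_{\zs})$‑gradings as $L_s,L_{zs}$. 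Reading off the $s$‑ and $zs$‑weighted coefficients of the type‑$DD$ relation ($\mu_1$ on $\cK^!$ contributes nothing, being supported in idempotent $(0,0)$ and killing all $s$‑multiples, and there is no contribution from composing two copies of $\delta^{1,1}$ in the $s$‑slot) then shows that $L^{\cX}_{s,s_1}$ is a type‑$D$ chain map $\cC_{s_1}\to T^{s_1-\ell/2}\cS$, necessarily preserving the algebraic grading, and that $\d_{\Int}(L_{zs}^{\cX})=L_s^{\cX}\circ L_z+L_{\varphi_+}\circ L_s^{\cX}$; the candidate $\delta^{1,1}_{\alg}$ satisfies the same two identities with $L_s,L_{zs}$.

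The hard part is identifying $L^{\cX}_{s,s_1}$ with $L_s$ up to homotopy. Both are chain maps of the same grading between the staircases $\cC_{s_1}$ (a staircase by Lemma~\ref{lem:free-resolution-homology}, since $L$ is an L‑space link) and $T^{s_1-\ell/2}\cS=\cCFK(K_2)$, so by Lemma~\ref{lem:basic-staircase-lemma} it suffices to see that they induce the same map on homology; the step that is not formal is that $L^{\cX}_{s,s_1}$ — which under Koszul duality is the flip map $\Phi^{+K_1}$ of the link surgery complex — is non‑zero on homology for \emph{every} $s_1$. This is exactly \cite{CZZSatellites}*{Section~5.4}: under the standard identification of the $K_1$‑direction of $\cC_\Lambda(L)$ with the Ozsv\'{a}th--Szab\'{o} knot surgery formula for $K_1$ with coefficients in $\cCFL(K_2)$, $\Phi^{+K_1}$ induces on $\bigoplus_{s_2}\cHFL(L,s_1,s_2)$ the vertical map $v_{s_1}$, which is ``multiplication by a power of $U$'' and so never vanishes. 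Since $\cHFK(K_2)$ has rank $\le 1$ in each $(\gr_{\ws},\gr_{\zs})$‑grading, the non‑zero homogeneous $\bF[W,Z]$‑linear map in the grading of $L_s$ is unique (the rigidity that also makes $L_w$ unique up to homotopy), so $L^{\cX}_{s,s_1}$ and $L_s$ agree on homology, hence are chain homotopic; fixing a homogeneous null‑homotopy $h_s$ of $L^{\cX}_s+L_s$ and absorbing the corresponding ($s$‑weighted, nilpotent, $U$‑equivariant) automorphism via Lemma~\ref{lem:non-standard-perturbation-lemma}, we may assume $L^{\cX}_s=L_s$.

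It then remains to pin down the $zs$‑component, and this is again staircase rigidity. With $L^{\cX}_s=L_s$, the maps $L_{zs}^{\cX}$ and $L_{zs}$ satisfy the same equation $\d_{\Int}(\,\cdot\,)=L_s\circ L_z+L_{\varphi_+}\circ L_s$, so the argument is the one used for $L_{wz},L_{zw}$ in Lemma~\ref{lem:cube-point-00}: the part of $L_{zs}^{\cX}$ that does not raise the algebraic grading is automatically a type‑$D$ chain map $\cC_{s_1}\to T^{s_1+1-\ell/2}\cS$ (the staircase differentials being injective), of $\Ext^1$‑type bidegree, hence null‑homotopic by Lemma~\ref{lem:generalized-Ext-computation} (with reference map $L_s\circ L_z$, non‑zero on homology by the $zs$‑relation above), and removing its null‑homotopy by a $zs$‑weighted application of Lemma~\ref{lem:non-standard-perturbation-lemma} — which terminates at once, $z^2=0$ leaving no higher weights — we may assume $L_{zs}^{\cX}$ raises the algebraic grading by $1$; then $L_{zs}^{\cX}+L_{zs}$ is a chain map $\cC_{s_1}\to T^{s_1+1-\ell/2}\cS$ raising the algebraic grading by $1$, which vanishes since on algebraic grading $0$ it lands in $\ker\d_{\cS}=0$, so $L_{zs}^{\cX}=L_{zs}$. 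Assembling $h_s$ and the two perturbations — all $s$‑ or $zs$‑weighted, hence $U$‑equivariant — produces the desired $U$‑equivariant homotopy $\Phi^{K_1}\simeq\Phi^{K_1}_{\alg}$, and the statement for $\Phi^{-K_1}$ follows by the identical argument with $(s,zs,\varphi_+)$ replaced by $(t,wt,\varphi_-)$ (or from the $\Phi^{K_1}$ case by the symmetry reversing the orientation of $K_1$). The single genuinely non‑formal ingredient is the non‑vanishing of $\Phi^{+K_1}$ on homology used in the third paragraph; everything else is rigidity of staircase complexes.
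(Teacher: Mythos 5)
Your treatment of the $s$-weighted component tracks the paper: decompose $\Phi^{K_1}$ using the basis $\{s,s\theta,zs,zs\theta\}$ of $\ve{I}_0\cdot\cK^!\cdot\ve{I}_1$, use $U$-equivariance to discard the $\theta$-multiples, check via the $DD$ relation that the $s$-component is a chain map $\cC_{s_1}\to T^{s_1-\ell/2}\cS$, show it is non-zero on homology for every $s_1$ (the paper cites \cite{MOIntegerSurgery}*{Lemma~10.1} where you cite the vertical maps of \cite{CZZSatellites}*{Section~5.4}; either is acceptable), and invoke Lemma~\ref{lem:basic-staircase-lemma} to absorb the difference by an $s$-weighted homotopy. (Your detour through Lemma~\ref{lem:non-standard-perturbation-lemma} here is unnecessary --- the statement concerns homotopy of morphisms, so one simply replaces $\Phi^{K_1}$ by $\Phi^{K_1}+\d(h)$ --- but it is harmless.)

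The genuine gap is in your treatment of the $zs$-weighted component, specifically the claim that the algebraic-grading-lowering part $L_{zs}^{01}\colon \cC_{s_1}\to T^{s_1+1-\ell/2}\cS$ is null-homotopic for each $s_1$ by Lemma~\ref{lem:generalized-Ext-computation}. That lemma concerns chain maps of $(\gr_{\ws},\gr_{\zs})$-bidegree $(-1,-1)$ relative to a grading-preserving map which is non-zero on homology; here, since $\d_{\Int}$ shifts the bidegree by $(-1,-1)$ and $\d_{\Int}(L_{zs})=L_s\circ L_z+L_{\varphi_+}\circ L_s$, the map $L_{zs}$ (hence each component $L^{01}_{zs;s_1}$) sits in bidegree $(+1,+1)$ relative to the reference $L_s\circ L_z$, so the lemma does not apply, and the relevant $\Ext^1$-type group between two \emph{different} staircases need not vanish in positive bidegree (Lemma~\ref{lem:staircase-no-maps-positive bidegree} is only about endomorphisms of a single staircase). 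This is exactly why the paper does not argue $s_1$-by-$s_1$: it shows $L^{01}_{zs;s_1}=0$ only for $s_1\gg 0$, where $L_s\circ L_z$ identifies $\cC_{s_1}$ with $\cS$ and Lemma~\ref{lem:staircase-no-maps-positive bidegree} kills the induced endomorphism, and then eliminates the whole family at once with a global \emph{$s$-weighted} ``telescoping'' homotopy $j$, given on $\cC_{s_1}$ by $\sum_{t_1\ge s_1} L_{zs;t_1}^{01}\circ (L_z)^{t_1-s_1}$ into the same copy of $\cS$ as $L_s$; the algebra relations $z\cdot s=zs$ and $s\cdot\varphi_+=zs$ make $\d(j)$ telescope to precisely the $zs$-weighted error, with the $\theta$-terms cancelling by $U$-equivariance. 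Your proposed per-$s_1$, $zs$-weighted null-homotopy does not exist in general, so this step needs to be replaced by an argument of this global type. (Your final observation --- that once the grading-lowering part is gone, the grading-raising part is forced by injectivity of the staircase differential --- is fine and is equivalent to the paper's choice of $L^{10}_{zs}$ as the candidate's $zs$-map, as is your reduction of the $\Phi^{-K_1}$ case by symmetry.)
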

\begin{rem} In the statement, we are viewing $\ve{\cR}_0^!$ and $\cR_1^!$ as being subalgebras of $\ve{\cK}^!$, and therefore we can view both ${}^{\ve{\cR}_0^!} \cX(L)^{\cR_0}$ and ${}^{\cR_1^!} \cX(L)^{\cR_0}$ as type-$DD$ modules over $\ve{\cK}^!$ and $\cR_0$. The claim is that $\Phi^{K_1}$ and $\Phi^{K_1}_{\alg}$ are chain homotopic as morphisms of type-$DD$ modules, when viewed in this way.
\end{rem} 
\begin{proof}
Write $L_s$ and $L_{t}$ for the components of $\Phi^{K_1}$ and $\Phi^{-K_1}$ which are weighted by $s$ and $t$, respectively. Write $L_s^{\alg}$ and $L_{t}^{\alg}$ for the corresponding components of $\Phi_{\alg}^{K_1}$ and $\Phi_{\alg}^{-K_1}$. We can view these as type-$D$ morphisms from $\cC_{s_1}$ to $\cS$. From the structure relations for $\delta^{1,1}$ and $\delta^{1,1}_{\alg}$, it is straightforward to check that $L_s$, $L_t$, $L_{s}^{\alg}$ and $L_t^{\alg}$ are chain maps.


The maps $L_{s}$ and $L^{\alg}_s$ are chain maps from $\cC_{s_1}$ to $\cS$ which preserve the $(\gr_{\ws},\gr_{\zs})$ grading modulo 2. Since $\Phi^{K_1}$ induces a quasi-isomorphism $\cC_{s_1}\to \cS$ for ${s_1}\gg 0$ (compare \cite{MOIntegerSurgery}*{Lemma~10.1}), it cannot be zero on homology for any $s_1$. By assumption, $\Phi_{\alg}^{K_1}$ has this property, and has the same grading as $\Phi^{K_1}$. By Lemma~\ref{lem:basic-staircase-lemma}, we conclude that $L_s\simeq L_{s}^{\alg}$. Let $h_{s_1}$ be a choice of homotopy. We define a morphism of $DD$-bimodules $h\colon {}^{\ve{\cR}_0^!} \cX(L)^{\cR_0}\to {}^{\cR_1^!} \cX(L)^{\cR_0}$ by sending $\xs\in \cC_{s_1}$ to $s\otimes h_{s_1}(\xs)$. By replacing $\Phi^{K_1}$ with $\Phi^{K_1}+\d(h)$ is a chain map with the same properties, we may assume that $L_s=L_{s}^{\alg}$. (Here, we are writing $\d(h)$ for the type-$DD$ morphism differential applied to $h$). 

We now consider the component of $\Phi^{K_1}$ which is weighted by $zs$. We write $L_{zs}$ for this map. We can decompose this as a sum $L_{zs}^{01}+L_{zs}^{10}$, where $L_{zs}^{ij}$ maps algebraic grading $j$ to algebraic grading $i$.  In our definition of $\cX^{\alg}(L)$, we may take the $zs$ component of the differential to be $L_{zs}^{10}$.

We now show how to eliminate the $L_{zs}^{01}$ term of $\Phi^{K_1}$ via a chain homotopy.

For a fixed value $s_1\in \lk(K_1,K_2)/2+\Z$, we will write $L_{zs;s_1}^{01}\colon \cC_{s_1}\to \cS$ for the above map.
 Firstly, for all large $s_1$, we know that $L_s\colon \cC_{s_1}\to \cS$ is an isomorphism, and also that $L_z\colon \cC_{s_1}\to \cC_{s_1+1}$ is an isomorphism. We have
\[
(\gr_{\ws},\gr_{\zs})(L_{zs;s_1})=(\gr_{\ws},\gr_{\zs})(L_s)+(\gr_{\ws},\gr_{\zs})(L_z)+(1,1).
\]
Identifying $\cC_{s_1}$ and $\cS$ via $L_s\circ L_z$, we may view $L_{zs;s_1}^{01}$ as an endomorphism of $\cS$ which increases the $(\gr_{\ws},\gr_{\zs})$ grading by $(1,1)$. It is not hard to see that $L_{zs;s_1}^{01}$ is a chain map, so Lemma~\ref{lem:staircase-no-maps-positive bidegree} implies that $L_{zs;s_1}^{01}=0$ for $s_1\gg 0$.

In order to eliminate $L_{zs;s_1}^{01}$ for other values of $s_1$, we now define a map $j\colon {}^{\cR_0^!} \cX(L)^{\cR_0}\to {}^{\cR_1^!} \cX(L)^{\cR_0}$. On $\cC_{s_1}$, we set $j$ to be
\[
\sum_{t_1\ge s_1} zs\otimes T^{s_1-\ell/2}\otimes L_{zs,t_1}^{01}\circ (L_z)^{t_1-s_1}.
\]
In detail, this maps $\cC_{s_1}$ to $T^{s_1-\ell/2} \cS$. The sum is finite because we have already arranged for $L_{zs,t_1}^{01}$ to be zero if $t_1\gg 0$. Also, $j$ maps $\cC_{s_1}$ to the same copy of $\cS$ as the map $L_s$. It is straightforward to see that
\[
\d(j)=L_{zs}^{01}.
\]

Adding this homotopy to $\Phi^{K_1}$, we may therefore assume that the $s$ and $zs$ coefficients of $\Phi^{K_1}$ coincide with those of $\Phi_{\alg}^{K_1}$.  Since we assume that ${}^{\cK^!} \cX(L)^{\cK}$ is a $U$-equivariant model, there cannot be any $\theta s$ or $\theta s \varphi_+$ weighted summands, so we conclude that our original $\Phi^{K_1}$ is chain homotopic to $\Phi_{\alg}^{K_1}$.

 The proof for $\Phi^{-K_1}$ is identical, so the proof is complete.
\end{proof}

\subsubsection{Differentials from idempotent $(0,0)$ to $(0,1)$}

\begin{lem}\label{lem:PhiK2-standard} The maps $\Phi^{K_2},\Phi_{\alg}^{K_2}\colon {}^{\bm{\cR}_0^!} \cX(L)^{\cR_0}\to {}^{\bm{\cR}_0^!} \cX(L)^{\cR_1}$ are $U$-equivariantly chain homotopic. Similarly $\Phi^{-K_2}$ and $\Phi_{\alg}^{-K_2}$ are $U$-equivariantly chain homotopic. 
\end{lem}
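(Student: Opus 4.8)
The plan is to adapt the proof of Lemma~\ref{lem:PhiK1-standard}, with the $\sigma$- and $\tau$-weighted terms of $\delta^{1,1}$ now playing the role of the $s$- and $t$-weighted ones. Since both ${}^{\bm{\cR}_0^!}\cX(L)^{\cR_0}$ and ${}^{\bm{\cR}_0^!}\cX(L)^{\cR_1}$ carry left idempotent $0$, the map $\Phi^{K_2}$ is a morphism of type-$DD$ modules over $(\bm{\cR}_0^!,\cK)$, with components weighted on the left by monomials of $\cR_0^!$ and on the right by multiples of $\sigma$. First I would isolate the part $R^{\sigma}$ of $\Phi^{K_2}$ weighted by $1\in\cR_0^!$ on the left; by the $A_1$-grading this decomposes into $\sigma$-weighted type-$D$ morphisms $\cC_{s_1}\to\ys_{s_1-\ell/2}$, and reading off the corresponding coefficient of the type-$DD$ structure relation shows each of these is a chain map. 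Recall that, after Lemmas~\ref{lem:cube-point-00} and~\ref{lem:cube-points-01-11}, the complex $\cC_{s_1}$ is the staircase resolution of $\cH_{s_1}:=\bigoplus_{s_2}\cHFL(L,s_1,s_2)$, while $\ys_{s_1-\ell/2}$ is a rank-one free type-$D$ module over $\cR_1$ with vanishing internal differential.

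The crux is a uniqueness statement for $\sigma$-weighted chain maps, generalizing Lemma~\ref{lem:basic-staircase-lemma}: via the algebra map $\phi^{\sigma}$ of~\eqref{eq:phi-sig-tau}, a $\sigma$-weighted morphism $\cC_{s_1}\to\ys_{s_1-\ell/2}$ is the same datum as an $\bF[W,Z]$-linear chain map from $\cC_{s_1}$ into $\ys_{s_1-\ell/2}$, the latter regarded as a differential-free $\bF[W,Z]$-module via $\phi^{\sigma}$; since $\cC_{s_1}$ is a free resolution of $\cH_{s_1}$ over $\bF[W,Z]$, such a map is determined up to chain homotopy by the induced $\bF[W,Z]$-linear map on homology (the universal property of projective resolutions, \cite{Weibel}*{Theorem~2.2.6}). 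It therefore suffices to check that $\Phi^{K_2}$ and $\Phi^{K_2}_{\alg}$ induce the same map $\cH_{s_1}\to H_*(\ys_{s_1-\ell/2})$ for every $s_1$. Both have the bigrading prescribed in Section~\ref{sec:horizontal map from (0,0) to (0,1)}, and, as in Lemma~\ref{lem:PhiK1-standard}, the structure relation together with a destabilization input for the $\sigma$-component of the Manolescu--Ozsv\'{a}th surgery formula (compare \cite{MOIntegerSurgery}*{Lemma~10.1}) and with compatibility with the $L_z$-action on the source and multiplication by $T$ on the target forces $H_*(R^{\sigma})$ to be non-vanishing on each summand $\cHFL(L,s_1,s_2)$; since these summands are cyclic $\bF[U]$-modules and the bigrading is fixed, this pins the induced map down, and $R^{\sigma}_{\alg}$ is constructed to realize it. Hence $R^{\sigma}\simeq R^{\sigma}_{\alg}$; promoting the homotopy to a morphism $h$ of $DD$-bimodules and replacing $\Phi^{K_2}$ by $\Phi^{K_2}+\d(h)$, we may assume $R^{\sigma}=R^{\sigma}_{\alg}$.

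It remains to eliminate the components of $\Phi^{K_2}$ weighted on the left by monomials of $\cR_0^!$ of weight $\ge 1$; the $\theta$-weighted ones are excluded by the $U$-equivariance hypothesis. For the remaining ones I would induct on the weight of the left factor, exactly as in Lemma~\ref{lem:cube-point-00} and in the handling of the $zs$-term in Lemma~\ref{lem:PhiK1-standard}: once the components of weight $<n$ have been matched with those of $\Phi^{K_2}_{\alg}$ (so that the weight $\ge 1$ ones vanish), the structure relation forces each weight-$n$ component to be a chain map $\cC_{s_1}\to\ys_{s_1'}$, and the uniqueness statement above reduces its vanishing to the vanishing of a graded $\bF[W,Z]$-module map $\cH_{s_1}\to\ys_{s_1'}$ of a definite nonzero bidegree --- a grading computation analogous to Lemmas~\ref{lem:ext-computation} and~\ref{lem:staircase-no-maps-positive bidegree}. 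Where such a vanishing is available only for $s_1\gg 0$, I would spread it to all $s_1$ by the telescoping homotopy $j=\sum_{t_1\ge s_1}(\cdots)\circ(L_z)^{t_1-s_1}$ used in Lemma~\ref{lem:PhiK1-standard}. Applying Lemma~\ref{lem:non-standard-perturbation-lemma} at each stage and taking the resulting composition (which converges over the completed algebra $\bm{\cR}_0^!$) gives a $U$-equivariant homotopy $\Phi^{K_2}\simeq\Phi^{K_2}_{\alg}$; the identical argument with $\tau$, $\phi^{\tau}$, $R^{\tau}$ in place of $\sigma$, $\phi^{\sigma}$, $R^{\sigma}$ handles $\Phi^{-K_2}$. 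I expect the main obstacle to be the homological identification in the second paragraph --- computing the map induced by the $\sigma$-component of the surgery formula and matching it with the prescribed behavior of $R^{\sigma}_{\alg}$ --- after which the weight induction is routine.
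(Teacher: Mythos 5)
Your plan is a genuinely different organization from the paper's proof. The paper does not compare the two morphisms component-by-component on the source at all: it observes that, after rewriting $\Phi^{K_2}$ as a map out of ${}^{\ve{\cR}_0^!}\cX(L)^{\cR_0}\boxtimes{}_{\cR_0}[\phi^\sigma]^{\cR_1}$, this map is a homotopy equivalence \emph{by construction of the link surgery formula}, so $\Phi^{K_2}$ and $\Phi^{K_2}_{\alg}$ differ by post-composition with a grading-preserving, $U$-equivariant homotopy automorphism $F$ of the target ${}^{\cR_0^!}\cX^{\alg}(L)^{\cR_1}$; the whole content of the proof is then the classification of such automorphisms ($F_1=\id$ since each Alexander grading of the target has a single generator, $F_w=F_z=0$ by parity, $F_{wz},F_{zw}$ and the higher even-length terms removed by an explicitly constructed homotopy depending on where the $U$'s sit in the target's $w|$- and $z|$-arrows, odd-length terms excluded by parity and $\theta$-multiples by $U$-equivariance). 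Your route instead works on the source, using that $\cC_{s_1}$ is a free resolution of $\cH_{s_1}$ and that a $\sigma$-weighted morphism is the same as an $\bF[W,Z]$-linear map into the $\phi^\sigma$-pullback of the target; this is closer in spirit to the paper's proof of Lemma~\ref{lem:PhiK1-standard} and could in principle be made to work.

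However, as written there are concrete gaps precisely at the step you flag as the main obstacle. First, your identification of the induced maps on homology is not justified: the non-vanishing of $H_*(R^\sigma)$ is not a consequence of a Manolescu--Ozsv\'{a}th ``large surgery'' destabilization statement (that is the input for the $K_1$-direction maps in Lemma~\ref{lem:PhiK1-standard}); the correct input for the $K_2$-direction is exactly the fact the paper uses, namely that the $\sigma$-labelled map of the surgery formula becomes a homotopy equivalence after the base change along $\phi^\sigma$, and your propagation argument (``compatibility with the $L_z$-action on the source and multiplication by $T$ on the target'') conflates the $s_1$- and $s_2$-directions: non-vanishing should be spread across the $\bF[U]$-towers of a fixed $\cC_{s_1}$ using the right $\cR_0$-action intertwined via $\phi^\sigma$ and torsion-freeness of $\bF[U,T,T^{-1}]$, not via $L_z\colon \cC_{s_1}\to\cC_{s_1+1}$. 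Second, the uniqueness-up-to-homotopy step is not literally \cite{Weibel}*{Theorem~2.2.6}: the actual $R^\sigma$ may have components out of algebraic grading $1$ of $\cC_{s_1}$, and removing these (and likewise the even left-weight components $wz|\cdot$, $zw|\cdot$ in your induction) requires an $\Ext^1$-type vanishing for the target $\ys_{s_1'}\otimes\cR_1$ pulled back along $\phi^\sigma$, which is not covered by Lemmas~\ref{lem:ext-computation} or~\ref{lem:staircase-no-maps-positive bidegree} (the target is not a staircase). That vanishing is in fact true --- $\phi^\sigma(Z)=T$ is a unit, so the relevant $\Ext$ is computed after inverting $Z$, where $\cH_{s_1}$ becomes free --- but you neither state nor prove it, and without it the inductive elimination of higher-weight terms does not go through. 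Filling these two points would make your argument work; alternatively, the paper's reduction to automorphisms of the (very simple) target bimodule avoids both issues.
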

\begin{proof}We focus on constructing a chain homotopy $\Phi^{K_2}\simeq \Phi_{\alg}^{K_2}$.

We can view the map $\Phi^{K_2}$ as being induced by a map
\[
\Phi^{K_2}\colon {}^{\ve{\cR}_0^!} \cX(L)^{\cR_0}\boxtimes {}_{\cR_0}[\phi^{\sigma}]^{\cR_1}\to {}^{\ve{\cR}_0^!} \cX(L)^{\cR_1}
\]
where $\phi^\sigma\colon \cR_0\to \cR_1$ is the natural algebra morphism described in Equation~\eqref{eq:phi-sig-tau},   and ${}_{\cR_0} [\phi^\sigma]^{\cR_1}$ denotes the induced $DA$-bimodule. The correspondence between $\phi^{K_2}$ and $\Phi^{K_2}$ is as follows. If there is a generator $\xs$ and $\Phi^{K_2}(\xs)$ has a summand $a\otimes \ys\otimes U^i T^j \sigma$, then we view $\phi^{K_2}(\xs)$ as having a component $a\otimes \ys\otimes U^i T^j$.

By construction of the link surgery formula, the map $\phi^{K_2}$ is a homotopy equivalence. Therefore, $\Phi^{K_2}$ and $\Phi_{\alg}^{K_2}$ must differ by post-composition with a grading preserving homotopy automorphism of ${}^{\ve{\cR}_0^!} \cX(L)^{\cR_1}$. It suffices therefore to show that all grading preserving, $U$-equivariant homotopy automorphisms of ${}^{\cR_0^!} \cX_{\alg}(L)^{\cR_1}$ are chain homotopic to the identity. We suppose that $F$ is such a homotopy automorphism. If $a\in \cR_0^!$, we write $F_a$ for the component of $F$ which is weighted by $a$.

In each Alexander grading $s_1\in \Z$, ${}^{\cR_0^!} \cX_{\alg}(L)^{\cR_1}$ contains a single generator, which we will denote $\ys_{s_1}$.

If $F$ is a grading preserving homotopy automorphism, then $F_1$ must be an isomorphism, since there are no $\cR_0^!$-weight 0 components of $\delta^{1,1}$ pointing to or from $\ys_{s_1}$. Consequently, $F_1$ sends $\ys_{s_1}$ to $\ys_{s_1}\otimes 1$ for all $s_1$.

Next, we observe that $F_{z}$ and $F_{w}$ shift the $\gr_{\ws}$-grading by $\pm 1$ and hence must vanish since ${}^{\cR_0^!} \cX(L)^{\cR_1}$ is supported in only even $\gr_{\ws}$-gradings.

We now consider $F_{wz}$ and $F_{zw}$.  By considering the $wzw$ coefficient of the relation  $\d(\Phi^K)=0$, we see that 
\[
F_{wz}\circ L_z+L_z\circ F_{zw}=\d(F_{zwz}).
\]
In particular, the map $F_{zw}$ is non-zero on $\ys_{s_1}$ if and only if $F_{wz}$ is non-zero on $\ys_{s_1+1}$.

 Note that since $L_z\circ L_w=\id\otimes U$, one of $L_z(\ys_{s_1})$ and $L_w(\ys_{s_1+1})$ must be  weighted by $1$ and the other by $U$.  For each $s_1\in \Z$, we will define $E(s_1)\subset \{-1,1\}$, as follows.
 \begin{enumerate}
 \item We declare $-1\in E(s_1)$ if and only if $F_{wz}(\ys_{s_1})$ is non-zero and $L_Z(\ys_{s_1-1})$ is weighted by 1.
 \item We declare $1\in E(s_1)$ if and only if $F_{zw}(\ys_{s_1})$ is non-zero and $L_{W}(\ys_{s_1+1})$ is weighted by $1$. 
\end{enumerate}

Note that since $F$ preserves the $(\gr_{\ws},\gr_{\zs})$-bigrading as well as the Alexander bigrading, the maps $F_{wz}$ and $F_{zw}$ can only have components whose $\cR_1$ factor is $1\in \cR_1$.

We now define a homotopy of $j$ as follows. If $-1\in E(s_1)$, then we declare $j(\ys_{s_1})$ to have a summand of $w\otimes \ys_{s_1-1}\otimes 1$. If $1\in E(s_1)$, then we declare $j(\ys_{s_1})$ to have a summand of $z\otimes \ys_{s_1+1}\otimes 1$. (If $E(s_1)=\{-1,1\}$, we declare $j(\ys_{s})$ to be a sum of the above two elements).

One checks easily that 
\[
\d(j)=wz\otimes F_{wz}+zw\otimes F_{zw}.
\]
Therefore we may assume, after chain homotopy, that $F_{wz}$ and $F_{zw}$ are zero. We illustrate in Figure~\ref{fig:eliminate-wz-zw-j} the situation that $E(s_1)=\{1,-1\}$.

From here, we proceed by induction to remove the summands of $F$ which are weighted by monomials of higher length. By the same grading argument as above, there can never be any summands weighted by monomials of the form $wz\cdots w$ or $zw\cdots z$ (i.e. monomials of odd length). The argument used above to eliminate summands weighted by $wz$ or $zw$ extends without substantial change to eliminate summands weighted by monomials of even length greater than 2 (i.e. $wz\cdots z$ and $zw\cdots w$). Since $F$ is assumed to be $U$-equivariant, there are no components weighted by a multiple of $\theta$. We have thus shown that $F$ is chain homotopic to the identity, so the proof is complete.
\end{proof}

\begin{figure}[h]
\[
\begin{tikzcd}[row sep=2cm, column sep=2.5cm, labels=description]
\cdots\ys_{s_1-1} 
	\ar[r,bend left, "z|1"]
&
\ys_{s_1} 
	\ar[r,bend left, "z|U"]
	\ar[l, bend left, "w|U"]
	\ar[d, "\substack{wz|1\\zw|1}"]
	\ar[dl, dashed, "w|1"]
	\ar[dr, dashed, "z|1"]
&
\ys_{s_1+1}\cdots
	\ar[l, bend left, "w|1"]
\\
\cdots\ys_{s_1-1}
	\ar[r,bend left, "z|1"]
&
\ys_{s_1}
	\ar[r,bend left, "z|U"]
	\ar[l, bend left, "w|U"]
&
\ys_{s_1+1}\cdots
	\ar[l, bend left, "w|1"]
\end{tikzcd}
\]
\caption{The homotopy $j$ (dashed arrows) which eliminates $wz$ and $zw$ weighted arrows of $F$ (solid arrows) when $E(s_1)=\{-1,1\}$.}
\label{fig:eliminate-wz-zw-j}
\end{figure}
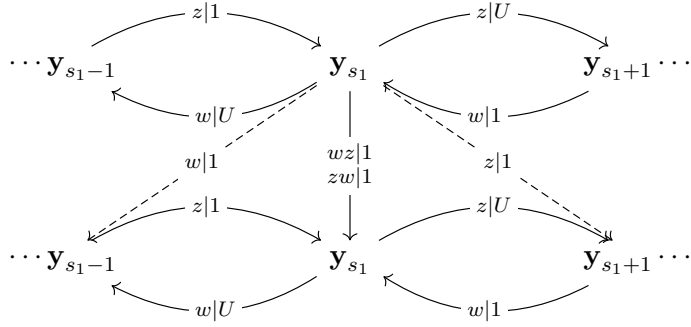


\subsubsection{Differentials from idempotent $(1,0)$ to $(1,1)$ and from $(0,1)$ to $(1,1)$}

We now state the following lemmas:

\begin{lem}\label{lem:PhiK-Phi-alg-K-extra-cases}\,
\begin{enumerate}
\item The maps $\Phi^{K_2},\Phi_{\alg}^{K_2}\colon {}^{\cR_1^!} \cX(L)^{\cR_0}\to {}^{\cR_1^!} \cX(L)^{\cR_1}$ are $U$-equivariantly chain homotopic. Similarly $\Phi^{-K_2}$ and $\Phi_{\alg}^{-K_2}$ are $U$-equivariantly chain homotopic. 
\item The maps $\Phi^{K_1},\Phi_{\alg}^{K_1}\colon {}^{\bm{\cR}_0^!} \cX(L)^{\cR_1}\to {}^{\cR_1^!} \cX(L)^{\cR_1}$ are $U$-equivariantly chain homotopic. Similarly $\Phi^{-K_1}$ and $\Phi_{\alg}^{-K_1}$ are $U$-equivariantly chain homotopic. 
\end{enumerate}
\end{lem}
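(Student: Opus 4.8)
The plan is to prove both parts by adapting, almost verbatim, the proofs of Lemmas~\ref{lem:PhiK1-standard} and \ref{lem:PhiK2-standard}: part~(2), concerning the left-idempotent-shifting maps $\Phi^{\pm K_1}$, runs parallel to Lemma~\ref{lem:PhiK1-standard}, while part~(1), concerning the right-idempotent-shifting maps $\Phi^{\pm K_2}$, runs parallel to Lemma~\ref{lem:PhiK2-standard}. In each case I would first invoke Proposition~\ref{prop:equivalence-over-completed-algebra} to arrange that $\cX(L)$ and $\cX^{\alg}(L)$ have the same underlying vector space, and the same induced type-$D$ structures, at the two relevant idempotents, so that the problem reduces to comparing two structure maps $\delta^{1,1}$ and $\delta^{1,1}_{\alg}$ supported between fixed complexes. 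Here the complexes at idempotents $(1,0)$, $(0,1)$ and $(1,1)$ are either staircases (the $T^{s_1}\cS$) or are rank one in each Alexander grading (the $\ys_{s_1}$, the $T^{s_1}\ve{e}$), so all of the relevant homological algebra is that of the staircase lemmas (Lemmas~\ref{lem:basic-staircase-lemma}, \ref{lem:ext-computation}, \ref{lem:staircase-no-maps-positive bidegree}).

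For part~(1) I would, as in Lemma~\ref{lem:PhiK2-standard}, view $\Phi^{K_2}\colon {}^{\cR_1^!}\cX(L)^{\cR_0}\to{}^{\cR_1^!}\cX(L)^{\cR_1}$ as induced by a chain map obtained after tensoring the target with the $DA$-bimodule ${}_{\cR_0}[\phi^{\sigma}]^{\cR_1}$, where $\phi^\sigma$ is the algebra morphism of~\eqref{eq:phi-sig-tau}. This chain map is the comparison map realizing surgery on $K_2$ inside the stratum where surgery on $K_1$ has already been performed, so by construction of the link surgery formula it is a homotopy equivalence; the same holds for $\Phi^{K_2}_{\alg}$. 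Hence $\Phi^{K_2}$ and $\Phi^{K_2}_{\alg}$ differ by post-composition with a grading-preserving, $U$-equivariant homotopy automorphism $F$ of ${}^{\cR_1^!}\cX^{\alg}(L)^{\cR_1}$, and it suffices to show any such $F$ is chain homotopic to the identity. This follows by the same weight-by-weight induction as at the end of Lemma~\ref{lem:PhiK2-standard}: the weight-$0$ part of $F$ is forced to be the identity, since there are no weight-$0$ arrows between distinct generators $T^{s_1}\ve{e}$; the components of $F$ weighted by longer words are removed one length at a time using grading constraints together with explicit null-homotopies; and the $\theta$-weighted components vanish because $F$ is $U$-equivariant. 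The argument for $\Phi^{-K_2}$ is identical.

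For part~(2) I would follow Lemma~\ref{lem:PhiK1-standard}: isolate the components $L_s$, $L_t$ of $\Phi^{K_1}$, $\Phi^{-K_1}$ weighted by $s$ and $t$, whose candidate counterparts $L_s^{\alg}$, $L_t^{\alg}$ are $L_\sigma$ and $T^{\ell}L_\tau$ from Section~\ref{sec:vertical map from (0,1) to (1,1)}. The structure relations show these are all chain maps, and $L_s$ is non-zero on homology because $\Phi^{K_1}$ is a quasi-isomorphism in all sufficiently positive $A_1$-Alexander gradings (the analogue of \cite{MOIntegerSurgery}*{Lemma~10.1} for surgery on $K_1$ inside the stratum where surgery on $K_2$ is already performed). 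Since the source and target are rank one in each $A_1$-grading, the reasoning of Lemma~\ref{lem:basic-staircase-lemma} shows $L_s$ is determined up to chain homotopy by its Maslov gradings and its homology action, so after adding $\d(h)$ for a suitable type-$DD$ morphism $h$ we may assume $L_s=L_s^{\alg}$, and likewise $L_t=L_t^{\alg}$. The remaining components of $\Phi^{K_1}$, weighted by longer words ($zs$, $wt$, $\theta s$, and so on), are then killed by induction on weight: the $\theta$-weighted ones by $U$-equivariance, and the rest by the same grading and vanishing arguments as in Lemma~\ref{lem:PhiK1-standard}, using Lemmas~\ref{lem:ext-computation} and \ref{lem:staircase-no-maps-positive bidegree} to produce the needed null-homotopies and Lemma~\ref{lem:non-standard-perturbation-lemma} to absorb them. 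The $t$-weighted half gives $\Phi^{-K_1}\simeq\Phi^{-K_1}_{\alg}$ in the same way.

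I expect the main obstacle to be grading bookkeeping rather than any new idea: one must track the $\lk(K_1,K_2)/2$ shifts in the $A_1$-Alexander grading carried by $s$, $t$, $\sigma$ and $\tau$, so that the leading components land in the correct target summands and the comparison maps $\phi^{\pm K_2}$ are correctly identified, and one must pin down the precise stratum-restricted non-vanishing statement coming from the surgery formula in each case. Once those inputs are fixed, every remaining step is a transcription of an argument already carried out earlier in the paper, which is why this lemma can reasonably be left in outline.
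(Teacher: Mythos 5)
Your proposal matches the paper's intent exactly: the paper proves this lemma by declaring it to follow from "identical reasoning" to Lemmas~\ref{lem:PhiK1-standard} and \ref{lem:PhiK2-standard}, and your pairing (part~(1) via the base-change/automorphism argument of Lemma~\ref{lem:PhiK2-standard}, part~(2) via the component-by-component staircase comparison of Lemma~\ref{lem:PhiK1-standard}, with $\theta$-weighted terms excluded by $U$-equivariance) is precisely that adaptation. The details you fill in (identifying the candidate components with $L_\sigma$, $T^{\ell}L_\tau$ and $\delta^{\sigma}$, $\delta^{\tau}$, and the grading bookkeeping) are consistent with the paper's constructions, so this is essentially the same proof.
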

The proof of the above lemma follows from identical reasoning as the proofs of Lemma~\ref{lem:PhiK1-standard} and ~\ref{lem:PhiK2-standard}, so we leave the details to the reader.

\subsection{The length 2 differentials}

We now analyze the length 2 differentials. The length 2 differential of the surgery complex decomposes as a sum of four maps, one for each orientation on $L$. The map $\Phi^{K_1\cup K_2}$ (positive orientation on both components) increases the $\gr_{\ws}$ grading by $1$. For an arbitrary orientation on $L$, denoted $\vec{L}$, if we let $\ve{p}\subset \{w_1,w_2,z_1,z_2\}$ contain one basepoint from each of $K_1$ and $K_2$, and furthermore contain $z_i$ if and only if $\fro$ is positively oriented on $K_i$ (and contain $z_i$ otherwise), then the map $\Phi^{\vec{L}}$ increases $\gr_{\ve{p}}$ by 1 (see \cite{ZemBordered}*{Lemma~6.4 (1)}). 

\begin{lem}
\label{lem:length-2-homotopic} Let $L$ be a 2-component L-space link and suppose that ${}^{\cK^!} \cX(L)^{\cK}$ is a $U$-equivariant model of the link surgery complex for $L$. If $F\colon {}^{\ve{\cR}_0^!} \cX(L)^{\cR_0}\to {}^{\cR_1^!} \cX(L)^{\cR_1}$ is a $U$-equivariant chain map which is weighted by multiples of $s$ and $\sigma$, and which has $+1$ $\gr_{\ws}$-grading and the same Alexander grading shift as $\Phi^{K_1\cup K_2}$, then $F$ is null-homotopic. The same statement holds for the other orientations on $K_1\cup K_2$, as long as the $\gr_{\ws}$ grading is replaced with the $\gr_{\ps}$ grading, where $\ps\subset\{w_1,z_1,w_2,z_2\}$ is as above.
\end{lem}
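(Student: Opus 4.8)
The plan is to analyze $F$ one $A_1$-grading at a time and reduce to a vanishing statement for chain maps out of the staircases $\cC_{s_1}$. First I would record the shapes of the two bimodules. By the identifications of Lemmas~\ref{lem:cube-point-00} and \ref{lem:cube-points-01-11}, the source ${}^{\ve{\cR}_0^!}\cX(L)^{\cR_0}$ in $A_1$-grading $s_1$ is the staircase $\cC_{s_1}$, whose underlying $\cR_0$-module is a reduced free resolution of $H_0:=\bigoplus_{s_2}\cHFL(L,s_1,s_2)$, a module supported in even $\gr_\ws$; hence $\cC_{s_1}^0$ (algebraic grading $0$) is supported in even $\gr_\ws$, $\cC_{s_1}^1$ in odd $\gr_\ws$, and $\d\colon\cC_{s_1}^1\hookrightarrow\cC_{s_1}^0$ is injective. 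The target ${}^{\cR_1^!}\cX(L)^{\cR_1}$ is, in each $A_1$-grading, the single generator $T^{s_1}\ve{e}$, with vanishing $1\in\cR_1^!$-weighted differential and free $\cR_1$-action; its homology $H_\ast(\cC_{(1,1)})$ is the link Floer homology of a two-basepoint empty link, supported in even $\gr_\ws$, so the whole target model is supported in even $\gr_\ws$. Finally, since $F$ is $U$-equivariant and weighted by multiples of $s$, and in $\cK^!$ the only such monomials with no $\theta$ factor are $s$ and $zs$ (because $ws=0$ and $z^2s=0$), $F$ decomposes as $F=F_s+F_{zs}$.

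Next I would handle the leading term $F_s$. The $s$-weighted component of $\d(F)=0$ — the target has no $1\in\cR_1^!$-weighted internal differential, and $wz\cdot s=zw\cdot s=0$ in $\cK^!$, so no weight-$2$ source arrows contribute — says that for each $s_1$ the piece $(F_s)_{s_1}\colon\cC_{s_1}\to T^{s_1+c}\ve{e}\otimes\cR_1\sigma$ is a chain map, where $c$ is fixed by the hypothesis that $F$ has the Alexander grading of $\Phi^{K_1\cup K_2}$. Because $F$ raises $\gr_\ws$ by the odd number $1$, it sends even-graded elements to odd-graded elements; as $\cC_{s_1}^0$ is even while the target has no odd part, $(F_s)_{s_1}$ vanishes on $\cC_{s_1}^0$, i.e.\ it is supported on $\cC_{s_1}^1$. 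Interpreting the $\sigma$-weighted output as a pushforward along $\phi^\sigma\colon\cR_0\to\cR_1$ (as in the proof of Lemma~\ref{lem:PhiK2-standard}), the target becomes $\bar M:=T^{s_1+c}\ve{e}\otimes\cR_1$, which as a $\cR_0$-module is $\cR_0[Z^{-1}]$ (since $\phi^\sigma(Z)=T$ is a unit and $\phi^\sigma(W)=UT^{-1}$ a nonzerodivisor); after inverting $Z$ the differential of $\cC_{s_1}$ is split injective, so $\Ext^1_{\cR_0}(H_0,\cR_0[Z^{-1}])=0$. Hence $(F_s)_{s_1}\colon\cC_{s_1}^1\to\bar M$ extends over $\d\colon\cC_{s_1}^1\hookrightarrow\cC_{s_1}^0$, and an extension $h^0_{s_1}\colon\cC_{s_1}^0\to\bar M$ furnishes a null-homotopy of $(F_s)_{s_1}$. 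Assembling the $h^0_{s_1}$ into a single $U$-equivariant, $s$-weighted $DD$-morphism $h$ and replacing $F$ by $F+\d(h)$, I may assume the $s$-weighted part of $F$ vanishes.

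With $F_s=0$, the $zs$-weighted component of $\d(F)=0$ reduces (the possible cross-terms $[\varphi_+]\circ F_s$ and $F_s\circ[Z]$ now vanish) to the statement that each $(F_{zs})_{s_1}$ is again a chain map $\cC_{s_1}\to T^{s_1+c'}\ve{e}\otimes\cR_1\sigma$, and the identical parity-plus-$\Ext$ argument produces a $zs$-weighted null-homotopy $h'$ killing it; since there are no $U$-equivariant weights that are higher multiples of $s$ (as $wzs=zwzs=\cdots=0$ in $\cK^!$), after this step $F=0$, so the original $F$ is null-homotopic. For the other orientations of $K_1\cup K_2$ one repeats the argument with $\gr_\ws$ replaced by $\gr_\ps$, the weight $s$ by $t$ and $\sigma$ by $\tau$: the relevant homology groups are still supported in even $\gr_\ps$ (a parity computation from the grading-change formula for L-space links), and pushing forward along $\phi^\tau$ identifies the target with $\cR_0[W^{-1}]$, giving the same $\Ext^1$-vanishing. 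The step I expect to demand the most care is the grading bookkeeping: verifying that $\Phi^{K_1\cup K_2}$ genuinely has \emph{odd} $\gr_\ws$-degree relative to the even-supported target (and likewise for $\gr_\ps$), and pinning the $\sigma$- and $\tau$-weighted outputs down precisely enough to identify them with the localizations $\cR_0[Z^{-1}]$ and $\cR_0[W^{-1}]$; once those are in place the obstruction vanishes and the induction terminates at once.
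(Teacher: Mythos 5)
Your first step—killing the $s$-weighted part of $F$ on the staircases by extending $(F_s)|_{\cC^1_{s_1}}$ over the injection $\d\colon \cC^1_{s_1}\hookrightarrow \cC^0_{s_1}$ using $\Ext^1_{\cR_0}(H_0,\cR_0[Z^{-1}])=0$—is sound, and is consistent with the paper (after its reduction, $s$-weighted components are ruled out by parity alone). The gap is in your second step. By the same parity count you use for $F_s$, the $zs$-weighted component shifts generator gradings by an \emph{even} amount (compare $(\gr_{\ws},\gr_{\zs})(L_{zs})$ with $(\gr_{\ws},\gr_{\zs})(L_s)$ in the definition of the candidate bimodule), so $F_{zs}$ is supported on the algebraic-grading-$0$ part $\cC^0_{s_1}$, not on $\cC^1_{s_1}$; the $zs$-part of $\d(F)=0$ then says $F_{zs}\circ \d_{\cC}=0$, i.e.\ $F_{zs}$ descends to a map on the homology $H_0$. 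A $zs$-weighted homotopy $h'$ is structurally incapable of cancelling such a term: since $w\cdot zs=z\cdot zs=0$ and $zs\,\varphi_\pm=0$ in $\cK^!$ (and the two $\theta|U$ contributions cancel), $\d(h')$ equals $h'\circ\d_{\cC}$, which is supported on $\cC^1_{s_1}$. So "the identical parity-plus-Ext argument" does not apply to $F_{zs}$—it cannot even modify it—and $F_{zs}$ is certainly not forced to vanish for formal reasons.

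The only mechanism that can cancel a $\cC^0$-supported, $zs$-weighted component is the $zs$-part of the differential of an \emph{$s$-weighted} homotopy $h$, namely $h\circ L_z+L_{\varphi_+}\circ h$, subject to the extra constraint $h\circ\d_{\cC}=0$ so that the already-killed $s$-component is not reintroduced. This is precisely what the paper does: it first uses the factorization of $F$ through ${}_{\cR_0}[\phi^{\sigma}]^{\cR_1}$, together with the fact that $\Phi^{K_2}$ is a homotopy equivalence, to replace the staircase source by the small model ${}^{\cR_0^!}\cX(L)^{\cR_1}$ with one generator $\ys_{s_1}$ per $A_1$-grading, and then writes down the telescoping homotopy $j(\ys_{s_1})=\bigl(\sum_{t_1\ge s_1}\epsilon(t_1)\bigr)\, s\otimes T^{s_1}\ve{e}\otimes U^{H_{K_1}(s_1)-1}$. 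The essential, non-formal input making this sum finite is that $\epsilon(s_1)=0$ whenever $H_{K_1}(s_1)=0$ (otherwise negative powers of $U$ would be required), which is where $U$-equivariance and positivity of the $U$-powers enter. Your proposal never engages with this finiteness/telescoping point, which is the actual content of the lemma; the Ext-vanishing disposes only of the easy $s$-weighted half.
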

\begin{proof} 
As in the proof of Lemma~\ref{lem:PhiK2-standard}, we can interpret the map $F$  in the statement  as a chain map from
\[
{}^{\cR_0^!} \cX(L)^{\cR_0}\boxtimes {}_{\cR_0}[\phi^\sigma]^{\cR_1}\to {}^{\cR_1^!} \cX(L)^{\cR_1}.
\]
The left hand side is homotopy equivalent ($U$-equivariantly) to ${}^{\cR_0^!} \cX(L)^{\cR_1}$, and so we can view the map $F$ instead as a map
\[
F\colon {}^{\cR_0^!} \cX(L)^{\cR_1}\to {}^{\cR_1^!} \cX(L)^{\cR_1}.
\]
Write $\ys_{s_1}$ for the generators of ${}^{\cR_0^!} \cX(L)^{\cR_1}$ and write $T^{s_1}\ve{e}$ for the generators of ${}^{\cR_1^!} \cX(L)^{\cR_1}$. The only possible components of the map $F$ are to send $\ys_{s_1}$ to $zs\otimes T^{s_1+\delta+1}\ve{e}\otimes U^{H_{K_1}(s_1)-1}$ where $\delta$ is some fixed Alexander grading shift. (Components weighted by $s$ are impossible since a shift in the Maslov grading by $+1$ would be impossible since all generators have even $\gr_{\ws}$ grading; components weighted by $\theta s$ or $\theta s \varphi_+$ are impossible because we have assumed that $F$ is $U$-equivariant). Since $\delta$ is fixed, and the module ${}^{\cR_1^!} \cX(L)^{\cR_1}$ has a translation action which shifts this Alexander grading by $1$ but preserves the Maslov grading, we can assume that $\delta=0$, for convenience. 

Write $\epsilon(s_1)\in \Z/2$ for the coefficient of this component in $F$. (I.e. set $\epsilon(s_1)=1$ if and only if $F(\ys_{s_1})$ is non-zero). That is, say that
\begin{equation}
F(\ys_{s_1})=\epsilon(s_1) \cdot zs\otimes T^{s_1+1}\ve{e}\otimes U^{H_{K_1}(s_1)-1}.
\end{equation}
We observe firstly that $\epsilon(s_1)=0$ for $s_1\gg 0$. This is because $H_{K_1}(s_1)=0$ for $s_1\gg 0$, and so the above equation would involve negative powers of $U$, which is prohibited. In fact, we see that $\epsilon(s_1)=0$ if $H_{K_1}(s_1)=0$. 

We now define our homotopy by the equation
\[
j(\ys_{s_1})=  \left(\sum_{t_1\ge s_1}\epsilon(t_1) \right)s\otimes T^{s_1}\ve{e}\otimes U^{H_{K_1}(s_1)-1}.
\]
It is straightforward to see that this homotopy satisfies $\d(j)=F$.

The proof for other orientations on $L$ carries through \emph{mutatis mutandis}.
\end{proof}

\begin{rem} The above lemma is false if we omit ``$U$-equivariant'' from the statement. For example, we may take $\Phi^{K_1}\circ \Phi^{K_2}$ and multiply the $\cK^!$ entries by $\theta$. 
\end{rem}

\subsection{Proof of Proposition~\ref{prop:equivalence-completed-algebra}}

We now summarize our proof that ${}^{\ve{\cK}^!} \cX(L)^{\cK}\simeq {}^{\ve{\cK}^!} \cX^{\alg}(L)^{\cK}$.

\begin{proof}[Proof of Proposition~\ref{prop:equivalence-completed-algebra}]
The complex ${}^{\ve{\cK}^!} \cX(L)^{\cK}$ decomposes as a hypercube of type-$DD$ bimodules:
\begin{equation}
{}^{\ve{\cK}^!} \cX(L)^{\cK}=\begin{tikzcd}[column sep=2.5cm, row sep=2.5cm] {}^{\ve{\cR}_0^!} \cX(L)^{\cR_0}
	\ar[d, "\Phi^{K_1}+\Phi^{-K_1}"]
	\ar[dr, "\Phi^{\pm K_1,\pm K_2}"]
	\ar[r, "\Phi^{K_2}+\Phi^{-K_2}"]
	 & {}^{\ve{\cR}_0^!} \cX(L)^{\cR_1}
	 \ar[d, "\Phi^{K_1}+\Phi^{-K_1}"]\\
{}^{\cR_1^!} \cX(L)^{\cR_0} \ar[r, "\Phi^{K_2}+\Phi^{-K_2}"] & {}^{\cR_1^!} \cX(L)^{\cR_1}
\end{tikzcd}
\label{eq:hypercube-figure-X-L}
\end{equation}
By assumption, all of these complexes and maps are $U$-equivariant. The complex ${}^{\ve{\cK}^!} \cX^{\alg}(L)^{\cK}$ decomposes similarly. 

Proposition~\ref{prop:equivalence-over-completed-algebra} implies that ${}^{\ve{\cK}^!} \cX(L)^{\cK}$ is homotopy equivalent to a complex similar to the one in Equation~\eqref{eq:hypercube-figure-X-L}, but with ${}^{\cR_\veps^!}\cX^{\alg}(L)^{\cR_\nu}$ replacing each ${}^{\cR_{\veps}^!}\cX(L)^{\cR_\nu}$. Since the homotopy equivalences are $U$-equivariant, the resulting hypercube structure maps are also $U$-equivariant. Applying Lemmas~\ref{lem:PhiK1-standard}, ~\ref{lem:PhiK2-standard} and ~\ref{lem:PhiK-Phi-alg-K-extra-cases}, we conclude that $\Phi^{\pm K_i}\simeq \Phi^{\pm K_i}_{\alg}$, so ${}^{\ve{\cK}^!} \cX(L)^{\cK}$ is homotopy equivalent to a complex that agrees with ${}^{\ve{\cK}^!} \cX(L)^{\cK}$, except possibly along its diagonal (length 2) map. Lemma~\ref{lem:length-2-homotopic} implies that any two choices of diagonal map are homotopic, which implies that the two hypercubes are homotopy equivalent, completing the proof.
\end{proof}

\section{Relating the completed and uncompleted algebras}

\label{sec:uncompleted-algebra}

There are several natural ways that we can complete $\cK^!$:
\begin{enumerate}
\item No completions on  the algebra $\cK^!$.
\item Completing $\cK^!$ with respect to the tensor weight filtration on $\cR_0^!$.
\item Completing $\cK^!$ with respect to the tensor weight filtration on $\cR_0^!$ and $\cR_1^!$.
\item Completing $\cK^!$ with respect to the tensor weight filtration on $\cR_1^!.$
\end{enumerate}
We are interested in the first and second ways of completing the algebra. We will write $\cK^!$ and $\ve{\cK}^!$ for these two manners of completing the algebra. We do not have any applications of the third or fourth, so we will not consider it in this paper.

\begin{prop}
\label{prop:equivalence-of-categories} There is an $A_\infty$-equivalence of categories between ${}^{\cK^!}\MOD$ and ${}^{\ve{\cK}^!} \MOD.$
\end{prop}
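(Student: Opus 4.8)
The plan is to reduce the statement to a quasi-isomorphism between the two underlying curved $dg$-algebras and then to promote this to an equivalence of module categories by a perturbation-lemma argument of the same shape as the proof of Lemma~\ref{lem:cube-point-00}. The inclusion $\iota\colon \cK^!\hookrightarrow \ve{\cK}^!$ is a morphism of curved $dg$-algebras — it respects $\mu_2$, $\mu_1$, and the curvature $\mu_0=\varphi_-\varphi_++\varphi_+\varphi_-$, since the latter lives in the idempotent block $\cR_1^!$, which is not completed — and since the structure map of a type-$D$ module over the uncompleted algebra $\cK^!$ is a finite sum, pushing $\delta^1$ forward along $\iota$ defines a $dg$-functor $\cF\colon {}^{\cK^!}\MOD\to {}^{\ve{\cK}^!}\MOD$. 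I would then prove that $\cF$ is a quasi-equivalence, i.e.\ that it induces quasi-isomorphisms on morphism complexes and is essentially surjective on homotopy categories.

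The first step is to check that $\iota$ is a quasi-isomorphism. The two algebras differ only in that $\cR_0^!$ is replaced by its tensor-weight completion $\bm{\cR}_0^!$, so it is enough to show that $\cR_0^!\hookrightarrow \bm{\cR}_0^!$ is a quasi-isomorphism. This is a direct computation with $\mu_1(\theta)=wz+zw$: after floating the (at most one) $\theta$ to the front, a basis of $\cR_0^!$ consists of $1$, the alternating words $w,z,wz,zw,wzw,\dots$, and these same words times $\theta$; and $\mu_1$ pairs, for each $n\ge 1$, the weight-$n$ elements $u\theta$ ($u$ an alternating word) bijectively with the weight-$(n+2)$ alternating words $\mu_1(u\theta)$, while $\mu_1(\theta)=wz+zw$. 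It follows that both $\cR_0^!$ and $\bm{\cR}_0^!$ have homology of dimension $4$, spanned by $1,w,z,wz$ (this is the Koszul dual of $\bF[W,Z]$), and that $\iota$ induces an isomorphism on homology; the short exact sequence $0\to \cR_0^!\to \bm{\cR}_0^!\to \bm{\cR}_0^!/\cR_0^!\to 0$ then shows $\bm{\cR}_0^!/\cR_0^!$ is acyclic. The off-diagonal block $\ve{I}_0\cdot \cK^!\cdot \ve{I}_1$ and the block $\cR_1^!$ are already finite-dimensional and are not enlarged by completing $\cR_0^!$ (left multiplication by $w$ or $z$ annihilates $s$ and $t$ after one step), so $\iota$ is a quasi-isomorphism of curved $dg$-algebras.

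Granting this, quasi-full-faithfulness of $\cF$ is formal: for type-$D$ modules $\cX,\cY$ over $\cK^!$ the morphism complex over $\cK^!$ is a subcomplex of the morphism complex over $\ve{\cK}^!$, and on the associated graded of the weight filtration the quotient is a direct sum of shifted copies of $\bm{\cR}_0^!/\cR_0^!$ tensored with finite-dimensional data built from $\cX$ and $\cY$, hence acyclic. For essential surjectivity I would run the argument of Lemma~\ref{lem:cube-point-00} essentially verbatim: given a type-$D$ module ${}^{\ve{\cK}^!}\cY$, inductively on the weight $n$ apply Lemma~\ref{lem:non-standard-perturbation-lemma} with an endomorphism $h$ assembled from null-homotopies coming from the acyclicity of $\bm{\cR}_0^!/\cR_0^!$, chosen so as to eliminate all components of $\delta^1$ of weight $>n$ beyond those already forced by the components of weight $\le n$. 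The relevant $1+h$ are invertible over $\ve{\cK}^!$ because their inverses are geometric series converging in the weight filtration, each $h$ consists only of monomials of weight $\ge n$, and so the infinite composite of the resulting isomorphisms converges and identifies $\cY$ with a module pushed forward from $\cK^!$.

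The hard part will be the essential-surjectivity step: one must organize the inductive truncation so that each application of the perturbation lemma leaves the already-correct low-weight part of $\delta^1$ untouched while genuinely killing the next weight, and so that the infinite composite of corrections converges to an honest homotopy equivalence — exactly the bookkeeping of the proof of Lemma~\ref{lem:cube-point-00}, now carried out for an arbitrary module rather than a single one. Everything else is formal once the quasi-isomorphism $\iota$ is established. (One could instead invoke the general principle that a quasi-isomorphism of curved $dg$-algebras induces an equivalence of the appropriate module categories, but the direct argument above stays within the framework used throughout the paper.)
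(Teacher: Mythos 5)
Your route is genuinely different from the paper's, and it has a real gap at exactly the point you flag as ``the hard part.'' The paper does not deduce the equivalence from the fact that $\iota\colon \cK^!\hookrightarrow \ve{\cK}^!$ is a quasi-isomorphism; instead it constructs an explicit $A_\infty$-algebra morphism going the \emph{other} way, $\phi\colon \ve{\cK}^!\to \cK^!$, with $\phi_1$ a truncation at a fixed weight $N$ and a correction term $\phi_2(a,b)=(ab)'\theta$ that uses $\theta$ (via $\mu_1(\theta)=wz+zw$) to absorb the failure of truncation to be multiplicative; it then shows that the induced bimodules ${}^{\cK^!}[\phi]_{\ve{\cK}^!}$ and ${}^{\ve{\cK}^!}[\iota]_{\cK^!}$ compose to the identity on both sides via explicit maps $f,g$ with $f_2^1(1,a)=a'\theta\otimes 1$. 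That construction is precisely the content your sketch is missing: you never produce a functor from ${}^{\ve{\cK}^!}\MOD$ back to ${}^{\cK^!}\MOD$, and your essential-surjectivity plan --- run the induction of Lemma~\ref{lem:cube-point-00} ``verbatim'' for an arbitrary module over $\ve{\cK}^!$ --- does not go through as stated. That lemma leaned heavily on structure specific to L-space links (staircase complexes, the $(\gr_{\ws},\gr_{\zs})$-gradings, the Ext-vanishing Lemma~\ref{lem:ext-computation}) to know that the offending components are null-homotopic and to choose homotopies compatibly; for an arbitrary ${}^{\ve{\cK}^!}\cY$ you have none of this, you must explain why the weight-$>n$ components are obstructed only by homology classes of $\bm{\cR}_0^!$ in weights $\ge 3$ (which vanish) \emph{in a way compatible with the module data and with the curvature in idempotent $(1,1)$}, and why the limiting structure map actually lands in the uncompleted algebra. ``Assembled from null-homotopies coming from the acyclicity of $\bm{\cR}_0^!/\cR_0^!$'' is not yet an argument.

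Two further points. First, your claim that quasi-full-faithfulness is ``formal'' is not right as written: the quotient of morphism complexes is $\Hom(\cX,\cY\otimes(\bm{\cR}_0^!/\cR_0^!))$ with a differential that mixes $\mu_1$ (which \emph{raises} weight by one) with the module structure maps, so the associated graded of the weight filtration is not ``copies of $\bm{\cR}_0^!/\cR_0^!$''; with an unbounded filtration you need a genuine completeness/convergence argument, not just acyclicity of an associated graded. Second, the fallback ``general principle that a quasi-isomorphism of curved $dg$-algebras induces an equivalence of the appropriate module categories'' is not available here: categories of type-$D$ structures are of coderived type and are not invariants of quasi-isomorphism in general, and in the curved setting the notion is even more delicate. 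The proposition really does use the specific feature of $\cK^!$ that high-weight monomials in idempotent $(0,0)$ are boundaries of $\theta$-multiples in a multiplicatively controlled way --- that is what the paper's $\phi_2$ and the bimodule maps $f,g$ encode, and what your outline would still need to supply.
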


\begin{rem}
Conceptually, the proof is related to the fact that $\bm{\cR}_0^!$ and $\cR_0^!$ have homology equal to the exterior algebra on two generators. In fact, there is an $A_\infty$-algebra $\cK_\infty^!$ which is quasi-equivalent to both $\cK^!$ and $\ve{\cK}^!$. The algebra $\cK_\infty^!$ has in idempotent $(0,0)$ the exterior algebra on two generators in place of $\cR_0^!$. See \cite{ZemKoszul}*{Lemma~3.3}.
\end{rem}

 \begin{proof} We will define $A_\infty$-algebra morphisms $\phi$ and $\iota$ and consider the induced bimodules ${}^{\cK^!} [\phi]_{\ve{\cK}^!}$ and ${}^{\ve{\cK}^!} [\iota]_{\cK^!}$ (see \cite{LOTBimodules}*{Definition~2.2.48}). We will prove that $[\phi]\boxtimes [\iota]$ and $[\iota]\boxtimes [\phi]$ are both homotopy equivalent to the identity functors. Furthermore, these bimodules and the morphisms between them will be bounded as $DA$-bimodules (in the sense that $\delta_n^1=0$ if $n\gg 0$).  

The morphism $\iota$ is given by the inclusion map $\iota\colon \cK^!\to \ve{\cK}^!$. That is, we have $\iota_1$ is the inclusion and $\iota_j=0$ for $j>1$. 

The morphism $\phi$ is more complicated. We begin by defining the map $\phi_1\colon \ve{\cK}^!\to \cK^!$. We pick an integer $N\ge 4$. (The choice of $N$ is not important for our proof, so we could pick $N=4$). 

 If $a\in \ve{I}_0\cdot \cK^1\cdot \ve{I}_0$ is a monomial, we define $\wt_\theta(a)\in \{0,1\}$ to be $0$ if $a$ is not a multiple of $\theta$, and to be $1$ otherwise. We write $\wt(a)$ for the number of factors of $a$. (E.g. $\wt(wz)=2$, $\wt(\theta wzwz)=5$). We write $\wt_+(a)=\wt(a)+\wt_\theta(a)$. Notice that $\wt_+$ is preserved by $\mu_1$ and $\mu_2$. We set
\[
\phi_1(a)=
\begin{cases} a & \text{ if } \wt_+(a)\le N\\
0& \text{ otherwise}.
\end{cases}
\]
We define $\phi_1$ to be the identity on all monomials in idempotent $(0,1)$ or $(1,1)$. If $a$ and $b$ are monomials in idempotent $(0,0)$, we set
\[
\phi_2(a,b)=\begin{cases} (ab)'\theta & \text{ if } \wt_+(ab)>N, \wt_+(a)\le N, \wt_+(b)\le N\\
0& \text{ otherwise}.
\end{cases}
\]
In the above $(ab)'$ denotes the monomial obtained from $ab$ by reducing $\wt$ by 2 (by removing the last two non-$\theta$ symbols from $ab$) while preserving $\wt_\theta$. For example, $(wzwzwz)'=wzwz$ and $(zwzw\theta)'=zw\theta$.  We set $\phi_2(a,b)$ to be zero if $a$ or $b$ is not in idempotent $(0,0)$.

The compatibility condition is proven as follows. The compatibility condition with one input amounts to the claim that $\phi_1$ is a chain map, which is easy to verify. The compatibility condition on two inputs, $a$ and $b$, is proven as follows. The compatibility condition amounts to the claim that
\[
\phi_1(ab)+\phi_1(a)\phi_1(b)+\mu_1 \phi_2(a,b)+\phi_2(\mu_1 a,b)+\phi_2(a,\mu_1 b)=0.
\]
Note that the claim is only non-trivial when $ab\neq 0$. 
The proof of this relation breaks into several cases:
\begin{enumerate}
\item ($\wt_+(a)\le N$, $\wt_+(b)\le N$ and $\wt_+(ab)\le N$) In this case, $\phi_2(a,b)=\phi_2(\mu_1 a,b)=\phi_2(a,\mu_1 b)=0$ and the compatibility condition is straightforward to verify.
\item ($\wt_+(a)>N$ or $\wt_+(b)>N$) In this case, $\phi_1(ab)+\phi_1(a)\phi_1(b)=0$, and also $\phi_2(a,b)=\phi_2(\mu_1 a,b)=\phi_2(a,\mu_1 b)=0$, so the compatibility condition follows.
\item ($\wt_+(a)\le N$, $\wt_+(b)\le N$ and $\wt_+(ab)>N$) In this case, $\phi_1(a)\phi_1(b)+\phi_1(ab)=ab$.  If $\wt_\theta(a)=\wt_\theta(b)=0$, then $\mu_1 \phi_2(a,b)=ab$ while $\phi_2(\mu_1a,b)=\phi_2(a,\mu_1b)=0$, so the compatibility condition follows. If $\wt_\theta(a)=1$ while $\wt_\theta(b)=0$, then $\mu_1 \phi_2(a,b)=\phi_2(a,\mu_1b)=0$ while $\phi_2(\mu_1a,b)=ab$, so the compatibility condition follows. The case that $\wt_\theta(a)=0$ while $\wt_\theta(b)=1$ is similar. Finally, if $\wt_\theta(a)=\wt_\theta(b)=1$, then $ab=\mu_1 \phi_2(a,b)=\phi_2(\mu_1 a,b)=\phi_2(a,\mu_1 b)=0$, so the compatibility condition follows.
\end{enumerate}
We now consider the compatibility condition with three or more inputs. We focus on three inputs since the case of more than three inputs follows from similar reasoning. Similar to the previous cases, the condition is only non-trivial if all inputs are in $\ve{I}_0\cdot \cR_0^!\cdot \ve{I}_0$. Let $F\colon R_0^!\otimes R_0^!\otimes R_0^!\to R_0^!$ be the map
\[
F(a,b,c)=\mu_2(\phi_2(a,b), \phi_1(c))+\mu_2(\phi_1(a), \phi_2(b,c))+\phi_2(\mu_2(a,b),c)+\phi_2(a,\mu_2(b,c)).
\]
Since $\phi_3=0$, this is the sum of all of the terms of the compatibility condition. The compatibility conditions for one and two inputs imply that $F$ is a chain map (where we equip the domain with the differential $\mu_1\otimes \bI^2+\bI\otimes \mu_1\otimes \bI+\bI^2\otimes \mu_1$). Furthermore, $\wt_\theta$ gives a grading on both the domain and the codomain of $F$ which $F$ increases by $1$. Schematically, if we write $(C_3\to C_2\to C_1\to C_0)$ for the domain of $F$ (where the subscript indicates the $\wt_\theta$-grading) and $(E_1\to E_0)$ denotes the codomain, then $F$ induces a map from $C_1$ to $E_0$, and $F$ vanishes on $C_3$, $C_2$ and $C_1$. Since $F$ is a chain map and the differential vanishes on $E_0$, we have that $\mu_1\circ F$ vanishes. However $R_0^!$ has trivial homology in $\wt_\theta$-grading 1, so $\mu_1\colon E_1\to E_0$ is injective, and therefore $F=0$. Therefore the compatibility condition is satisfied. A similar argument shows that all of the compatibility conditions with more inputs also are satisfied.

We now construct morphisms of bimodules
\[
\begin{tikzcd}{}^{\cK^!}[\phi]_{\ve{\cK}^!}\boxtimes {}^{\ve{\cK}^!}[\iota]_{\cK^!}
  \ar[r,shift left, "f"]&{}^{\cK^!}\bI_{\cK^!} \ar[l, "g", shift left]
\end{tikzcd}
\]
which satisfy $g\circ f=\bI$ and $f\circ g=\bI$. 

We define $f$ as follows. We set $f_1^1(1)=1\otimes 1$. We set $f_2^1$ to vanish except on idempotent $(0,0)$. We set $f_2^1(1,a)$ to vanish unless $\phi_1(a)=0$ (i.e. $\wt_+(a)>N$) and $\wt_\theta(a)=0$. In this case, we set $f_2^1(1,a)=a'\theta\otimes 1$, where $a'$ again means the monomial obtained by reducing the $\wt(a)$ by 2 while preserving $\wt_\theta(a)$. 

One easily checks that $f$ is a morphism of bimodules. We define the map $g$ using the same formula as $f$. It is easy to check that both $f\circ g$ and $g\circ f$ are equal to the identity.

Next, we construct an isomorphism between ${}^{\ve{\cK}^!}[\iota]_{\cK^!}\boxtimes {}^{\cK^!}[\phi]_{\ve{\cK}^!}$ and ${}^{\ve{\cK}^!} \bI_{\ve{\cK}^!}$. We observe that the structure maps of ${}^{\ve{\cK}^!}[\iota]_{\cK^!}\boxtimes {}^{\cK^!}[\phi]_{\ve{\cK}^!}$ have the same formula as those from ${}^{\cK^!}[\phi]_{\ve{\cK}^!}\boxtimes {}^{\ve{\cK}^!}[\iota]_{\cK^!}$. Furthermore, the bimodule maps $f$ and $g$ we defined above reduce the weight filtration by at most 1, and hence they induce well defined maps over the completed algebra $\ve{\cK}^!$. In particular, we also conclude that ${}^{\ve{\cK}^!}[\iota]_{\cK^!}\boxtimes {}^{\cK^!}[\phi]_{\ve{\cK}^!}$ and ${}^{\ve{\cK}^!} \bI_{\ve{\cK}^!}$ are isomorphic, completing the proof. 
\end{proof}

\begin{example} The complex $\begin{tikzcd}\xs \ar[r, "1+wz"]& \ys
\end{tikzcd}$ is contractible in ${}^{\bm{\cR}_0^!} \MOD$, because we can define a null-homotopy $H$ of the identity by $H(\ys)=\sum_{n=0}^\infty (wz)^n \otimes \xs$. We observe that this complex is also contractible in ${}^{\cR_0^!} \MOD$ because we can instead take $H(\ys)=(1+wz)\otimes \xs+ wz\theta\otimes \ys$, $H(\xs)=wz\theta \otimes \xs$. 
\end{example}

As a consequence, we obtain the main theorem:

\begin{thm} If $\scA$ is an arc system so that ${}^{\cK^!} \cX(L;\scA)^{\cK}$ is $U$-equivariant, then there is a homotopy equivalence of type-$DD$ bimodules ${}^{\cK^!} \cX(L;\scA)^{\cK}\simeq {}^{\cK^!} \cX^{\alg}(L)^{\cK}$.
\end{thm}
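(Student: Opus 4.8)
The plan is to deduce this from Proposition~\ref{prop:equivalence-completed-algebra}, which already furnishes the homotopy equivalence over the completed algebra $\ve{\cK}^!$, by transporting that equivalence down to the uncompleted algebra $\cK^!$ using the equivalence of categories of Proposition~\ref{prop:equivalence-of-categories}.

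First I would record that the candidate bimodule ${}^{\cK^!}\cX^{\alg}(L)^{\cK}$ constructed in Section~\ref{sec:complexes-at-idempotents} genuinely lives over the uncompleted $\cK^!$: inspecting the definition, every monomial of $\cK^!$ appearing in its structure map $\delta^{1,1}$ has tensor weight at most $2$ (the weight-$2$ contributions coming from the $wz$, $zw$, $zs$ and $wt$ terms), so no completion of the $\cR_0^!$ factor is needed. Hence ${}^{\ve{\cK}^!}\cX^{\alg}(L)^{\cK}$ is precisely the extension of scalars ${}^{\ve{\cK}^!}[\iota]_{\cK^!}\boxtimes {}^{\cK^!}\cX^{\alg}(L)^{\cK}$ along the inclusion $\iota\colon\cK^!\to\ve{\cK}^!$; similarly, fixing a $U$-equivariant model of ${}^{\cK^!}\cX(L;\scA)^{\cK}$ (which exists by hypothesis), its completion ${}^{\ve{\cK}^!}\cX(L;\scA)^{\cK}={}^{\ve{\cK}^!}[\iota]_{\cK^!}\boxtimes {}^{\cK^!}\cX(L;\scA)^{\cK}$ remains $U$-equivariant. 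Proposition~\ref{prop:equivalence-completed-algebra} then gives
\[
{}^{\ve{\cK}^!}[\iota]_{\cK^!}\boxtimes {}^{\cK^!}\cX(L;\scA)^{\cK}\;\simeq\;{}^{\ve{\cK}^!}[\iota]_{\cK^!}\boxtimes {}^{\cK^!}\cX^{\alg}(L)^{\cK}.
\]

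Next I would apply the functor ${}^{\cK^!}[\phi]_{\ve{\cK}^!}\boxtimes(-)$ to both sides. Since ${}^{\cK^!}[\phi]_{\ve{\cK}^!}$ is a bounded $DA$-bimodule ($\phi_j=0$ for $j\ge 3$), these box products are defined and the functor preserves homotopy equivalences; because $[\phi]$ and $[\iota]$ involve only the left algebra, tensoring on the left leaves the right $\cK$-action untouched and, by the same reasoning used just before Definition~\ref{def:U equivariant modules}, preserves $U$-equivariance. Using the isomorphism ${}^{\cK^!}[\phi]_{\ve{\cK}^!}\boxtimes {}^{\ve{\cK}^!}[\iota]_{\cK^!}\cong {}^{\cK^!}\bI_{\cK^!}$ established in the proof of Proposition~\ref{prop:equivalence-of-categories}, the left-hand side becomes ${}^{\cK^!}\cX(L;\scA)^{\cK}$ and the right-hand side becomes ${}^{\cK^!}\cX^{\alg}(L)^{\cK}$, so chaining the three equivalences proves the theorem.

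\textbf{Expected difficulty.} I do not expect a genuine obstacle here; the proof is formal once Propositions~\ref{prop:equivalence-completed-algebra} and \ref{prop:equivalence-of-categories} are in hand. The points requiring a little care are: (i) confirming that $\cX^{\alg}(L)$ carries no structure-map terms of $\cK^!$-weight exceeding $2$, so that the statement over the uncompleted $\cK^!$ is even meaningful; (ii) the bookkeeping of tensoring the $DA$-bimodules $[\phi]$ and $[\iota]$ against $DD$-bimodules over $(-,\cK)$ on the left, keeping track that the right $\cK$-action and the $U$-equivariance survive; and (iii) noting that Proposition~\ref{prop:equivalence-of-categories}, although phrased for the module categories ${}^{\cK^!}\MOD$ and ${}^{\ve{\cK}^!}\MOD$, applies to the present $DD$-bimodules since $[\phi]$ and $[\iota]$ act purely on the $\cK^!$ side. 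Finally, combining with Theorem~\ref{thm:forthcoming} (which provides an arc system $\scA$ meeting the $U$-equivariance hypothesis) and with the Koszul-duality identification ${}_{\cK}\cX^{\alg}(L)^{\cK}={}_{\cK}[\cTr]_{\cK^!}\boxtimes {}^{\cK^!}\cX^{\alg}(L)^{\cK}$, this also recovers Theorem~\ref{thm:main-computation-intro}.
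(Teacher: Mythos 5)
Your proposal is correct and follows essentially the same route as the paper, which deduces the theorem directly from Proposition~\ref{prop:equivalence-completed-algebra} together with the equivalence of categories in Proposition~\ref{prop:equivalence-of-categories}; your write-up merely makes explicit the bookkeeping (weight $\le 2$ of $\delta^{1,1}_{\alg}$, persistence of $U$-equivariance, and tensoring with $[\phi]$ and $[\iota]$) that the paper leaves implicit.
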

\begin{proof} Proposition~\ref{prop:equivalence-completed-algebra} implies that there is an equivalence after over the completed algebra $\ve{\cK}^!$. Proposition~\ref{prop:equivalence-of-categories} now implies that this equivalence holds also on the uncompleted algebras $\cK^!$. 
\end{proof}

By tensoring the above equivalence with the dualizing bimodule ${}_{\cK} [\cTr]_{\cK^!}$, we obtain the statement in Theorem~\ref{thm:main-computation-intro}.

\section{Examples}
In this section, we illustrate the construction of the bimodules ${}^{\cK^!} \cX^{\alg}(L)^{\cK}$ when $L$ is the positively clasped Whitehead link $\Wh_+= K_1\cup K_2$, equipped with framing $(\lambda_1,\lambda_2)=(0,0)$. We first describe the bimodule ${}^{\cK^!} \cX^{\alg}(\Wh_+)^{\cK}$, and then we compute the surgery complex of the knot $W\subset S^1\times S^2$ obtained by performing $0$-surgery to one component of $\Wh_+$.

\subsection{The bimodule ${}^{\cK^!} \cX^{\alg}(\Wh_+)^{\cK}$}

In this section, we compute the bimodule ${}^{\cK^!} \cX^{\alg}(\Wh_+)^{\cK}$. The $(0,0)$ and $(1,0)$ idempotents of ${}_{\cK} \cX^{\alg}(\Wh_+)^{\cK}$ are described in \cite{CZZSatellites}*{Section~6.2} (see also \cite{CZZApplications}*{Section~4.1}). As described in \cite{ZemKoszul}*{Section~5.3}, the description of the $DD$-bimodule over $(\cK^!,\cR_0)$ is essentially equivalent to this perspective.

%
%
 
 In Figure~\ref{fig:Whitehead link}, we illustrate the $H$-function of $\Wh_+$. The boxed entry represents generators of algebraic grading $0$ in the $(0,0)$ idemponent.

\begin{figure}[h!]
	\adjustbox{scale=0.8}{
		\begin{tabular}{m{4cm}p{8cm}}
			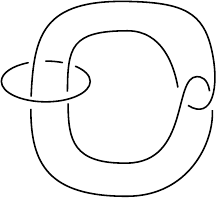
			&$ \begin{array}{|c|cccccccc|}
				\hline
				\hbox{\diagbox{$s_2$}{$s_1$}}&\cdots&-2&-1&0&1&2&\cdots&\\
				\hline	\vdots&\ddots&2&1&0&0&0&\reflectbox{$\ddots$}&\\
				1&\cdots&2&1 &\boxed{0}&0&0 & \cdots& \\
				0&\cdots&\boxed{2}&\boxed{1} &1&\boxed{0}&\boxed{0} &\cdots&\\
				-1&\cdots&3&2 &\boxed{1}&1&1& \cdots&\\
				\vdots&\reflectbox{$\ddots$}&\vdots&\vdots &\vdots&\vdots&\vdots&\ddots &\,\\
				\hline
			\end{array}
			$
		\end{tabular}
	}
	\caption{The $H$-function of the positively clasped Whitehead link $\Wh_+$. }
	\label{fig:Whitehead link}
\end{figure}

The bimodule ${}^{\cR_0^!} \cX(\Wh_+)^{\cR_{0}}$ takes the following form:
	\[	
\adjustbox{scale=0.8}{
	\begin{tikzcd}[labels=description, column sep=1.4cm]
		&&& \xs^0_0
		\ar[dl, bend left =20, "w|Z",pos=0.7]
		\ar[dr, "z|Z", bend left]
		\ar[d,bend left =80, "{zw|Z}"]
		&&
		\\[.5cm]
		\cdots
		\ar[r,bend left, "z|U"]
		& \xs_{-2}
		\ar[r, bend left, "z|U"]
		\ar[l, bend left, "w|1"]
		&
		\xs_{-1}
		\ar[ur, bend left, "z|W"]
		\ar[l, "w|1", bend left]
		&
		\xs_0^1
		\ar[u, "1|W"]
		\ar[d, "1|Z"]
		&
		\xs_1
		\ar[r, bend left, "z|1"]
		\ar[dl, "w|Z", bend left]
		&
		\xs_2
		\ar[l, bend left, "w|U"]
		\ar[r,bend left, "z|1"]
		&
		\cdots
		\ar[l,bend left, "w|U"]
		\\[.5cm]
		&&&
		\xs_0^2
		\ar[ul, "w|W", bend left]
		\ar[ur, "z|W", bend left=20,pos = 0.75]
		\ar[u,bend left =70, "{wz|W}"]
		&&
	\end{tikzcd}
}
\]

Here, $\xs^{j}_{s_1}$ denote the $j$th generator in the staircase complex $\cC_{s_1}$ with the first Alexander grading $A_1=s_1$, and we denote $\xs^{0}_{s_1}$ by $\xs_{s_1}$ if there is only a single generator in $\cC_{s_1}$. The arrows indicate the $\delta^{1,1}$ differential. Each generator has a $\theta$-weighted arrow $\theta|U$ pointing to itself by the $U$-equivariant condition, which we omit from the drawing. (We will omit such arrows for other idempotents as well to simplify the drawing.)

The gradings of the generators are given by 
\[
\begin{split}
(\gr_{\ws},\gr_{\zs},A_1,A_2)(\xs^0_0) &= (0,-2,0,1)\\
 (\gr_{\ws},\gr_{\zs},A_1,A_2)(\xs^1_0) &= (-1,-1,0,0),\\
  (\gr_{\ws},\gr_{\zs},A_1,A_2)(\xs_0^2) &= (-2,0,0,-1),
  \end{split}\]
and for $s_1\neq 0$,
\begin{equation*}(\gr_{\ws},\gr_{\zs},A_1,A_2)(\xs_{s_1}) = \begin{cases}
		(0,-2s_1,s_1,0) & \text{if $s_1>0,$}\\
		(2s_1,0,s_1,0) & \text{if $s_1<0$.}
	\end{cases}
\end{equation*}

Since $K_2$ is an unknot (after deleting $K_1$), the complex $\cS=\cCFK(K_2)$ is spanned by a single generator, for which we write $\xs'$. The bimodule ${}^{\cR_1^!} \cX(\Wh_+)^{\cR_{0}}$ therefore takes the following form: 
	\[	
	\begin{tikzcd}[labels=description, column sep=1.4cm]
		\cdots
		\ar[r, bend left, "\varphi_+|1"]
		& T^{-1}\xs'
		\ar[r, bend left, "\varphi_+|1"]
		\ar[l, bend left, "\varphi_-|1"]
		&T^{0}\xs'
		\ar[r, "\varphi_+|1", bend left]
		\ar[l, "\varphi_-|1", bend left]
		&T^{1}\xs'
		\ar[r, bend left, "\varphi_+|1"]
		\ar[l, bend left,"\varphi_-|1"]
		&\cdots 
		\ar[l, bend left,"\varphi_-|1"]
	\end{tikzcd},
\]
where $\xs'$ is the generator of $\cCFK(K_2)$, and $K_2$ is the unknot. We set
\[
(\gr_{\ws},\gr_{\zs},A_1,A_2)(T^i\xs') = (0,0,i,0).
\]

The $(0,1)$ idempotent ${}^{\cR_0^!} \cX(\Wh_+)^{\cR_{1}}$ takes the following form:
\[	
	\begin{tikzcd}[labels=description, column sep=1.4cm]
		\cdots
		\ar[r, bend left, "z|U"]
		& \ys_{-1}
		\ar[r, bend left, "z|U"]
		\ar[l, bend left, "w|1"]
		&\ys_{0}
		\ar[r, "z|1", bend left]
		\ar[l, "w|1", bend left]
		&\ys_{1}
		\ar[r, bend left, "z|1"]
		\ar[l, bend left,"w|U"]
		&\cdots 
		\ar[l, bend left,"w|U"]
	\end{tikzcd}
\] 
Since $K_1$ is the unknot, we compute \[\eta(s_1)= H_{K_1}(s_1) -H_{K_1}(s_1+1)  = \max\left\{0,-s_1\right\}. \]

The gradings of $\ys_{s_1}$ are given by 

\begin{equation*}(\gr_{\ws},\gr_{\zs},A_1,A_2)(\ys_{s_1}) = \begin{cases}
		(0,-2s_1,s_1,0) & \text{if $s_1\ge0,$}\\
		(2s_1,0,s_1,0) & \text{if $s_1<0$.}
	\end{cases}
\end{equation*}

The $(1,1)$ idempotent ${}^{\cR_1^!} \cX(\Wh_+)^{\cR_{1}}$ takes the following form:

\[	
\begin{tikzcd}[labels=description, column sep=1.4cm]
	\cdots
	\ar[r, bend left, "\varphi_+|1"]
	& T^{-1}\ve{e}
	\ar[r, bend left, "\varphi_+|1"]
	\ar[l, bend left, "\varphi_-|1"]
	&T^{0} \ve{e}
	\ar[r, "\varphi_+|1", bend left]
	\ar[l, "\varphi_-|1", bend left]
	&T^{1}\ve{e}
	\ar[r, bend left, "\varphi_+|1"]
	\ar[l, bend left,"\varphi_-|1"]
	&\cdots 
	\ar[l, bend left,"\varphi_-|1"]
\end{tikzcd},
\]
which is the same for every two-component $L$-space link in $S^3$. Each generator $T^i\ve{e}$ has a single Maslov grading equal to $0$, and has Alexander gradings $(A_1,A_2)(T^i \ve{e})=(i,0)$.

The length $1$ map $\Phi^{+K_1} +\Phi^{-K_1}:  {}^{\cR_0^!} \cX(\Wh_+)^{\cR_0} \to  {}^{\cR_1^!} \cX(\Wh_+)^{\cR_0}$ is shown below (this computation is also performed in \cite{CZZSatellites}*{Section~6.2})
	\[	
\adjustbox{scale=0.8}{
	\begin{tikzcd}[labels=description] 
			\\[1.2cm]
		{}^{\cR_0^!} \cX(\Wh_+)^{\cR_0}\ar[dd, "\Phi^{+K_1} +\Phi^{-K_1} \quad ="]
		\\  [.5cm]
		\\ [2cm]		
		{}^{\cR_1^!} \cX(\Wh_+)^{\cR_0}
	\end{tikzcd}
\hspace{-.2cm}
	\begin{tikzcd}[labels=description, column sep=1.4cm]
		&&& \xs^0_0
		\ar[dl, bend left, "w|Z",pos=0.7]
		\ar[dr, "z|Z", bend left]
		\ar[ddd,bend left=20, "\substack{s|Z\\ +t|Z}",pos=.7]
		\ar[d,bend left =80, "{zw|Z}"]
		&&
		\\[.5cm]
		\cdots
		\ar[r,bend left, "z|U"]
		& \xs_{-2}
		\ar[r, bend left, "z|U"]
		\ar[l, bend left, "w|1"]
		\ar[dd,"\substack{s|U^2\\ +t|1}"]
		&
		\xs_{-1}
		\ar[ur, bend left, "z|W"]
		\ar[l, "w|1", bend left]
		\ar[dd, "\substack{s|U \\ +t|1 }"]
		&
		\xs_0^1
		\ar[u, "1|W"]
		\ar[d, "1|Z"]
		&
		\xs_1
		\ar[r, bend left, "z|1"]
		\ar[dl, "w|Z", bend left]
		\ar[dd, "\substack{s|1 \\ +t|U}"]
		&
		\xs_2
		\ar[l, bend left, "w|U"]
		\ar[r,bend left, "z|1"]
		\ar[dd, "\substack{s|1\\+t|U^2}"]
		&
		\cdots
		\ar[l,bend left, "w|U"]
		\\[.5cm]
		&&&
		\xs^2_0
		\ar[ul, "w|W", bend left]
		\ar[ur, "z|W", bend left,crossing over,pos = 0.75]
		\ar[d,bend right, "\substack{s|W\\ +t|W}"]
		\ar[u,bend left =70, "{wz|W}"]
		&&
		\\[1.5cm]
		\cdots
		\ar[r, bend left, "\varphi_+|1"]
		&T^{-2}\xs'
		\ar[r, bend left, "\varphi_+|1"]
		\ar[l, bend left, "\varphi_-|1"]
		& T^{-1}\xs'
		\ar[r, bend left, "\varphi_+|1"]
		\ar[l, bend left, "\varphi_-|1"]
		&T^{0}\xs'
		\ar[r, "\varphi_+|1", bend left]
		\ar[l, "\varphi_-|1", bend left]
		&T^{1}\xs'
		\ar[r, bend left, "\varphi_+|1"]
		\ar[l, bend left,"\varphi_-|1"]
		&T^{2}\xs'
		\ar[r, bend left, "\varphi_+|1"]
		\ar[l, bend left,"\varphi_-|1"]
		&\cdots 
		\ar[l, bend left,"\varphi_-|1"]
	\end{tikzcd}
}
\]

Next, we compute the length $1$ map $\Phi^{+K_2} +\Phi^{-K_2}:  {}^{\cR_0^!} \cX(\Wh_+)^{\cR_0} \to  {}^{\cR_0^!} \cX(\Wh_+)^{\cR_1}$ by the description in Section \ref{sec:horizontal map from (0,0) to (0,1)}. The result is drawn in the following picture.
	\[	
\adjustbox{scale=0.9}{
	\begin{tikzcd}[labels=description] 
		\\[1.2cm]
		{}^{\cR_0^!} \cX(\Wh_+)^{\cR_0}\ar[dd, "\Phi^{+K_2} +\Phi^{-K_2} \quad ="]
		\\  [.5cm]
		\\ [2cm]		
		{}^{\cR_0^!} \cX(\Wh_+)^{\cR_1}
	\end{tikzcd}
	\hspace{-.2cm}
	\begin{tikzcd}[labels=description, column sep=1.4cm]
		&&& \xs^0_0
		\ar[dl, bend left, "w|Z",pos=0.7]
		\ar[dr, "z|Z", bend left]
		\ar[ddd,bend left=20, "\substack{1|T\sigma\\ +1|UT\tau}",pos=.7]
		\ar[d,bend left =80, "{zw|Z}"]
		&&
		\\[.5cm]
		\cdots
		\ar[r,bend left, "z|U"]
		& \xs_{-2}
		\ar[r, bend left, "z|U"]
		\ar[l, bend left, "w|1"]
		\ar[dd,"\substack{1|\sigma\\ +1|\tau}"]
		&
		\xs_{-1}
		\ar[ur, bend left, "z|W"]
		\ar[l, "w|1", bend left]
		\ar[dd, "\substack{1|\sigma \\ +1|\tau }"]
		&
		\xs_0^1
		\ar[u, "1|W"]
		\ar[d, "1|Z"]
		&
		\xs_1
		\ar[r, bend left, "z|1"]
		\ar[dl, "w|Z", bend left]
		\ar[dd, "\substack{1|\sigma \\ +1|\tau}"]
		&
		\xs_2
		\ar[l, bend left, "w|U"]
		\ar[r,bend left, "z|1"]
		\ar[dd, "\substack{1|\sigma\\+1|\tau}"]
		&
		\cdots
		\ar[l,bend left, "w|U"]
		\\[.5cm]
		&&&
		\xs^2_0
		\ar[ul, "w|W", bend left]
		\ar[ur, "z|W", bend left,crossing over,pos = 0.75]
		\ar[d,bend right, "\substack{1|UT^{-1}\sigma\\ +1|T^{-1}\tau}"]
		\ar[u,bend left =70, "{wz|W}"]
		&&
		\\[1.5cm]
		\cdots
		\ar[r, bend left, "z|U"]
		&\ys_{-2}
		\ar[r, bend left, "z|U"]
		\ar[l, bend left, "w|1"]
		& \ys_{-1}
		\ar[r, bend left, "z|U"]
		\ar[l, bend left, "w|1"]
		& \ys_{0}
		\ar[r, "z|1", bend left]
		\ar[l, "w|1", bend left]
		& \ys_{1}
		\ar[r, bend left, "z|1"]
		\ar[l, bend left,"w|U"]
		& \ys_{2}
		\ar[r, bend left, "z|1"]
		\ar[l, bend left,"w|U"]
		&\cdots 
		\ar[l, bend left,"w|U"]
	\end{tikzcd}
}
\]

For example, we have 
\[
\gr(R^{\sigma}(\xs^0_0)) = \gr(\xs^0_0)+ \gr(R^{\sigma}) =(0,-2,0,1)+(0,2,0,0) = (0,0,0,1), 
\]
where $\gr =(\gr_{\ws},\gr_{\zs},A_1,A_2)$ and $\ys_0$ has grading $\gr = (0,0,0,0)$. Therefore, 
\[
R^{\sigma}(\xs^0_0) = \ys_0\otimes T,
\]
 where $T$ in the coefficient carries the grading $A_2 =1$. This corresponds to the arrow $1|T\sigma$ from $\xs^0_0$ to $\ys_0$. The rest of the components can be computed in the same manner.

Next, we compute the length $1$ map $\Phi^{+K_1} +\Phi^{-K_1}:  {}^{\cR_0^!} \cX(\Wh_+)^{\cR_1} \to  {}^{\cR_1^!} \cX(\Wh_+)^{\cR_1}$. Since $K_1$ is the unknot, the $0$-framed knot surgery complex ${}_{\cK} \cX_0(K_1)^{\bF[U]}$ has the action
\[L_{\sigma} (\ys) = L_{\tau}(\ys) = \ve{e},\] where $\ys$ (resp. $\ve{e}$) is the unique generator in the $0$ (resp. $1$) idempotent of ${}_{\cK} \cX_0(K_1)^{\bF[U]}$. The linking number of the Whitehead link is $0$. Therefore, by the description in Section \ref{sec:vertical map from (0,1) to (1,1)}, the length $1$ map takes the following form:
\[
\adjustbox{scale=0.8}{
	\begin{tikzcd}[labels=description] 
	{}^{\cR_0^!} \cX(\Wh_+)^{\cR_1}\ar[d, "\Phi^{+K_1} +\Phi^{-K_1} \quad ="]
	\\ [1.5cm]		
	{}^{\cR_1^!} \cX(\Wh_+)^{\cR_1}
\end{tikzcd}
\begin{tikzcd}[labels={description}, column sep = 1.4cm]
		\cdots
		\ar[r, bend left, "z|U"]
		& \ys_{-2}
		\ar[l, bend left, "w|1"]
	\ar[r, bend left, "z|U"]
		\ar[d,"\substack{s|U^2\\ +t|1}"]
	& \ys_{-1}
	\ar[r, bend left, "z|U"]
	\ar[l, bend left, "w|1"]
	\ar[d,"\substack{s|U\\ +t|1}"]
	&\ys_{0}
	\ar[r, "z|1", bend left]
	\ar[l, "w|1", bend left]
	\ar[d,"\substack{s|1\\ +t|1}"]
	&\ys_{1}
	\ar[r, bend left, "z|1"]
	\ar[l, bend left,"w|U"]
	\ar[d,"\substack{s|1\\ +t|U}"]
		&\ys_{2}
	\ar[r, bend left, "z|1"]
	\ar[l, bend left,"w|U"]
	\ar[d,"\substack{s|1\\ +t|U^2}"]
	&\cdots 
	\ar[l, bend left,"w|U"]
	\\[1.5cm]
	\cdots
	\ar[r, bend left, "\varphi_+|1"]
	& T^{-2}\ve{e}
	\ar[r, bend left, "\varphi_+|1"]
	\ar[l, bend left, "\varphi_-|1"]
	& T^{-1}\ve{e}
	\ar[r, bend left, "\varphi_+|1"]
	\ar[l, bend left, "\varphi_-|1"]
	&T^{0}\ve{e}
	\ar[r, "\varphi_+|1", bend left]
	\ar[l, "\varphi_-|1", bend left]
	&T^{1}\ve{e}
	\ar[r, bend left, "\varphi_+|1"]
	\ar[l, bend left,"\varphi_-|1"]
	& T^{2}\ve{e}
	\ar[r, bend left, "\varphi_+|1"]
	\ar[l, bend left, "\varphi_-|1"]
	&\cdots 
	\ar[l, bend left,"\varphi_-|1"]
\end{tikzcd}
}
\]

Finally, we compute the length $1$ map $\Phi^{+K_2} +\Phi^{-K_2}:  {}^{\cR_1^!} \cX(\Wh_+)^{\cR_0} \to  {}^{\cR_1^!} \cX(\Wh_+)^{\cR_1}$. Since $K_2$ is the unknot, the $0$-framed surgery complex $\cX_0(K_2)^{\cK}$ takes the form:
\begin{equation}
	\xs' \xrightarrow{\sigma +\tau} \ve{e},
	\label{eq:surgery complex of the unknot}
\end{equation}
 where $\xs'$ (resp. $\ve{e}$) is the unique generator in the $0$ (resp. $1$) idempotent of $\cX_0(K_2)^{\cK}$. Then, by the description in Section \ref{sec:horizontal map from (1,0) to (1,1)}, the length 1 map takes the following form

\[
\adjustbox{scale=0.8}{
	\begin{tikzcd}[labels=description] 
		{}^{\cR_1^!} \cX(\Wh_+)^{\cR_0}\ar[d, "\Phi^{+K_2} +\Phi^{-K_2} \quad ="]
		\\ [1.5cm]		
		{}^{\cR_1^!} \cX(\Wh_+)^{\cR_1}
	\end{tikzcd}
	\begin{tikzcd}[labels={description}, column sep = 1.4cm]
		\cdots
	\ar[r, bend left, "\varphi_+|1"]
	&T^{-2}\xs'
	\ar[r, bend left, "\varphi_+|1"]
	\ar[l, bend left, "\varphi_-|1"]
	\ar[d, "\substack{1|\sigma\\+1|\tau}"]
	& T^{-1}\xs'
	\ar[r, bend left, "\varphi_+|1"]
	\ar[l, bend left, "\varphi_-|1"]
	\ar[d, "\substack{1|\sigma\\+1|\tau}"]
	&T^{0}\xs'
	\ar[r, "\varphi_+|1", bend left]
	\ar[l, "\varphi_-|1", bend left]
	\ar[d, "\substack{1|\sigma\\+1|\tau}"]
	&T^{1}\xs'
	\ar[r, bend left, "\varphi_+|1"]
	\ar[l, bend left,"\varphi_-|1"]
	\ar[d, "\substack{1|\sigma\\+1|\tau}"]
	&T^{2}\xs'
	\ar[r, bend left, "\varphi_+|1"]
	\ar[l, bend left,"\varphi_-|1"]
	\ar[d, "\substack{1|\sigma\\+1|\tau}"]
	&\cdots 
	\ar[l, bend left,"\varphi_-|1"]
		\\[1.5cm]
		\cdots
		\ar[r, bend left, "\varphi_+|1"]
		& T^{-2}\ve{e}
		\ar[r, bend left, "\varphi_+|1"]
		\ar[l, bend left, "\varphi_-|1"]
		& T^{-1}\ve{e}
		\ar[r, bend left, "\varphi_+|1"]
		\ar[l, bend left, "\varphi_-|1"]
		&T^{0}\ve{e}
		\ar[r, "\varphi_+|1", bend left]
		\ar[l, "\varphi_-|1", bend left]
		&T^{1}\ve{e}
		\ar[r, bend left, "\varphi_+|1"]
		\ar[l, bend left,"\varphi_-|1"]
		& T^{2}\ve{e}
		\ar[r, bend left, "\varphi_+|1"]
		\ar[l, bend left, "\varphi_-|1"]
		&\cdots 
		\ar[l, bend left,"\varphi_-|1"]
	\end{tikzcd}
}
\]

The length $2$ maps $\Phi^{\pm K_1,\pm K_2}: {}^{\cR_0^!} \cX(\Wh_+)^{\cR_0} \to  {}^{\cR_1^!} \cX(\Wh_+)^{\cR_1}$ vanish.

\subsection{The Whitehead knot in $S^1\times S^2$}

We now compute the surgery complex of the Whitehead knot $W\subset S^1\times S^2$ obtained by performing 0-surgery to $K_1$. Using the type-AA bimodule ${}_{\cK}[\cTr]_{\cK^!}$ defined in \cite{ZemKoszul}*{Section~3.4}, we have 
\begin{equation}
\cX_{0}(S^1\times S^2,W) ^{\cK} \simeq \cX_{0}(U)^{\cK}\boxtimes {}_{\cK}[\cTr]_{\cK^!} \boxtimes {}^{\cK^{!}} \cX^{\alg}(\Wh_+)^{\cK}, \label{eq:Whitehead-knot-tensor-product}
\end{equation} 
Here, $U$ is the unknot in $S^3$, with the $0$-framed surgery complex $\cX_{0}(U)^{\cK}$ shown below:
\[
\cX_0(U)^{\cK}=(\begin{tikzcd} \ve{X}_0 \ar[r, "\sigma+\tau"] & \ve{X}_1 \end{tikzcd}).
\]
%
%
%
 
 It is straightforward to see directly from the construction in \cite{ZemKoszul}*{Section~3.4} that the only sequence of monomials $a_1,\dots, a_n\in \cK^!$ such that
 \[
 m_{1|1|n}(\sigma,1,a_1,\dots, a_n)\neq 0
 \]
 has $n=1$ and $a_1=s$. The analogous claim holds for sequences that pair non-trivially with $\tau$. 
 
 The idempotent 0 subspace of $\cX_0(S^1\times S^2,W)^{\cK}$ is therefore given by the infinite complex
 \[
 	\begin{tikzcd}[ column sep=1cm, row sep=.5cm]
 		&& \ve{X}_0|\xs^0_0
 		&&
 		\\[.5cm]
 		\cdots
 		 \ve{X}_0|\xs_{-2}
 		\ar[dd,"1+U^2"]
 		&
 		\ve{X}_0|\xs_{-1}
 		\ar[dd, "1+U"]
 		&
 		\ve{X}_0|\xs_0^1
 		\ar[u, "W"]
 		\ar[d, "Z"]
 		&
 		\ve{X}_0|\xs_1
 		\ar[dd, "1+U"]
 		&
 		\ve{X}_0|\xs_2
 		\ar[dd, "1+U^2"]
 		\cdots
 		\\[.5cm]
 		&&
 		\ve{X}_0|\xs^2_0
 		&&
 		\\
 		\cdots
 		\ve{X}_1|T^{-2}\xs'
 		& \ve{X}_1|T^{-1}\xs'
 		&\ve{X}_1|T^{0}\xs'
 		&\ve{X}_1|T^{1}\xs'
 		&\ve{X}_1|T^{2}\xs'
 		\cdots 
 	\end{tikzcd}
 \]
Idempotent $1$ of the tensor product is given by
\[
\begin{tikzcd}[ column sep = .8cm, row sep=.5cm]
		\cdots \ve{X}_0|\ys_{-2}
		\ar[d,"1+U^2"]
	& \ve{X}_0|\ys_{-1}
	\ar[d,"1+U"]
	&\ve{X}_0|\ys_{0}
	&\ve{X}_0|\ys_{1}
	\ar[d,"1+U"]
		&\ve{X}_0|\ys_{2}
	\ar[d,"1+U^2"]\cdots 
	\\[1.5cm]
	\cdots \ve{X}_1|T^{-2}\ve{e}
	& \ve{X}_1|T^{-1}\ve{e}
	&\ve{X}_1|T^{0}\ve{e}
	&\ve{X}_1|T^{1}\ve{e}
	& \ve{X}_1|T^{2}\ve{e}
	\cdots 
\end{tikzcd}
\]
We note that in both idempotents $(0,0)$ and $(1,1)$ of $\cK$, the element $1+U^n$ is a unit if $n>0$ since we complete $\cK$ with respect to the $U$-adic topology (see \cite{ZemExact}*{Section~4.2} for more on completions on $\cK$). In particular, after homotopy equivalence, we may remove all of the generators except
\[
\ve{X}_0|\xs_0^0,\quad \ve{X}_0|\xs_0^1, \quad\ve{X}_0|\xs_0^2,\quad \ve{X}_1|T^0 \xs',\quad \ve{X}_0|\ve{y}_0, \quad \text{and} \quad \ve{X}_1|T^0 \ve{e}.
\] 
Furthermore, using the description of $\Phi^{K_2}$ and $\Phi^{-K_2}$ above, we compute easily that the resulting model for $\cX_{0}(S^1\times S^2, W)^{\cK}$ takes the following form:
\[
	\cX_0( S^1\times S^2,W)^{\cK}\,\,\,\,\simeq \quad 
	\begin{tikzcd}[ row sep = 1.2cm]
		& \ve{X}_0|\xs^1_0
			 \arrow[dl,swap, "W"] 
			 \arrow[dr, "Z"]&&[-1cm] &[-.7cm] \\
		\ve{X}_0|\xs^0_0
			 \arrow[dr,swap, "UT^{-1}\sigma+T^{-1}\tau"]& &  \ve{X}_0|\xs^2_0 \arrow[dl,"T\sigma + UT\tau"]  & |[yshift=-30pt]| \oplus&  \ve{X}_1|  T^0\xs' \arrow[d, "\sigma+\tau"]\\[-0.7cm]
		& \ve{X}_0| \ys_0 & & & \ve{X}_1| T^0\ve{e}
	\end{tikzcd}
\]
\begin{rem} Note that $\cX_0(S^1\times S^2,W)^{\cK}$ is isomorphic to $\cX_0(T_{2,3})^{\cK}\oplus \cX_0(U)^{\cK}$.
\end{rem}

\bibliographystyle{custom}
\def\MR#1{}
\bibliography{biblio}

\end{document}